\documentclass{eect}
\usepackage{amsmath}
  \usepackage{paralist}
  \usepackage{graphics} 
  \usepackage{epsfig} 
\usepackage{graphicx}  \usepackage{epstopdf}
 \usepackage[colorlinks=true]{hyperref}
\hypersetup{urlcolor=blue, citecolor=red}

  \textheight=8.2 true in
   \textwidth=5.0 true in
    \topmargin 30pt
     \setcounter{page}{1}



\newtheorem{theorem}{Theorem}[section]

\newtheorem{lemma}[theorem]{Lemma}
\newtheorem{proposition}{Proposition}

\theoremstyle{definition}
\newtheorem{definition}[theorem]{Definition}
\newtheorem{remark}{Remark}
\newtheorem*{notation}{Notation}

\title[Logarithmic NLS equation with a $\delta^{\prime}$-interaction] 
      {Stability of  ground states  for logarithmic Schr\"{o}dinger equation with a $\delta^{\prime}$-interaction}

\author[Alex H. Ardila]{}

\subjclass{35Q51, 35Q55,  37K40,  34B37}
\keywords{Logarithmic Schr\"{o}dinger equation; $\delta^{\prime}$-interaction; bifurcation; stability; ground states}

 \email{alexha@ime.usp.br}



\begin{document}
\maketitle

\centerline{\scshape Alex H. Ardila}
\medskip
{\footnotesize
 \centerline{Department of Mathematics, IME-USP, Cidade Universit\'aria}
  \centerline{CEP 05508-090, S\~ao Paulo, SP, Brazil}
} 

\medskip


\bigskip

 \centerline{(Communicated by Thierry Cazenave)  }

\begin{abstract}
In this paper we study the one-dimensional logarithmic Schr\"{o}din- \break ger equation  perturbed by an attractive $\delta^{\prime}$-interaction
\[
i\partial_{t}u+\partial^{2}_{x}u+ \gamma\delta^{\prime}(x)u+u\, \mbox{Log}\left|u\right|^{2}=0, \quad (x,t)\in\mathbb{R}\times\mathbb{R},
\]
where  $\gamma>0$. We establish the existence and uniqueness of the solutions of the associated Cauchy problem in a suitable functional framework.  In the attractive $\delta^{\prime}$-interaction case, the set of the ground state is completely determined. More precisely: if
$0<\gamma\leq 2$, then there is a single ground state and it is an odd function; if $\gamma>2$, then there exist two non-symmetric ground states. Finally, we show that the ground states are orbitally stable via a variational approach.
\end{abstract}

\section{Introduction}
In this paper we consider the following nonlinear Schr\"{o}dinger equation with a delta prime potential:
\begin{equation}
\label{00NL}
 i\partial_{t}u+\partial^{2}_{x}u+ \gamma\delta^{\prime}(x)u+u\, \mbox{Log}\left|u\right|^{2}=0,
\end{equation}
where  $u=u(x,t)$ is a complex-valued function of $(x,t)\in \mathbb{R}\times\mathbb{R}$. The parameter $\gamma$ is real; when positive, the potential is called attractive, otherwise repulsive. The formal expression $-\partial^{2}_{x}-\gamma\delta^{\prime}(x)$ which appear in \eqref{00NL} admit a precise interpretation as self-adjoint  operator ${H}_{\gamma}$ on $L^{2}(\mathbb{R^{}}^{})$ associated with the quadratic form $\mathfrak{t}_{\gamma}$ (see \cite{ADN1, AKAM}),
\[
\mathfrak{t_{\gamma}}[u,v]=\Re\int_{\mathbb{R}}u^{\prime}(x)\overline{v^{\prime}(x)} dx-\gamma^{-1}\Re \left[\left(u(0+)-u(0-)\right) \overline{\left(v(0+)-v(0-)\right)}\right],
\]
defined on the domain $ \mathrm{dom}(\mathfrak{t_{\gamma}})=H^{1}(\mathbb{R}\setminus\left\{0\right\})$. To be more specific, it is clear that this form is bounded from below and closed on $H^{1}(\mathbb{R}\setminus\left\{0\right\})$.  Then the self-adjoint operator on $L^{2}(\mathbb{R^{}}^{})$ associated with $\mathfrak{t}_{\gamma}$ is  given by  (see \cite{AKAM})
\begin{equation*}
 H_{\gamma}u=-\frac{d^{2}u}{dx^{2}}\quad \text{ for $x\neq0$,}
\end{equation*}
 on the domain
\begin{equation*}
\mathrm{dom}(H_{\gamma})=\left\{u\in  H^{2}(\mathbb{R}\verb'\'  \left\{0 \right\}) :u^{\prime}(0+)=u^{\prime}(0-), u^{}(0+)-u^{}(0-)=-\gamma u^{\prime}(0)\right\}.
\end{equation*}
We note that  $H_{\gamma}$ can also be defined via theory of self-adjoint extensions of symmetric operators (see \cite{APIN}). Moreover, the following spectral properties of  ${H}_{\gamma}$ are known: $\sigma_{\rm ess}({H}_{\gamma})=[0,\infty)$; if $\gamma\leq 0$, then $\sigma_{\rm p}({H}_{\gamma})=\emptyset$; if $\gamma> 0$, then $\sigma_{\rm p}({H}_{\gamma})=\left\{-4/\gamma^{2}\right\}$.

In the absence of the point interaction, this equation is applied in many branches of physics, e.g., quantum optics, nuclear physics, fluid dynamics, geophysics, plasma physics and  Bose-Einstein condensation (see, e.g. \cite{HE123, APLES} and references therein).  The study of stability and instability for one-dimensional singularly perturbed nonlinear Schr\"{o}dinger equations started only a few years ago and is currently  regarded as one of the most interesting research topics in modern nonlinear wave theory (see, e.g. \cite{ADN1, NANAA, ADNP, AESM,AV, AnguloArdila2016, RJJ, FO, KEO, LFF} and references therein). In particular, the nonlinear Schr\"{o}dinger equation with a power nonlinearity $\left|u\right|^{p-1}u$ and a $\delta^{\prime}$-interaction have been studied extensively.  Among such works, let us mention \cite{ADN1, NANAA, ADNP, AESM,AV}.

The nonlinear Schr\"{o}dinger equation \eqref{00NL} is formally associated with the energy functional $E$ define by
\begin{equation*}
E(u)=\frac{1}{2}\mathfrak{t_{\gamma}}[u]-\frac{1}{2}\int_{\mathbb{R}}\left|u\right|^{2}\mbox{Log}\left|u\right|^{2}dx.
\end{equation*}
Unfortunately, due to the singularity of the logarithm at the origin, the functional fails to be finite as well of class $C^{1}$ on $ \mathrm{dom}(\mathfrak{t_{\gamma}})=H^{1}(\mathbb{R}\setminus\left\{0\right\})$. Due  to  this  loss  of  smoothness,  it is convenient  to work in a suitable Banach space endowed with a Luxemburg type norm in order to make functional $E$  well defined and $C^{1}$ smooth.

Indeed, we consider the reflexive Banach space (see Section \ref{S:-1} below)
\begin{equation}\label{ASE}
{W}(\dot{\mathbb{R}})=\bigl\{u\in H^{1}(\mathbb{R}\setminus\left\{0\right\}):\left|u\right|^{2}\mathrm{Log}\left|u\right|^{2}\in L^{1}(\mathbb{R})\bigr\}.
\end{equation}
Then, by Proposition \ref{DFFE} in Section \ref{S:-1} we have that the energy functional $E$ is well-defined and of class $C^1$ on $W(\dot{\mathbb R})$. Moreover, from Lemma \ref{APEX23}, we have that the operator
\begin{equation*}
\begin{cases}
{W}^{}(\dot{\mathbb{R}})\rightarrow {W}^{\prime}(\dot{\mathbb{R}})\\
u\rightarrow {H}_{\gamma}u-u\, \mbox{Log}\left|u\right|^{2}
\end{cases}
\end{equation*}
is continuous and bounded. Here, ${W}^{\prime}(\dot{{\mathbb{R}}})$ is the dual space of ${W}^{}(\dot{{\mathbb{R}}})$. Therefore,  if $u\in C(\mathbb{R}, {W}(\dot{{\mathbb{R}^{}}}))\cap C^{1}(\mathbb{R}, {W}^{\prime}(\dot{{\mathbb{R}}}))$, then equation \eqref{00NL} makes sense in ${W}^{\prime}(\dot{{\mathbb{R}}})$.

The following proposition is concerned with the well-posedness of the Cauchy problem for \eqref{00NL} in the energy space ${W}(\dot{\mathbb{R}})$.
\begin{proposition} \label{PCS}
For any $u_{0}\in {W}(\dot{\mathbb{R}})$, there is a unique maximal solution  $u\in C(\mathbb{R},{W}(\dot{\mathbb{R}^{}}))\cap C^{1}(\mathbb{R}, {W}^{\prime}(\dot{\mathbb{R}^{}}) )$  of \eqref{00NL}  such that $u(0)=u_{0}$ and $\sup_{t\in \mathbb{R}}\left\|u(t)\right\|_{{W}(\dot{\mathbb{R}})}<\infty$. Furthermore, the conservation of energy and charge hold; that is,
\begin{equation*}
E(u(t))=E(u_{0})\quad  and \quad \left\|u(t)\right\|^{2}_{L^{2}}=\left\|u_{0}\right\|^{2}_{L^{2}}\quad  \text{for all $t\in \mathbb{R}$}.
\end{equation*}
\end{proposition}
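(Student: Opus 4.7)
The plan is to adapt the Cazenave--Haraux regularization strategy for logarithmic Schr\"odinger equations (see Cazenave's monograph on semilinear Schr\"odinger equations) to the $\delta^{\prime}$-interaction setting. The key structural fact I would exploit is that $H_{\gamma}$ is self-adjoint on $L^{2}(\mathbb{R})$ and hence generates a unitary group $\{e^{-itH_{\gamma}}\}_{t\in\mathbb{R}}$, which serves as the underlying propagator. Because the map $u\mapsto u\,\mathrm{Log}|u|^{2}$ is not locally Lipschitz near the origin, I cannot use a direct Picard iteration; regularization is forced upon us.

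First I would introduce a truncation of the nonlinearity, for instance $F_{\varepsilon}(u)=u\,\mathrm{Log}(\varepsilon+|u|^{2})/(1+\varepsilon|\mathrm{Log}(\varepsilon+|u|^{2})|)$, chosen so that $F_{\varepsilon}:L^{2}(\mathbb{R})\to L^{2}(\mathbb{R})$ is globally Lipschitz and $F_{\varepsilon}(u)\to u\,\mathrm{Log}|u|^{2}$ pointwise with appropriate monotone bounds. Standard semigroup theory then gives a unique global solution $u_{\varepsilon}\in C(\mathbb{R},L^{2}(\mathbb{R}))$ of the Duhamel formula associated with $H_{\gamma}$ and $F_{\varepsilon}$. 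A regularized energy $E_{\varepsilon}$ (with the corresponding antiderivative of $F_{\varepsilon}$ replacing the logarithmic density) and the $L^{2}$-charge are conserved along $u_{\varepsilon}$; combining these with the elementary bound $s\,\mathrm{Log}\,s\geq-1/e$ for $s\geq 0$, I obtain uniform control of $u_{\varepsilon}$ in $W(\dot{\mathbb{R}})$, since the quadratic form $\mathfrak{t}_{\gamma}$ dominates the $H^{1}(\mathbb{R}\setminus\{0\})$ norm modulo $L^{2}$ and the positive part of the logarithmic density controls the Luxemburg-type component of the $W(\dot{\mathbb{R}})$ norm.

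Then I would extract a weak-$\ast$ limit $u_{\varepsilon}\rightharpoonup u$ in $L^{\infty}(\mathbb{R},W(\dot{\mathbb{R}}))$, pass to the limit in the equation using Lemma \ref{APEX23} to handle the nonlinear term, and obtain a solution $u\in L^{\infty}(\mathbb{R},W(\dot{\mathbb{R}}))\cap C^{1}(\mathbb{R},W^{\prime}(\dot{\mathbb{R}}))$. Upgrading weak to strong continuity in $W(\dot{\mathbb{R}})$ is then standard: weak continuity plus conservation of energy and of the $L^{2}$ norm forces norm continuity, hence strong continuity, in the reflexive space $W(\dot{\mathbb{R}})$.

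The main obstacle is uniqueness, and for this I would rely on the fundamental inequality
\[
\bigl|\operatorname{Im}\bigl[\bigl(z_{1}\mathrm{Log}|z_{1}|^{2}-z_{2}\mathrm{Log}|z_{2}|^{2}\bigr)\overline{(z_{1}-z_{2})}\bigr]\bigr|\leq 4\,|z_{1}-z_{2}|^{2},\qquad z_{1},z_{2}\in\mathbb{C},
\]
which is the decisive analytic property of the logarithmic nonlinearity. Given two solutions $u,v$ with $u(0)=v(0)$, the difference $w=u-v$ satisfies $i\partial_{t}w=H_{\gamma}w-(F(u)-F(v))$ in $W^{\prime}(\dot{\mathbb{R}})$. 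Pairing with $w$ in the $W\times W^{\prime}$ duality and taking the imaginary part, the term $\langle H_{\gamma}w,w\rangle_{W^{\prime},W}$ is real by self-adjointness and the logarithmic inequality yields $\frac{d}{dt}\|w(t)\|_{L^{2}}^{2}\leq C\|w(t)\|_{L^{2}}^{2}$. Gronwall's lemma then forces $w\equiv 0$. Conservation of charge and energy pass from the approximations to the limit by weak lower semicontinuity together with the identities obtained before passage to the limit, and the uniform a priori bound $\sup_{t\in\mathbb{R}}\|u(t)\|_{W(\dot{\mathbb{R}})}<\infty$ is immediate from the energy estimates used in the compactness step.
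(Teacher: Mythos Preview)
Your proposal is correct and follows essentially the same route as the paper: regularize the logarithmic nonlinearity, solve the approximate problems globally using the self-adjointness of $H_{\gamma}$, obtain uniform $W(\dot{\mathbb{R}})$ bounds from the conserved regularized energy and charge, extract a weak limit by compactness, pass to the limit in the weak formulation, prove uniqueness via the inequality $|\operatorname{Im}[(z_{1}\mathrm{Log}|z_{1}|^{2}-z_{2}\mathrm{Log}|z_{2}|^{2})\overline{(z_{1}-z_{2})}]|\leq 4|z_{1}-z_{2}|^{2}$ and Gronwall, and finally upgrade to strong continuity via the conservation laws. The only cosmetic difference is the choice of regularization: the paper truncates the two pieces $a$ and $b$ of the decomposition $z\,\mathrm{Log}|z|^{2}=b(z)-a(z)$ separately (which meshes naturally with the Orlicz structure of $W(\dot{\mathbb{R}})$), whereas you propose a single cutoff $F_{\varepsilon}$; either works, and the paper also invokes Cazenave's abstract well-posedness theorem to get the approximants directly in $C(\mathbb{R},\Sigma)$ rather than only in $C(\mathbb{R},L^{2})$, a point you should make explicit since the energy bound requires $\Sigma$-regularity of $u_{\varepsilon}$.
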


In this paper, we consider the case $\gamma>0$, and study the variational structure and  orbital stability of the standing wave solution of  \eqref{00NL} of the form  $u(x,t)=e^{i\omega t}\varphi(x)$ where $\omega\in \mathbb{R}$ and $\varphi$ is a real valued function which has to solve the following stationary  problem
\begin{equation}\label{GS}
\begin{cases}
{H}_{\gamma}\varphi+\omega \varphi-\varphi\, \mathrm{Log}\left|\varphi \right|^{2}=0 \quad \mbox{in} \quad {W}^{\prime}(\dot{{\mathbb{R}}}),\\
\varphi\in W(\dot{\mathbb{R}})\setminus\{0\}.
\end{cases}
\end{equation}

More precisely, the main aim of this paper is to analyse the existence and stability of  ground states for one-dimensional logarithmic Schr\"{o}dinger equation \eqref{00NL}. It is important to note that our work was inspired by the recent interesting results of R. Adami and D. Noja \cite{ADNP}. Indeed, the techniques used here are similar in many respects to those used in \cite{ADNP}.

\begin{definition}
For $\gamma>0$ and $\omega\in \mathbb{R}$, we define the following functionals of class $C^{1}$ on $W(\dot{\mathbb{R}})$:
\begin{align*}
 S_{\omega,\gamma}(u)&=\frac{1}{2}\mathfrak{t_{\gamma}}[u]+ \frac{\omega+1}{2}\|u \|^{2}_{L^{2}}-\frac{1}{2}\int_{\mathbb{R}}\left|u\right|^{2}\mathrm{Log}\left|u\right|^{2}dx,\\
 I_{\omega,\gamma}(u)&=\mathfrak{t_{\gamma}}[u]+\omega\,\|u \|^{2}_{L^{2}}-\int_{\mathbb{R}}\left|u\right|^{2}\mathrm{Log}\left|u\right|^{2}dx.
\end{align*}
\end{definition}
Note that \eqref{GS} is equivalent to $S^{\prime}_{\omega,\gamma}(\varphi)=0$, and $I_{\omega,\gamma}(u)=\left\langle S_{\omega,\gamma}^{\prime}(u),u\right\rangle$ is the so-called Nehari functional.

From the physical point viewpoint, an important role is played by the ground state solution of \eqref{GS}. We recall that a solution $\varphi\in{W}(\dot{\mathbb{R}_{}})$ of \eqref{GS} is termed as a ground state if it has some minimal action among all solutions of \eqref{GS}. To be more specific,  we consider the minimization problem
\begin{align}
\begin{split}\label{MPE}
d_{\gamma}(\omega)&={\inf}\left\{S_{\omega,\gamma}(u):\, u\in W(\dot{\mathbb{R}^{}})  \setminus  \left\{0 \right\},  I_{\omega,\gamma}(u)=0\right\} \\
&=\frac{1}{2}\,{\inf}\left\{\left\|u\right\|_{L^{2}}^{2}:u\in  W(\dot{\mathbb{R}^{}}) \setminus \left\{0 \right\},  I_{\omega,\gamma}(u)= 0 \right\},
\end{split}
\end{align}
and define the set of ground states  by
\begin{equation*}
 \mathcal{N}_{\omega,\gamma}=\bigl\{ \varphi\in W(\dot{\mathbb{R}^{}})  \setminus  \left\{0 \right\}: S_{\omega,\gamma}(\varphi)=d_{\gamma}(\omega), \,\, I_{\omega,\gamma}(\varphi)=0\bigl\}.
\end{equation*}
The set $\bigl\{u\in W(\dot{\mathbb{R}^{}}) \setminus  \left\{0 \right\},  I_{\omega,\gamma}(u)=0\bigl\}$ is called the Nehari manifold. Notice that  the above set contains all stationary point of $S_{\omega,\gamma}$.

For $\gamma>0$, the existence of minimizers for \eqref{MPE} is obtained through variational argument. We will show the following theorem in Section \ref{S:2}.
\begin{theorem} \label{ESSW}
Let  $\gamma>0$. There exists a minimizer of $d_{\gamma}(\omega)$ for any $\omega \in \mathbb{R}$; that is, the set of ground states $\mathcal{N}_{\omega,\gamma}$ is not empty.
\end{theorem}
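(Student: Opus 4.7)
The plan is to apply the direct method to a minimizing sequence, exploiting the identity $S_{\omega,\gamma}(u) - \tfrac12 I_{\omega,\gamma}(u) = \tfrac12 \|u\|_{L^2}^2$ and the fact that, for the logarithmic nonlinearity, the scalar scaling $u\mapsto\lambda u$ admits an explicit projection onto the Nehari manifold:
$$
I_{\omega,\gamma}(\lambda u) = 0 \iff \lambda^2 = \exp\!\Big(\frac{I_{\omega,\gamma}(u)}{\|u\|_{L^2}^2}\Big).
$$
First I would verify $d_\gamma(\omega)>0$ using the elementary bounds $t^2\log t^2 \leq \epsilon t^2 + C_\epsilon t^{2+\delta}$ (for $t\geq 0$), the 1D embedding $H^1\hookrightarrow L^\infty$, and the trace estimate $|u(0\pm)|^2 \leq \epsilon \|u'\|_{L^2(\mathbb R\setminus\{0\})}^2 + C_\epsilon \|u\|_{L^2}^2$; together these force $\|u\|_{L^2}$ to stay away from $0$ whenever $I_{\omega,\gamma}(u)=0$.

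Pick a minimizing sequence $\{u_n\}$. Then $\|u_n\|_{L^2}^2 \to 2d_\gamma(\omega)$, and the same estimates applied to $I_{\omega,\gamma}(u_n)=0$ bound $\|u_n'\|_{L^2(\mathbb R\setminus\{0\})}$; hence $\{u_n\}$ is bounded in $W(\dot{\mathbb R})$, and along a subsequence $u_n\rightharpoonup\varphi$ in $W(\dot{\mathbb R})$, with $u_n\to\varphi$ in $L^p_{\mathrm{loc}}(\mathbb R)$ for every $p\in[2,\infty)$ and $u_n(0\pm)\to\varphi(0\pm)$.

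The hard part is showing $\varphi\neq 0$. I would compare with the "free" infimum $d_0(\omega)$, defined by the same variational problem but with the form $\mathfrak t_0[u]=\|u'\|_{L^2}^2$; the latter is attained by the Gausson $\varphi_\omega(x)=e^{(\omega+1)/2}e^{-x^2/2}$, and I would prove the strict sub-additivity
$$
d_\gamma(\omega) < d_0(\omega).
$$
Indeed, the odd reflection $\psi(x)=\mathrm{sgn}(x)\,\varphi_\omega(x)$ satisfies $I_{\omega,0}(\psi)=0$ and, since the attractive $\delta'$-term contributes $-\gamma^{-1}(2\varphi_\omega(0))^2$, one gets $I_{\omega,\gamma}(\psi)<0$; the scaling formula then produces $\lambda<1$ with $\lambda\psi$ on the Nehari manifold, giving $d_\gamma(\omega)\leq \tfrac12\|\lambda\psi\|_{L^2}^2 < \tfrac12\|\psi\|_{L^2}^2 = d_0(\omega)$. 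If $\varphi=0$, the compact embedding yields $u_n(0\pm)\to 0$, so the $\delta'$-correction to $I$ vanishes in the limit and $I_{\omega,0}(u_n)\to 0$; projecting each $u_n$ onto the $\gamma=0$ Nehari manifold (with factor $\to 1$) would force $d_0(\omega)\leq \liminf \tfrac12\|u_n\|_{L^2}^2 = d_\gamma(\omega)$, contradicting the strict inequality.

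With $\varphi\neq 0$ established, weak lower semicontinuity of $\mathfrak t_\gamma$ and $\|\cdot\|_{L^2}^2$, together with Fatou (or a Br\'ezis--Lieb decomposition) for the logarithmic term, split into its positive and negative parts via the $L^\infty$ and $L^{2+\delta}$ control, give $I_{\omega,\gamma}(\varphi)\leq 0$ and $\|\varphi\|_{L^2}^2\leq 2d_\gamma(\omega)$. If $I_{\omega,\gamma}(\varphi)<0$, one more application of the explicit scaling produces $\lambda<1$ with $\lambda\varphi$ on the Nehari manifold and strictly smaller $L^2$-norm, contradicting the definition of $d_\gamma(\omega)$. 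Hence $I_{\omega,\gamma}(\varphi)=0$ and $S_{\omega,\gamma}(\varphi)=\tfrac12\|\varphi\|_{L^2}^2=d_\gamma(\omega)$, so $\varphi\in\mathcal N_{\omega,\gamma}$.
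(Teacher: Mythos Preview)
Your overall strategy---boundedness of minimizing sequences, a strict comparison with the ``free'' problem to rule out $\varphi=0$, and a final rescaling to exclude $I_{\omega,\gamma}(\varphi)<0$---is exactly that of the paper. But the comparison step, as written, has a genuine gap. The free problem you must compare with is posed on $W(\dot{\mathbb R})$, because the $u_n$ may jump at $0$ and therefore cannot be projected onto an $H^{1}(\mathbb R)$ Nehari manifold. On $W(\dot{\mathbb R})$ the free infimum is \emph{not} attained by the full Gausson $\phi_\omega$: the half-Gausson $\chi_{+}\phi_\omega$ (or $\chi_{-}\phi_\omega$) is admissible, satisfies $I_{\omega}(\chi_{+}\phi_\omega)=0$, and has $\tfrac12\|\chi_{+}\phi_\omega\|_{L^2}^2=\tfrac14\|\phi_\omega\|_{L^2}^2$; this is the content of Lemma~\ref{L5}, so $d^{0}(\omega)=\tfrac14\|\phi_\omega\|_{L^2}^2$. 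Your odd test function $\psi(x)=\mathrm{sgn}(x)\,\phi_\omega(x)$ has $\tfrac12\|\psi\|_{L^2}^2=\tfrac12\|\phi_\omega\|_{L^2}^2=2d^{0}(\omega)$, so after rescaling you only obtain $d_\gamma(\omega)<2d^{0}(\omega)$. When $\varphi=0$, the projection of $u_n$ onto the free $W(\dot{\mathbb R})$-Nehari manifold yields $d^{0}(\omega)\le d_\gamma(\omega)$; these two inequalities are perfectly compatible and no contradiction follows.

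The fix, as in Lemma~\ref{L34}, is to take $\chi_{+}\phi_\omega$ itself as trial function: it already sits on the free Nehari manifold, one computes $I_{\omega,\gamma}(\chi_{+}\phi_\omega)=-\gamma^{-1}|\phi_\omega(0)|^{2}<0$, and the rescaling then gives the needed strict inequality $d_\gamma(\omega)<\tfrac12\|\chi_{+}\phi_\omega\|_{L^2}^2=d^{0}(\omega)$. A smaller point: the bare claim ``$I_{\omega,\gamma}(\varphi)\le 0$ by weak lower semicontinuity'' is not clean, since for $\omega<0$ the $L^{2}$ term has the wrong sign and the logarithmic term has both signs. Your parenthetical alternative---a Br\'ezis--Lieb decomposition---is what actually works (and is what the paper does): one shows $I_{\omega,\gamma}(u_n-\varphi)\to -I_{\omega,\gamma}(\varphi)$ and then excludes $I_{\omega,\gamma}(\varphi)>0$ by rescaling $u_n-\varphi$.
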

\begin{remark}\label{RM}
Let $u\in \mathcal{N}_{\omega,\gamma}$.  Then, there exists a Lagrange multiplier  $\Lambda\in \mathbb{R}$ such that $S^{\prime}_{\omega,\gamma}( u)=\Lambda I^{\prime}_{\omega,\gamma}(u)$. Thus, we have $\left\langle S^{\prime}_{\omega,\gamma}( u),u\right\rangle=\Lambda\left\langle  I^{\prime}_{\omega,\gamma}(u),u\right\rangle$. The fact that   $\left\langle S^{\prime}_{\omega,\gamma}( u),u\right\rangle =I_{\omega,\gamma}(u)=0$ and $\left\langle  I^{\prime}_{\omega,\gamma}(u),u\right\rangle= -2\left\|u_{}\right\|_{L^{2}}^{2}<0$, implies $\Lambda=0$; that is, $S_{\omega,\gamma}^{\prime}(u)=0$. Therefore,  $u$ is a solution of the stationary problem \eqref{GS}.
\end{remark}

Before proceeding to our main results, we state the following  proposition.
\begin{proposition} \label{11}
Let $\gamma>0$ and  $\omega\in \mathbb{R}$. Any function that belongs to the set of ground states $\mathcal{N}_{\omega,\gamma}$ has the form $e^{i\theta}\phi^{t_{1},t_{2}}_{\omega}$, where $\theta\in\mathbb{R}$,
\begin{equation}\label{Qo}
\phi^{t_{1},t_{2}}_{\omega}(x):=
\begin{cases}
e^{\frac{\omega+1}{2}}e^{-\frac{1}{2}(x+t_{1})^{2}}, &\text{if $x>0$;}\\
-e^{\frac{\omega+1}{2}}e^{-\frac{1}{2}(x-t_{2})^{2}}, &\text{if $ x<0$;}
\end{cases}
\end{equation}
and the couple $(t_{1},t_{2})\in \mathbb{R}^{+}\times\mathbb{R}^{+}$ solves the system
\begin{gather}\label{3S}
\bigg\{
\begin{aligned}
t_{1}e^{-\frac{1}{2}t^{2}_{1}}&=t_{2}e^{-\frac{1}{2}t^{2}_{2}}\\
t^{-1}_{1}+t^{-1}_{2}&=\gamma.
\end{aligned}
\end{gather}
\end{proposition}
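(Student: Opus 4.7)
The plan is to combine the Euler--Lagrange equation from Remark~\ref{RM} with a direct ODE analysis on each half-line and a sign-pattern argument that exploits the attractive nature of the $\delta'$-interaction. First, let $u\in\mathcal{N}_{\omega,\gamma}$. By Remark~\ref{RM}, $u$ satisfies $S'_{\omega,\gamma}(u)=0$, and from the description of $\mathrm{dom}(H_\gamma)$ this means $u\in H^{2}(\mathbb{R}\setminus\{0\})$, $-u''+\omega u-u\,\mathrm{Log}|u|^{2}=0$ on each half-line, and the jump conditions $u'(0+)=u'(0-)$, $u(0+)-u(0-)=-\gamma u'(0)$ hold. Writing $u=f+ig$, multiplying the two real equations by $g$ and $f$ respectively and subtracting yields $(fg'-f'g)'=0$ on each half-line; by decay at $\pm\infty$ the Wronskian is identically zero, so $u$ is, on each half-line, a real function times a constant phase. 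Plugging into the jump conditions, these phases must differ by $0$ or $\pi$, so up to a global factor $e^{i\theta}$ and absorption of a sign into the real part on one side, $u=e^{i\theta}\varphi$ with $\varphi$ real.

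Next I would solve the scalar ODE $-\varphi''+\omega\varphi-\varphi\log\varphi^{2}=0$ on each half-line. Multiplying by $\varphi'$ and integrating produces the first integral $(\varphi')^{2}=(\omega+1)\varphi^{2}-\varphi^{2}\log\varphi^{2}$, the integration constant vanishing by decay at infinity. The substitution $\varphi=\epsilon\,e^{(\omega+1)/2}e^{-y^{2}/2}$ reduces this to $(y')^{2}=1$, so every nontrivial decaying solution has the form $\varphi(x)=\epsilon_{\pm}e^{(\omega+1)/2}e^{-(x-c_{\pm})^{2}/2}$ on $\pm x>0$ for signs $\epsilon_{\pm}\in\{\pm1\}$ and centres $c_{\pm}\in\mathbb{R}$.

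The hardest step is to fix the sign pattern $\epsilon_{+}\epsilon_{-}=-1$. Here the key observation is that from~\eqref{MPE} we have $S_{\omega,\gamma}(u)=\tfrac12\|u\|_{L^{2}}^{2}$ on the Nehari manifold, so the ground state is characterised as the point of minimum $L^{2}$-norm on the Nehari set. Assume for contradiction that $\epsilon_{+}\epsilon_{-}=+1$, and define $\tilde\varphi$ by flipping the sign of $\varphi$ on $(-\infty,0)$. Then $\|\tilde\varphi\|_{L^{2}}=\|\varphi\|_{L^{2}}$, $\int|\tilde\varphi|^{2}\log|\tilde\varphi|^{2}=\int|\varphi|^{2}\log|\varphi|^{2}$, and $\int|\tilde\varphi'|^{2}=\int|\varphi'|^{2}$, while the jump satisfies $|\tilde\varphi(0+)-\tilde\varphi(0-)|^{2}=|\varphi(0+)+\varphi(0-)|^{2}>|\varphi(0+)-\varphi(0-)|^{2}$ because $\varphi(0+)\varphi(0-)>0$. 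Since $\gamma>0$, this gives $\mathfrak{t}_{\gamma}[\tilde\varphi]<\mathfrak{t}_{\gamma}[\varphi]$ and hence $I_{\omega,\gamma}(\tilde\varphi)<0$. The scaling identity $I_{\omega,\gamma}(\lambda v)=\lambda^{2}I_{\omega,\gamma}(v)-\lambda^{2}(\log\lambda^{2})\|v\|_{L^{2}}^{2}$ then provides a unique $\lambda\in(0,1)$ with $I_{\omega,\gamma}(\lambda\tilde\varphi)=0$, and then $\|\lambda\tilde\varphi\|_{L^{2}}^{2}<\|\varphi\|_{L^{2}}^{2}$, contradicting $u\in\mathcal{N}_{\omega,\gamma}$. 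Hence $\epsilon_{+}\epsilon_{-}=-1$, and absorbing a global sign into $\theta$ we may take $\epsilon_{+}=1$, $\epsilon_{-}=-1$.

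Finally, I would substitute this sign pattern into the jump conditions. The jump-of-value condition becomes $e^{-c_{+}^{2}/2}+e^{-c_{-}^{2}/2}=-\gamma c_{+}e^{-c_{+}^{2}/2}$, whose positive left-hand side forces $c_{+}<0$, and the matching of derivatives then forces $c_{-}>0$. Setting $t_{1}=-c_{+}>0$ and $t_{2}=c_{-}>0$ recovers the explicit form~\eqref{Qo}, and the two jump conditions become $t_{1}e^{-t_{1}^{2}/2}=t_{2}e^{-t_{2}^{2}/2}$ together with (after dividing the second by $t_{1}e^{-t_{1}^{2}/2}$ and using the first) $t_{1}^{-1}+t_{2}^{-1}=\gamma$, which is exactly system~\eqref{3S}.
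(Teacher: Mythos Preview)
Your argument follows the same route as the paper: invoke Remark~\ref{RM} to get the Euler--Lagrange equation, reduce to a real profile with constant phase on each half-line, solve the autonomous ODE there to obtain translated Gaussons, use minimality to force opposite signs across the origin, and finally read off system~\eqref{3S} from the $\delta'$ jump conditions.

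Two comments. First, the step ``from the description of $\mathrm{dom}(H_\gamma)$ this means $u\in H^{2}(\mathbb{R}\setminus\{0\})$ and the jump conditions hold'' is not immediate: $S'_{\omega,\gamma}(u)=0$ is an identity in $W'(\dot{\mathbb{R}})$, and concluding $u\in\mathrm{dom}(H_\gamma)$ requires $u\,\mathrm{Log}|u|^{2}\in L^{2}(\mathbb{R})$, which is not automatic for $u\in W(\dot{\mathbb{R}})$. The paper handles this in Lemma~\ref{LCU} by first bootstrapping to $C^{2}(\mathbb{R}\setminus\{0\})$, then identifying the Gausson form on each half-line via Lemma~\ref{L8}, and only afterwards invoking the representation theorem for forms to land in $\mathrm{dom}(H_\gamma)$; you should reorder accordingly. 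Second, your justification of $\epsilon_{+}\epsilon_{-}=-1$ (flip the sign on one side, obtain $I_{\omega,\gamma}(\tilde\varphi)<0$, then rescale by $\lambda\in(0,1)$ to return to the Nehari set with strictly smaller $L^{2}$-norm) is actually more careful than the paper's one-line remark that opposite phases minimize $\mathfrak{t}_{\gamma}$ ``while the other terms in $S_{\omega,\gamma}$ are the same'': since changing the phase also moves you off the Nehari manifold, the rescaling step you make explicit is precisely what is needed to turn that observation into a contradiction with minimality.
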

Notice that in order to identify  the ground states we must find the solutions of system \eqref{3S}; this solutions will be explicitly calculated in  Section \ref{S:3}. More precisely, for every $0<\gamma\leq 2$ the system \eqref{3S} has exactly one solution  given by $t_{1}={2}{\gamma}^{-1}$, $t_{2}= {2}{\gamma}^{-1}$. At $\gamma=2$ two new solutions arise. Indeed, for $\gamma>2$  the system \eqref{3S} has exactly three solutions and one of them is given by $t_{1}={2}{\gamma}^{-1}$, $t_{2}= {2}{\gamma}^{-1}$. See Proposition \ref{3LL} for more details.

Now we are ready to state our first main result.
\begin{theorem} \label{12}
Let  $\gamma>0$ and $\omega \in \mathbb{R}$. Then the following assertions hold.\\
 (i) If $0<\gamma\leq 2$, then  $\mathcal{N}_{\omega,\gamma}=\bigl\{e^{i\theta}\phi^{t_{\ast},t_{\ast}}_{\omega}: \theta\in\mathbb{R} \bigl\}$, where $t_{\ast}={2}{\gamma}^{-1}$.\\
(ii) If $\gamma>2$, then  $\mathcal{N}_{\omega,\gamma}=\bigl\{e^{i\theta}\phi^{t_{1},t_{2}}_{\omega},\,\,\,e^{i\theta}\phi^{t_{2},t_{1}}_{\omega} :\,\theta\in \mathbb{R} \bigl\}$, where $(t_{1},t_{2})$ is the unique couple, with $t_{1}<{2}{\gamma}^{-1}<t_{2}$, that solves the system \eqref{3S}.
\end{theorem}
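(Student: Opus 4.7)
The plan is to combine Proposition \ref{11} with the classification of solutions of \eqref{3S} recalled in the discussion before the statement (and detailed in Proposition \ref{3LL}): every element of $\mathcal{N}_{\omega,\gamma}$ is a phase multiple of some $\phi^{t_1,t_2}_\omega$ with $(t_1,t_2)$ solving \eqref{3S}, and \eqref{3S} has exactly one solution when $0<\gamma\le 2$ and three when $\gamma>2$ (the symmetric $(2/\gamma,2/\gamma)$ together with an asymmetric pair related by $t_1\leftrightarrow t_2$). Since $\mathcal{N}_{\omega,\gamma}\ne\emptyset$ by Theorem \ref{ESSW}, assertion (i) is immediate as there is only one candidate. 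For assertion (ii) there are three candidates and the issue is to decide which one(s) minimize $S_{\omega,\gamma}$.

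The first step for (ii) is to reduce the comparison of the three values of $S_{\omega,\gamma}$ to a scalar inequality. Since $S_{\omega,\gamma}=\tfrac12 I_{\omega,\gamma}+\tfrac12\|\cdot\|_{L^2}^2$, on the Nehari manifold one has $S_{\omega,\gamma}(u)=\tfrac12\|u\|_{L^2}^2$. Using the explicit profile \eqref{Qo} and translating the two Gaussian integrals, one finds
\[
\|\phi^{t_1,t_2}_\omega\|_{L^2}^2=e^{\omega+1}\bigl[F(t_1)+F(t_2)\bigr],\qquad F(t):=\int_t^{\infty}e^{-y^2}\,dy,
\]
so the task reduces to comparing $h(t_1,t_2):=F(t_1)+F(t_2)$ at the three critical points on the constraint curve $\mathcal{C}=\{t_i>0:\,1/t_1+1/t_2=\gamma\}$.

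The crux is a second-variation calculation at the symmetric point. Parametrize $\mathcal{C}$ by $t_2(t_1)=1/(\gamma-1/t_1)$ (so $t_2'(t_1)=-t_2^2/t_1^2$) and set $\tilde h(t_1):=h(t_1,t_2(t_1))$. A direct differentiation yields $\tilde h'(2/\gamma)=0$ and, after simplification,
\[
\tilde h''(2/\gamma)=\frac{4\,e^{-(2/\gamma)^2}}{2/\gamma}\Bigl((2/\gamma)^2-1\Bigr),
\]
which is strictly negative for $\gamma>2$. Hence the symmetric critical point is a strict local maximum of $\tilde h$ along $\mathcal{C}$ and cannot be the global minimum. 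Since Theorem \ref{ESSW} guarantees that a minimum is attained and, by Proposition \ref{11}, any minimizer is one of the three candidates, the minimum must be realized at one of the two asymmetric solutions; the involution $(t_1,t_2)\leftrightarrow(t_2,t_1)$ preserves both $h$ and $\mathcal{C}$, so both asymmetric profiles share the same $L^2$-norm and both are ground states. Combined with the phase factor from Proposition \ref{11}, this yields the description of $\mathcal{N}_{\omega,\gamma}$ claimed in (ii). The principal technical step is the second-variation computation; conceptually its sign change at $\gamma=2$ is a pitchfork-type bifurcation in which the symmetric ground state loses its minimality in favor of an asymmetric pair.
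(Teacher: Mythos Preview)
Your proposal is correct and follows essentially the same route as the paper: both reduce (via $S_{\omega,\gamma}=\tfrac12\|\cdot\|_{L^2}^2$ on the Nehari set) to comparing $\Gamma(t_1)+\Gamma(t_2)$ along the constraint curve $t_1^{-1}+t_2^{-1}=\gamma$, parametrize this curve by one variable, and show by the second-derivative computation that the symmetric point $t_*=2/\gamma$ is a strict local maximum for $\gamma>2$, whence the asymmetric candidates carry the smaller action. The only point worth making explicit (it is implicit both in your write-up and in the paper) is that the step from ``strict local maximum'' to $\tilde h(t_1)<\tilde h(t_*)$ uses that $t_1,t_*,t_2$ are the \emph{only} critical points of $\tilde h$, so $\tilde h'$ keeps a constant sign on $(t_1,t_*)$; merely being a local maximum does not by itself exclude being the smallest of three isolated values.
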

A careful consideration of this theorem reveals the presence of a branch of ground states, that, at the critical value $\gamma=2$, bifurcates
in two branches; correspondingly, parity symmetry is broken. The occurrence of bifurcation and spontaneous symmetry breaking phenomenon in the ground state has been investigated in \cite{JACWE04} and more recently in \cite{KKDP, AS009, FUKSACC}.

The next step in the study of ground states to \eqref{GS} is to understand their stability. The basic symmetry associated to equation \eqref{00NL} is the phase-invariance (while the translation invariance  does not hold due to the defect). Thus,  the definition  of stability  takes into account only  this  type of symmetry and is formulated as follows.

\begin{definition}\label{2D111}
We say that  a standing wave solution $u(x,t)=e^{i\omega t}\phi(x)$ of \eqref{00NL} is orbitally stable in $W(\dot{\mathbb{R}})$ if for any  $\epsilon>0$ there exist $\eta>0$  such that if $u_{0}\in W(\dot{\mathbb{R}})$ and $\left\|u_{0}-\varphi \right\|_{W(\dot{\mathbb{R}})}<\eta$, then the solution $u(t)$ of  \eqref{00NL}  with $u(0)=u_{0}$ exist for all $t\in \mathbb R$ and satisfies
\begin{equation*}
{\rm\sup\limits_{t\in \mathbb R}} {\rm\inf\limits_{\theta\in \mathbb{R}}} \|u(t)-e^{i\theta}\phi \|_{W(\dot{\mathbb{R}})}<\epsilon.
\end{equation*}
Otherwise, the standing wave $e^{i\omega t}\phi(x)$ is said to be  unstable in $W(\dot{\mathbb{R}})$.
\end{definition}

Our second  main result shows that the ground states are orbitally stable for every $\omega\in \mathbb{R}$.

\begin{theorem} \label{EST}
Let  $\gamma>0$ and $\omega \in \mathbb{R}$. Then the following assertions hold.\\
 (i) Let $0<\gamma\leq 2$, then the standing wave $e^{i\omega t}\phi^{t_{\ast},t_{\ast}}_{\omega}(x)$ is orbitally stable in  $W(\dot{\mathbb{R}})$.\\
 (ii) Let $\gamma>2$, then the standing waves $e^{i\omega t}\phi^{t_{2},t_{1}}_{\omega}(x)$ and $e^{i\omega t}\phi^{t_{1},t_{2}}_{\omega}(x)$ are orbitally stable in $W({\dot{\mathbb{R}}})$.
\end{theorem}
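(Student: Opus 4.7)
The plan is to run the classical Cazenave--Lions variational stability argument, adapted to the Nehari minimization \eqref{MPE} and to the possible symmetry breaking when $\gamma>2$. Let $\phi$ denote the ground state whose stability we test, namely $\phi=\phi^{t_\ast,t_\ast}_\omega$ in case (i) and $\phi=\phi^{t_1,t_2}_\omega$ or $\phi^{t_2,t_1}_\omega$ in case (ii). Arguing by contradiction, assume there exist $\epsilon_0>0$, initial data $u_{0,n}\to\phi$ in $W(\dot{\mathbb R})$, and times $t_n\in\mathbb R$ with $\inf_{\theta}\|u_n(t_n)-e^{i\theta}\phi\|_{W(\dot{\mathbb R})}\ge \epsilon_0$, where $u_n$ is the solution of \eqref{00NL} with datum $u_{0,n}$ given by Proposition \ref{PCS}. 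In case (ii) I shrink $\epsilon_0$ to be strictly less than one third of the $W(\dot{\mathbb R})$-distance between the two orbits $\{e^{i\theta}\phi^{t_1,t_2}_\omega\}$ and $\{e^{i\theta}\phi^{t_2,t_1}_\omega\}$, and I replace $t_n$ by the first time at which the infimum equals $\epsilon_0$; such a time exists because the map $t\mapsto \inf_\theta\|u_n(t)-e^{i\theta}\phi\|_{W(\dot{\mathbb R})}$ is continuous (from $u_n\in C(\mathbb R,W(\dot{\mathbb R}))$) and starts below $\epsilon_0$ for $n$ large.

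Setting $v_n:=u_n(t_n)$ and using conservation of $E$ and of the $L^2$-norm from Proposition \ref{PCS},
\[
S_{\omega,\gamma}(v_n)=E(u_{0,n})+\tfrac{\omega+1}{2}\|u_{0,n}\|_{L^2}^2\longrightarrow S_{\omega,\gamma}(\phi)=d_\gamma(\omega),\qquad \|v_n\|_{L^2}^2\to \|\phi\|_{L^2}^2=2d_\gamma(\omega),
\]
the last equality coming from $2S_{\omega,\gamma}(\phi)=I_{\omega,\gamma}(\phi)+\|\phi\|_{L^2}^2=\|\phi\|_{L^2}^2$ together with the second characterization of $d_\gamma(\omega)$ in \eqref{MPE}. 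Subtracting gives $I_{\omega,\gamma}(v_n)=2S_{\omega,\gamma}(v_n)-\|v_n\|_{L^2}^2\to 0$, so $(v_n)$ is an \emph{almost-Nehari} minimizing sequence for $d_\gamma(\omega)$.

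The heart of the proof is the compactness claim: any sequence $(w_n)\subset W(\dot{\mathbb R})$ with $I_{\omega,\gamma}(w_n)\to 0$ and $S_{\omega,\gamma}(w_n)\to d_\gamma(\omega)$ admits, up to a subsequence and multiplication by unit phases, a strong $W(\dot{\mathbb R})$-limit in $\mathcal{N}_{\omega,\gamma}$. The rescaling $\lambda_n w_n$ with $\lambda_n^2=\exp(I_{\omega,\gamma}(w_n)/\|w_n\|_{L^2}^2)$ satisfies $I_{\omega,\gamma}(\lambda_n w_n)=0$ and $\lambda_n\to 1$, producing an honest Nehari-minimizing sequence with the same limit behaviour; I would then reproduce the compactness step of Theorem \ref{ESSW}, extracting a weak $W(\dot{\mathbb R})$-limit $w_\infty$, ruling out vanishing (here the absence of translation invariance is an \emph{asset}: the $\delta'$-defect pins the problem at the origin and the linear form $\mathfrak t_\gamma$ prevents escape to $\pm\infty$), and upgrading weak to strong convergence via the Orlicz/Trudinger-type control on $\int|w_n|^2\log|w_n|^2$ built into the definition of $W(\dot{\mathbb R})$. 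Remark \ref{RM} then identifies $w_\infty$ as a ground state.

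Applying this claim to $(v_n)$ yields $v_n\to e^{i\theta_\infty}\psi$ in $W(\dot{\mathbb R})$ along a subsequence, for some $\psi\in\mathcal N_{\omega,\gamma}$. In case (i), Theorem \ref{12}(i) forces $\psi=\phi^{t_\ast,t_\ast}_\omega$, so $\inf_\theta\|v_n-e^{i\theta}\phi\|_{W(\dot{\mathbb R})}\to 0$, contradicting the lower bound $\epsilon_0$. In case (ii), Theorem \ref{12}(ii) gives $\psi\in\{\phi^{t_1,t_2}_\omega,\phi^{t_2,t_1}_\omega\}$ up to a phase; the triangle inequality together with the choice $3\epsilon_0<\mathrm{dist}(\mathcal O_{\phi^{t_1,t_2}_\omega},\mathcal O_{\phi^{t_2,t_1}_\omega})$ forces $\psi$ to lie in the same orbit as $\phi$ (the other orbit stays at distance $\ge 2\epsilon_0$ from $v_n$), and the first-exit-time equality $\inf_\theta\|v_n-e^{i\theta}\phi\|_{W(\dot{\mathbb R})}=\epsilon_0$ is contradicted by $\inf_\theta\|v_n-e^{i\theta}\phi\|_{W(\dot{\mathbb R})}\to 0$. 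The main obstacle is the compactness claim, specifically passing to the limit in the logarithmic nonlinearity across weak convergence in the Orlicz-type space $W(\dot{\mathbb R})$; everything else is bookkeeping using Proposition \ref{PCS} and the already-proved Theorems \ref{ESSW}--\ref{12}.
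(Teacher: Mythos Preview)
Your proposal is correct and follows essentially the same route as the paper: contradiction argument, conservation laws to produce an almost-Nehari minimizing sequence, the rescaling $\rho_n=\exp(I_{\omega,\gamma}(v_n)/(2\|v_n\|_{L^2}^2))$ to land exactly on the Nehari manifold, and then a compactness lemma (the paper isolates this as Lemma \ref{CSM}, building on the proof of Theorem \ref{ESSW} together with the Orlicz convergence criterion in the Appendix) to force strong convergence to a ground state; in case (ii) the paper uses the same first-exit-time/triangle-inequality trick with $\epsilon\le\inf_\theta\|\phi^{t_1,t_2}_\omega-e^{i\theta}\phi^{t_2,t_1}_\omega\|$ to rule out the wrong orbit. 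The only cosmetic difference is that the paper's compactness lemma does not require extracting phases (the limit $\varphi$ lands directly in $\mathcal N_{\omega,\gamma}$, which is already phase-invariant).
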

The proof of Theorem \ref{EST} is based on the variational characterization of the stationary solutions $\varphi$ for \eqref{GS} as minimizers  of the action $S_{\omega, \gamma}$ on the Nehari manifold (see Theorem \ref{12}) and from the compactness of the minimizing
sequences (see Lemma \ref{CSM} below) for $d_{\gamma}(\omega)$.
We remark that nothing is known about orbital stability of the first excited state arising from the ground state from the bifurcation point, which exist for every $\gamma>2$. It is a conjecture  that excited states are unstable, but we not have a proof of this fact.

\begin{remark}
A similar analysis is carried out in \cite{AnguloArdila2016}, in the case of  logarithmic Schr\"{o}dinger equation with  attractive delta potential. Indeed, it was shown in \cite{AnguloArdila2016} that there exists a unique positive (up to a phase) ground state and it is orbitally stable.
\end{remark}

The rest of the paper is organized as follows. In Section \ref{S:-1}, we analyse the structure of the energy space $\dot{W}(\mathbb{R}^{})$.
In Section \ref{S:0}, we give an idea of the proof of  Proposition \ref{PCS}. In Section \ref{S:2} we prove, by variational techniques, the existence of a minimizer for $d_{\gamma}(\omega)$. In Section \ref{S:3}, we explicitly compute the ground states (Theorem \ref{12}). The Section \ref{S:4} is devoted to the proof of Theorem \ref{EST}. In the Appendix we list some properties of the Orlicz space $L^{A}(\mathbb{R})$ defined in Section \ref{S:-1}.

\begin{notation}
The space $L^{2}(\mathbb{R},\mathbb{C})$  will be denoted  by $L^{2}(\mathbb{R})$ and its norm by $\|\cdot\|_{L^{2}}$.  This space will be endowed  with the real scalar product
\begin{equation*}
\left(u,v\right)=\Re\int_{\mathbb{R}}u\overline{v}\, dx \,\,\,\,\,\,\,\,\,\,\,\,\, \mathrm{for }\,\,\,\,\,\,\,\ u,v\in L^{2}\left(\mathbb{R}\right).
\end{equation*}
The space $H^{1}(\mathbb{R},\mathbb{C})$ will be denoted by $H^{1}(\mathbb{R})$, its norm by $\|\cdot\|_{H^{1}(\mathbb{R})}$. We write $H_{\rm rad}^{1}(\mathbb{R})$ for the space of radial (even) function on $H^{1}(\mathbb{R})$. The space $H^{1}(\mathbb{R}\setminus\left\{0\right\},\mathbb{C})$ is equipped with their usual real inner product, it will be denoted  by $\Sigma$ and its norm by $\|\cdot\|_{\Sigma}$. We denote by $C_{0}^{\infty}\left(\mathbb{R}\setminus\left\{0\right\} \right)$ the set of $C^{\infty}$ functions from $\mathbb{R}\setminus\left\{0\right\}$ to $\mathbb{C}$ with compact support. $\left\langle \cdot , \cdot \right\rangle$ is the duality pairing between $E^{\prime}$ and $E$, where $E$ is a Hilbert (more generally, Banach space) and $E^{\prime}$ is its dual. Characteristic function  on $\mathbb{R}^{+}=(0,+\infty)$ (resp. $\mathbb{R}^{-}=(-\infty,0)$) will be denoted by $\chi_{+}$ (resp. $\chi_{-}$). Throughout this paper, the letter $C$ will denote positive constants.
\end{notation}

\section{Preliminaries}\label{S:-1}

The purpose of this section is to describe  the structure of space ${W}(\dot{\mathbb{R}})$.  Also, we will show that the energy functional $E$ is of class $C^{1}$ on ${W}(\dot{\mathbb{R}})$.

We need to introduce some notation. Define
\begin{equation*}
F(z)=\left|z\right|^{2}\mbox{Log}\left|z\right|^{2}\,\,\,\,\,\,\,\,  \text{for every  $z\in\mathbb{C}$},
\end{equation*}
and as in \cite{CL},  we define the functions  $A$, $B$ on $\left[0, \infty\right)$  by
\begin{equation}\label{IFD}
A(s)=
\begin{cases}
-s^{2}\,\mbox{Log}(s^{2}), &\text{if $0\leq s\leq e^{-3}$;}\\
3s^{2}+4e^{-3}s^{}-e^{-6}, &\text{if $ s\geq e^{-3}$;}
\end{cases}
\,\,\,\,\,\,\,\,\,  B(s)=F(s)+A(s).
\end{equation}
Furthermore, let be functions $a$, $b$, defined by
\begin{equation}\label{abapex}
a(z)=\frac{z}{|z|^{2}}\,A(\left|z\right|)\,\, \text{ and  }\,\, b(z)=\frac{z}{|z|^{2}}\,B(\left|z\right|) \text{  for $z\in \mathbb{C}$, $z\neq 0$}.
\end{equation}
Notice that we have $b(z)-a(z)=z\,\mathrm{Log}\left|z\right|^{2}$. It follows that $A$ is a nonnegative  convex and increasing function, and $A\in C^{1}\left([0,+\infty)\right)\cap C^{2}\left((0,+\infty)\right)$. The Orlicz space $L^{A}(\mathbb{R})$ corresponding to $A$ is defined by
\begin{equation*}
L^{A}(\mathbb{R})=\left\{u\in L^{1}_{loc}(\mathbb{R}) : A(\left|u\right|)\in L^{1}_{}(\mathbb{R})\right\},
\end{equation*}
equipped with the Luxemburg norm
\begin{equation*}
\left\|u\right\|_{L^{A}}={\inf}\left\{k>0: \int_{\mathbb{R}^{}}A\left(k^{-1}{\left|u(x)\right|}\right)dx\leq 1 \right\}.
\end{equation*}
Here as usual $L^{1}_{loc}(\mathbb{R})$ is the space of all locally Lebesgue integrable functions. It is proved in \cite[Lemma 2.1]{CL} that $A$ is a Young-function which is $\Delta_{2}$-regular and $\left(L^{A}(\mathbb{R}),\|\cdot\|_{L^{A}} \right)$ is a separable reflexive  Banach space.

Next, we consider the reflexive Banach space $W(\dot{\mathbb{R}})=\Sigma\cap L^{A}(\mathbb{R})$ equipped with the usual norm $ \left\|u\right\|_{{W}(\dot{\mathbb{R}})}=\left\|u\right\|_{\Sigma}+\left\|u\right\|_{L^{A}}$ (see \eqref{ASE}). It is easy to see that ${W}(\dot{\mathbb{R}})=\bigl\{u\in \Sigma:\left|u\right|^{2}\mathrm{Log}\left|u\right|^{2}\in L^{1}(\mathbb{R})\bigr\}$  (see \cite[Proposition 2.2]{CL} for more details). Furthermore, it is clear that the dual space (see \cite[Proposition 1.1.3]{CB})
\begin{equation*}
{W}^{\prime}(\dot{\mathbb{R}})=\Sigma^{\prime}+L^{A^{\prime}}(\mathbb{R}^{}),
\end{equation*}
where the Banach space ${W}^{\prime}(\dot{\mathbb{R}})$ is equipped with its usual norm. Here, $L^{A^{\prime}}(\mathbb{R}^{})$ is the dual space of $L^{A}(\mathbb{R})$ (see \cite{CL}).

\begin{lemma} \label{APEX23}
The operator $L: u\rightarrow H_{\gamma}u-u\,  \mathrm{Log}\left|u\right|^{2}$ is continuous from  $W(\dot{\mathbb{R}})$  to $W^{\prime}(\dot{\mathbb{R}})$. The image under $L$ of a bounded subset of $W(\dot{\mathbb{R}})$ is a bounded subset of $W^{\prime}(\dot{\mathbb{R}})$.
\end{lemma}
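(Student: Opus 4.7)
The plan is to decompose $L(u) = H_\gamma u - u\,\mathrm{Log}|u|^2$ into three pieces landing in the two summands of $W'(\dot{\mathbb{R}}) = \Sigma' + L^{A'}(\mathbb{R})$: the linear part $H_\gamma u$, and, using the identity $z\,\mathrm{Log}|z|^2 = b(z) - a(z)$ from \eqref{abapex}, the two nonlinear contributions $b(u)$ and $-a(u)$. Each piece will be handled separately with its own continuity and boundedness estimate, and then summed.

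First, I would treat the linear part. The quadratic form $\mathfrak{t}_\gamma$ is continuous on $\Sigma = H^1(\mathbb{R}\setminus\{0\})$: the Dirichlet part is obviously bounded, and the boundary term is controlled by the trace estimate $|u(0\pm)| \leq C\|u\|_\Sigma$ on each half-line. Hence $\mathfrak{t}_\gamma$ defines a bounded linear operator $H_\gamma : \Sigma \to \Sigma'$ by $\langle H_\gamma u, v\rangle = \mathfrak{t}_\gamma[u,v]$, and since $W(\dot{\mathbb{R}}) \hookrightarrow \Sigma$ continuously, the embedding $\Sigma' \hookrightarrow W'(\dot{\mathbb{R}})$ yields $\|H_\gamma u\|_{W'(\dot{\mathbb{R}})} \leq C\|u\|_{W(\dot{\mathbb{R}})}$, with continuity being automatic from linearity.

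Next I would address $b(u)$. By \eqref{IFD} one has $B(s)=0$ for $s \leq e^{-3}$, so $b(u)$ is supported on $\{|u|\geq e^{-3}\}$ and satisfies $|b(u)| = B(|u|)/|u| \leq C(|u|\,|\mathrm{Log}|u|^2| + |u| + 1)\chi_{\{|u|\geq e^{-3}\}}$. Since in one dimension $\Sigma \hookrightarrow L^\infty(\mathbb{R})$, the function $|u|$ is uniformly bounded on bounded subsets of $\Sigma$, so $|b(u)|$ is pointwise dominated by $C(\|u\|_\Sigma) \cdot |u|$; in particular $b(u) \in L^2(\mathbb{R})$ with $\|b(u)\|_{L^2} \leq C(\|u\|_\Sigma)\|u\|_{L^2}$. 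The embedding $L^2(\mathbb{R}) \hookrightarrow \Sigma' \hookrightarrow W'(\dot{\mathbb{R}})$ gives the bound, and continuity of $u \mapsto b(u)$ in this topology follows from the continuity of $z \mapsto b(z)$ on $\mathbb{C}$ (with $b(0)=0$) together with the dominated convergence theorem, the $L^\infty$-control furnishing the dominating function along any norm-bounded sequence.

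The main obstacle is the third piece, $a(u)$, which must be placed in $L^{A'}(\mathbb{R})$. Here I would rely on the Orlicz-space machinery recalled in the Appendix. From \eqref{IFD}--\eqref{abapex} one has $|a(z)| = A(|z|)/|z|$, and for small $|z|$ this equals $2|z|\,\mathrm{Log}(1/|z|)$, a quantity tightly coupled to the complementary Young function $A^*=A'$. The key estimate, obtained from the definition of $A^*$ and the $\Delta_2$-regularity of $A$, is of the form
\begin{equation*}
\int_{\mathbb{R}} A^{*}\!\left(k^{-1}|a(u)|\right)dx \leq C\int_{\mathbb{R}} A(|u|)\,dx
\end{equation*}
for a suitable $k = k(\|u\|_{L^A})$, which, passed to Luxemburg norms, yields $\|a(u)\|_{L^{A'}} \leq \Phi(\|u\|_{L^A})$ for a continuous function $\Phi$. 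The same estimate, combined with pointwise continuity of $a$ and the $\Delta_2$-regularity (which forces norm convergence to coincide with modular convergence), gives continuity of $u \mapsto a(u)$ from $L^A(\mathbb{R})$ to $L^{A'}(\mathbb{R})$, and hence into $W'(\dot{\mathbb{R}})$. Summing the three estimates completes the proof; boundedness on bounded sets is read off from the explicit quantitative bounds obtained at each step.
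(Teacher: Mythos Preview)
Your proposal is correct and follows essentially the same route as the paper: decompose $L(u)$ as $H_\gamma u - b(u) + a(u)$, send $H_\gamma u$ into $\Sigma'$ via the form, $b(u)$ into $L^2\hookrightarrow\Sigma'$, and $a(u)$ into $L^{A'}$ via the Orlicz estimates (which the paper simply cites from \cite[Lemma~2.3]{CL}). The only cosmetic difference is in the handling of $b$: the paper uses the pointwise H\"older-type bound $|b(z)-b(w)|\leq C_\epsilon|z-w|(|z|+|w|)^{\epsilon}$ to get continuity $\Sigma\to L^2$ directly, whereas you invoke the vanishing of $B$ near zero and an $L^\infty$ bound followed by dominated convergence; both arguments work, though yours would be cleaner if you noted that $b$ is Lipschitz on bounded subsets of $\mathbb{C}$ (which makes the $L^2$ continuity immediate without appealing to a generalized DCT).
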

\begin{proof}
Notice that, as usual, the operator $H_{\gamma}$ is naturally extended to $H_{\gamma}: \Sigma\rightarrow\Sigma^{\ast}$ defined by
\begin{equation*}
\left\langle {H}_{\gamma}u,v\right\rangle=\mathfrak{t_{\gamma}}[u_{},v],  \quad  \textrm{for} \quad u,v\in \Sigma.
\end{equation*}
Now, using ${W}({\dot{\mathbb{R}}})\hookrightarrow \Sigma$, we obtain that $u\rightarrow H_{\gamma}u$ is continuous from ${{W}}(\dot{{\mathbb{R}}})$ to ${W}^{\prime}(\dot{{\mathbb{R}}})$. On the other hand, one easily verifies that for $\epsilon>0$ there exist $C_{\epsilon}$ such that
\begin{equation*}
\left|b(z)-b(z_{1})\right|\leq C_{\epsilon}\left|z-z_{1}\right|(\left|z\right|+\left|z_{1}\right|)^{\epsilon}\quad \text{  for $z$, $z_{1}\in \mathbb{C}$},
\end{equation*}
which combined with H{\"o}lder inequality and Sobolev embedding gives
\begin{equation*}
\left\|b(u)-b(v)\right\|^{}_{L^{2}}\leq C\left\|u-v\right\|_{\Sigma}\left(\left\|u\right\|_{\Sigma}+\left\|u\right\|_{\Sigma}\right)^{1/2}   \quad      \text{  for $u$, $v\in \Sigma$}.
\end{equation*}
Thus, we obtain that $u\rightarrow b(u)$ is continuous and bounded from $\Sigma$ to $L^{2}(\mathbb{R})$, then from ${{W}}(\dot{{\mathbb{R}}})$ to ${W}^{\prime}(\dot{{\mathbb{R}}})$. Finally, by \cite[Lemma 2.3]{CL}, $u\rightarrow a(u)$ is continuous and bounded from $L^{A}(\mathbb{R})$ to $L^{A^{\prime}}(\mathbb{R}^{})$, then from ${{W}}(\dot{{\mathbb{R}}})$ to ${W}^{\prime}(\dot{{\mathbb{R}}})$ and since $b(z)-a(z)=z\mathrm{Log}\left|z\right|^{2}$, Lemma \ref{APEX23} is proved.
\end{proof}

The following proposition shows that $E\in C^{1}(W(\dot{\mathbb{R}}), \mathbb{R})$. More exactly, we obtain the following result.

\begin{proposition} \label{DFFE}
The operator $E: W(\dot{\mathbb{R}})\rightarrow \mathbb R$  is of class $C^{1}$ and  for $u\in W(\dot{\mathbb{R}})$ the  Fr\'echet derivative of $E$ in $u$ exists and it is given by
\begin{equation*}
E^{\prime}(u)={H}_{\gamma}u-u\, \mathrm{Log}\left|u\right|^{2}-u.
\end{equation*}
\end{proposition}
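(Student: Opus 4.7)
The plan is to decompose $E$ along the splitting $F=B-A$ from \eqref{IFD}, compute the Gâteaux derivative piecewise, and then use Lemma \ref{APEX23} to upgrade the result to Fréchet $C^{1}$. Concretely I would write
\[
E(u)=\frac{1}{2}\mathfrak{t}_{\gamma}[u]+\frac{1}{2}\int_{\mathbb{R}}A(|u|)\,dx-\frac{1}{2}\int_{\mathbb{R}}B(|u|)\,dx.
\]
The quadratic part is handled by polarization: since $\mathfrak{t}_{\gamma}$ is bounded and symmetric on $\Sigma$, the expansion $\frac{1}{2}\mathfrak{t}_{\gamma}[u+v]-\frac{1}{2}\mathfrak{t}_{\gamma}[u]=\mathfrak{t}_{\gamma}[u,v]+\frac{1}{2}\mathfrak{t}_{\gamma}[v]$ shows this piece is Fréchet differentiable with derivative $u\mapsto H_{\gamma}u\in\Sigma^{\prime}\hookrightarrow W^{\prime}(\dot{\mathbb{R}})$.

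For the two integral terms I would compute Gâteaux derivatives by the chain rule applied pointwise. For $u,v\in W(\dot{\mathbb{R}})$ and $t\to 0$, formally
\[
\frac{d}{dt}\bigg|_{t=0}\int_{\mathbb{R}}A(|u+tv|)\,dx=\mathrm{Re}\int_{\mathbb{R}}A^{\prime}(|u|)\,\frac{\bar{u}}{|u|}\,v\,dx,
\]
and the analogous identity with $B$; subtracting and using $F^{\prime}(s)=B^{\prime}(s)-A^{\prime}(s)=2s(\log s^{2}+1)$ delivers
\[
\frac{d}{dt}\bigg|_{t=0}\frac{1}{2}\int_{\mathbb{R}}F(|u+tv|)\,dx=\mathrm{Re}\int_{\mathbb{R}}\overline{u(\log|u|^{2}+1)}\,v\,dx.
\]
The interchange of limit and integral is done piecewise: the $B$-integrand vanishes on $\{|u+tv|\leq e^{-3}\}$ and is polynomially controlled elsewhere, so the embedding $\Sigma\hookrightarrow L^{\infty}$ with Hölder produces a $t$-uniform $L^{1}$ majorant; for the $A$-integrand I would dominate the difference quotient using convexity and the $\Delta_{2}$-property of $A$, with the bound measured in the Luxemburg norm on $L^{A}(\mathbb{R})$. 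Assembling the three pieces gives the Gâteaux derivative $E^{\prime}(u)=H_{\gamma}u-u\log|u|^{2}-u$.

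To upgrade from Gâteaux to $C^{1}$ Fréchet it suffices to check continuity of $u\mapsto E^{\prime}(u)$ from $W(\dot{\mathbb{R}})$ to $W^{\prime}(\dot{\mathbb{R}})$. This is handed to us directly by Lemma \ref{APEX23} (which already covers $H_{\gamma}u-u\log|u|^{2}$) together with the trivial continuity of $u\mapsto u$ via $W(\dot{\mathbb{R}})\hookrightarrow L^{2}(\mathbb{R})\hookrightarrow W^{\prime}(\dot{\mathbb{R}})$. The main obstacle I anticipate is precisely the dominated convergence argument for the $A$-integrand near the zero set of $u$: the integrand $A^{\prime}(|u|)\bar{u}/|u|$ must be interpreted by continuity since $A^{\prime}(s)=O(s|\log s|)$ as $s\to 0$, and the uniform majorant for small $|u|$ relies on the sharp concavity of $s\mapsto -s\log s$ that is built into the Orlicz structure, which is exactly where the $A/B$ splitting earns its keep.
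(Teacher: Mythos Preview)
Your proposal is correct and follows essentially the same route as the paper: decompose $E$ via the $A/B$ splitting, compute the G\^ateaux derivative termwise, and then invoke Lemma~\ref{APEX23} (plus the trivial continuity of $u\mapsto u$) to upgrade to Fr\'echet $C^{1}$. The paper is terser---it outsources the dominated-convergence details for the nonlinear terms to \cite[Proposition~2.7]{CL} and first checks continuity of $E$ itself---but the structure and the key lemma are identical.
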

\begin{proof}
We first show that $E$  is continuous. Notice that
\begin{equation}\label{CCC}
E(u)=\frac{1}{2}\mathfrak{t}_{\gamma}[u]+\frac{1}{2}\int_{\mathbb{R}}A(\left|u\right|)dx-\frac{1}{2}\int_{\mathbb{R}}B(\left|u_{}\right|)dx.
\end{equation}
The first term in the right-hand side of \eqref{CCC} is continuous $\Sigma\rightarrow \mathbb R$, and it follows from
Proposition \ref{orlicz}(i) in Appendix that the second term is continuous $L^{A}(\mathbb{R})\rightarrow \mathbb{R}$. Moreover, by \eqref{DB} below, we get that the third term  right-hand side of \eqref{CCC} is continuous $\Sigma\rightarrow \mathbb R$. Therefore, $E\in C(W(\dot{\mathbb{R}}),\mathbb{R})$. Now, direct calculations show that, for $u$, $v\in W(\dot{\mathbb{R}})$, $t\in (-1,1)$ (see \cite[Proposition 2.7]{CL}),
\begin{equation*}
\lim_{t\rightarrow 0} \frac{E(u+tv)-E(u)}{t}=\bigl\langle {H}_{\gamma}u-u\, \mbox{Log}\left|u\right|^{2}-u,v\bigl\rangle
\end{equation*}
Thus, $E$ is G\^ateaux differentiable. Then, by Lemma \ref{APEX23} we see that $E$ is  Fr\'echet differentiable  and $E^{\prime}(u)={H}_{\gamma}u-u\,\mbox{Log}\left|u\right|^{2}-u$.
\end{proof}

\section{The Cauchy problem}
\label{S:0}
In this section we sketch the proof of the global well-posedness of the Cauchy Problem  for \eqref{00NL} in the energy space  $\dot{W}(\mathbb{R}^{})$. The proof of Proposition \ref{PCS} is an adaptation of the proof of \cite[Theorem 9.3.4]{CB} (see also \cite{AnguloArdila2016}).  So, we will approximate the logarithmic nonlinearity by a smooth nonlinearity, and as a consequence we construct a sequence of global solutions of the regularized Cauchy problem in $C(\mathbb{R},\Sigma)\cap C^{1}(\mathbb{R}, \Sigma^{\prime})$,  then we pass to the limit using standard compactness results, extract a subsequence which converges to the solution of the limiting equation \eqref{00NL}.

Before proceeding to the proof of Proposition \ref{PCS}, we first need some preliminary remarks. Let us recall that $\Sigma=H^{1}(\mathbb{R})\oplus\mbox{span}\left\{\zeta\right\}$, where
\begin{equation}\label{Z}
\zeta(x)=\frac{1}{2}\left(\frac{x}{\left|x\right|}\right)e^{-\left|x\right|},\quad  \textrm{for} \quad  x\in\mathbb{R}\verb'\'  \left\{0 \right\}.
\end{equation}
Moreover,  it is known that for any function $u\in\Sigma$ there exists a unique couple of functions $u_{-}$, $u_{+}\in H_{\rm rad}^{1}(\mathbb{R})$  such that $u(x)=\chi_{+}(x)u_{+}(x)+\chi_{-}(x)u_{-}(x)$ for all $x\in \mathbb{R}\setminus\left\{0\right\}$. As a consequence (see \cite{ADN1, ADNP}),
\begin{equation}\label{ADE}
\left\|u\right\|^{2}_{\Sigma}=\frac{1}{2}\left\|u_{+}\right\|^{2}_{ H^{1}_{}(\mathbb{R})}+\frac{1}{2}\left\|u_{-}\right\|^{2}_{ H^{1}_{}(\mathbb{R})}.
\end{equation}

Next, we regularize the logarithmic nonlinearity near the origin.  For $z\in \mathbb{C}$ and $m\in \mathbb{N}$,  we define the functions $a_{m}$ and $b_{m}$ by
\begin{equation*}
a_{m}(z)=
\begin{cases}
 a_{}(z), &\text{if $\left|z\right|\geq \frac{1}{m}$;}\\
m\,z\,a_{}(\frac{1}{m}) , &\text{if $\left|z\right|\leq \frac{1}{m}$;}
\end{cases}
\quad \text{and} \quad
b_{m}(z)=
\begin{cases}
 b_{}(z) , &\text{if $\left|z\right|\leq {m}$;}\\
\frac{z}{m}\,b({m}) , &\text{if $\left|z\right|\geq {m}$,}
\end{cases}
\end{equation*}
where   $a$ and $b$ were defined in \eqref{abapex}. Moreover,  we set $g_{m}(z)=b_{m}(z)-a_{m}(z)$ and
\begin{equation*}
G_{m}(z)=\int^{\left|z\right|}_{0}g_{m}(s)ds
\end{equation*}
for  $z\in \mathbb{C}$. Notice that  the function $g_{m}: L^{2}(\mathbb{R})\rightarrow L^{2}(\mathbb{R})$ is globally Lipschitz continuous  on $ L^{2}(\mathbb{R})$ and $(g_{m}(u),iu)=0$ for every $u\in \Sigma$.

For the proof of Proposition \ref{PCS}, we will use the following two results.
\begin{proposition} \label{APCS}
For any $u_{0}\in \Sigma$ and $m\in \mathbb{N}$, there is a unique maximal solution  $u^{m}\in C(\mathbb{R},\Sigma)\cap C^{1}(\mathbb{R}, \Sigma^{\prime})$  of
\begin{equation}\label{AHAX}
\begin{cases}
i\partial_{t}u^{m}-H_{\gamma}u^{m}+g_{m}(u^{m})=0,\\
u^{m}(0)=u_{0}.
\end{cases}
\end{equation}
Furthermore, the conservation of charge and energy hold; that is, for all $t\in \mathbb{R}$, $\left\|u^{m}(t)\right\|^{2}_{L^{2}}=\left\|u_{0}\right\|^{2}_{L^{2}}$ and
\begin{equation*}
\mathcal{E}_{m}(u^{m}(t))=\mathcal{E}(u_{0}), \quad \text{where} \quad \mathcal{E}_{m}(u)=\frac{1}{2}\mathfrak{t_{\gamma}}[u]-\int_{\mathbb{R}}G_{m}(u)\,dx.
\end{equation*}
\end{proposition}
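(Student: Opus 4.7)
The plan is to follow the standard semigroup and contraction-mapping approach for semilinear Schrödinger equations in the energy space (cf.\ \cite[Chapter 3]{CB}), adapted to the self-adjoint operator $H_{\gamma}$ and the regularized nonlinearity $g_{m}$. The idea is first to solve \eqref{AHAX} in $L^{2}(\mathbb{R})$ via a global Lipschitz fixed-point argument, and then to upgrade the regularity to the form domain $\Sigma$ by approximation of the initial data.

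The first step is spectral. Since $H_{\gamma}$ is self-adjoint and bounded below on $L^{2}(\mathbb{R})$, Stone's theorem yields a strongly continuous unitary group $\{U(t)\}_{t\in\mathbb{R}}$ with $U(t)=e^{-itH_{\gamma}}$. This group leaves the form domain $\Sigma$ invariant and preserves the shifted quadratic form $\mathfrak{t}_{\gamma}[u]+C\|u\|_{L^{2}}^{2}$ for $C$ sufficiently large, so it restricts to a $C_{0}$-group of isometries on $\Sigma$ equipped with an equivalent norm. I then rewrite \eqref{AHAX} in Duhamel form
\begin{equation*}
u^{m}(t)=U(t)u_{0}-i\int_{0}^{t}U(t-s)\,g_{m}(u^{m}(s))\,ds,
\end{equation*}
and use the fact that $g_{m}$ is globally Lipschitz on $L^{2}(\mathbb{R})$ to run a Banach contraction argument in $C([-T,T],L^{2}(\mathbb{R}))$. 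The global Lipschitz bound and the $L^{2}$-charge conservation (see below) then extend the local solution to all of $\mathbb{R}$, producing a unique $u^{m}\in C(\mathbb{R},L^{2}(\mathbb{R}))$.

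To promote this $L^{2}$-solution to a $\Sigma$-solution, I approximate $u_{0}\in\Sigma$ by a sequence $u_{0,k}\in\mathrm{dom}(H_{\gamma})$. For each $k$, classical semigroup theory (since $g_{m}$ is globally Lipschitz on $L^{2}$ and $U(t)$ preserves $\mathrm{dom}(H_{\gamma})$) produces a strong solution $u^{m}_{k}\in C(\mathbb{R},\mathrm{dom}(H_{\gamma}))\cap C^{1}(\mathbb{R},L^{2}(\mathbb{R}))$ for which the two conservation laws can be checked by direct differentiation: charge conservation follows from the identity $(g_{m}(u),iu)=0$ together with the self-adjointness of $H_{\gamma}$, while conservation of $\mathcal{E}_{m}$ is a consequence of the Hamiltonian structure $iu^{m}_{k,t}=H_{\gamma}u^{m}_{k}-g_{m}(u^{m}_{k})=\mathcal{E}_{m}'(u^{m}_{k})$, since $\langle v,-iv\rangle=0$ for any real-valued duality pairing. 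The conserved $\mathcal{E}_{m}$ gives a uniform-in-$k$ bound in $\Sigma$ on every time interval (the Lipschitz truncation of the logarithm keeps $\int G_{m}(u)\,dx$ controlled by $\|u\|_{L^{2}}^{2}$), and passing to the limit $k\to\infty$ yields $u^{m}\in C(\mathbb{R},\Sigma)\cap C^{1}(\mathbb{R},\Sigma')$ satisfying both identities. Uniqueness at the $\Sigma$-level is inherited from uniqueness at the $L^{2}$-level already established.

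The main obstacle is precisely this propagation of $\Sigma$-regularity when $u_{0}\notin\mathrm{dom}(H_{\gamma})$: the contraction argument is immediate in $L^{2}$ but does not see the form-level structure, and the conservation identities a priori only make sense for strong solutions. The density of $\mathrm{dom}(H_{\gamma})$ in $\Sigma$ together with the uniform $\mathcal{E}_{m}$-control from the regularized energy is what allows the limiting argument to succeed, after which the Lipschitz nature of $g_{m}$ on $L^{2}$ prevents blow-up and forces the maximal time of existence to be all of $\mathbb{R}$.
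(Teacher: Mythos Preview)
Your approach is correct and is essentially the same as the paper's: both rest on the self-adjointness and semiboundedness of $H_{\gamma}$, the equivalence of the form-domain norm with the $\Sigma$-norm, and the global $L^{2}$-Lipschitz property of $g_{m}$. The only difference is packaging---the paper verifies the hypotheses of Cazenave's abstract energy-space local theory (Theorem~3.7.1 in \cite{CB}) and invokes it as a black box, whereas you unfold that same argument by hand via the $L^{2}$ contraction plus domain approximation and conservation laws.
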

\begin{proof}
We use the argument in \cite[Proposition 3]{FO} and we apply Theorem 3.7.1 in  \cite{CB}.
First, we note that $H_\gamma$ satisfies $H_\gamma\geq -c$, where $c = 4/\gamma^2$ if $\gamma > 0$, and $c = 0$ if $\gamma < 0$. Thus, $A =- H_\gamma-c$ is a self-adjoint negative operator on $X = L^2(\mathbb{R})$  with domain  $\mathrm{dom}(A) = \mathrm{dom}(H_\gamma)$. In addition,  it is not difficult to show that the norm
\begin{equation*}
\left\|u\right\|^2_{X_A}=\mathfrak{t_{\gamma}}[u]+(c+1)\left\|u\right\|_{L^2}^2,
\end{equation*}
is  equivalent to the usual $\Sigma$-norm. Moreover, it is easy to see that the conditions (3.7.1), (3.7.3)-(3.7.6) in \cite[Section 3.7]{CB} hold choosing $r=\rho=2$, since we are in one dimensional case. Also, the condition (3.7.2) with $p=2$ follows easily from the self-adjointness of $A$. We remark that only the case $p=2$ in (3.7.2) is needed for our case since we can take $r=\rho=2$. Notice that the uniqueness of  solutions follows from Gronwall's lemma (see \cite[Corollary 3.3.11]{CB}). Finally, from \cite[Corollary 3.5.2]{CB} we see that the solution is global and uniformly bounded in $\Sigma$.
\end{proof}

For $k>0$,  we introduce the Hilbert space $\Sigma_{k}=H_{0}^{1}(B_{k})\oplus\mbox{span}\left\{\zeta\right\}$, where $B_{k}=\left\{ x\in\mathbb{R}: \left|x\right|<k \right\}$ and the function $\zeta$  is defined in \eqref{Z}. It follows in particular that the inclusion map $\Sigma_{k}\hookrightarrow \Sigma$ is continuous.


\begin{lemma}\label{3ACS}
Let $\left\{u^{{m}}\right\}_{m\in\mathbb{N}}$ be a bounded sequence in $L^{\infty}(\mathbb{R},\Sigma)$ and in $W^{1,\infty}(\mathbb{R},\Sigma_{k}^{\prime})$ for $k\in\mathbb{N}$. Then there exists a subsequence, which we still denote by $\left\{u^{{m}}\right\}_{m\in\mathbb{N}}$, and there exist  $u\in L^{\infty}(\mathbb{R},\Sigma)\cap W^{1,\infty}(\mathbb{R},\Sigma_{k}^{\prime})$ for every $k\in\mathbb{N}$, such that the following properties hold:\\
 (i)  $u^{m}(t)\rightharpoonup u^{}(t)$  in  $\Sigma$  as $m\rightarrow \infty$ for every $t\in \mathbb{R}$.\\
 (ii)	For every $t\in\mathbb{R}$ there exists a subsequence  $m_{j}$ such that $u^{m_{j}}(x,t)\rightarrow u^{}(x,t)$   as $j\rightarrow \infty$, for a.a. $x\in \mathbb{R}$.\\
 (iii)	  $u^{m_{}}(x,t)\rightarrow u^{}(x,t)$ as $m\rightarrow \infty$,  for a.a.  $(x,t)\in \mathbb{R}\times\mathbb{R}$.
\end{lemma}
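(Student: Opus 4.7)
My plan is to combine the Banach--Alaoglu theorem with an Arzel\`a--Ascoli extraction and a diagonal procedure, in the spirit of the construction of weak solutions \`a la Lions.

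First I would note the key compactness ingredient: the embedding $\Sigma\hookrightarrow\Sigma_{k}'$ is compact for every $k\in\mathbb{N}$. Indeed, decomposing any $v\in\Sigma_{k}$ as $v=v_{0}+c\zeta$ with $v_{0}\in H^{1}_{0}(B_{k})$ and using the duality pairing shows that the canonical map $\Sigma\to\Sigma_{k}'$ factors through $\Sigma\hookrightarrow L^{2}(B_{k})\hookrightarrow H^{-1}(B_{k})$ (plus the one-dimensional contribution from the $\zeta$ direction), and $\Sigma\hookrightarrow L^{2}(B_{k})$ is compact by Rellich--Kondrachov since $H^{1}(\mathbb{R}\setminus\{0\})$ embeds compactly into $L^{2}$ of any bounded set.

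Next, I would fix $k,N\in\mathbb{N}$. The boundedness of $\{u^{m}\}$ in $W^{1,\infty}(\mathbb{R},\Sigma_{k}')$ gives uniform Lipschitz continuity of the maps $t\mapsto u^{m}(t)$ from $[-N,N]$ into $\Sigma_{k}'$, and the boundedness of $\{u^{m}(t)\}$ in $\Sigma$ together with the compact embedding just described gives relative compactness of the pointwise orbits in $\Sigma_{k}'$. Arzel\`a--Ascoli then yields a subsequence converging strongly in $C([-N,N],\Sigma_{k}')$. A standard diagonal procedure over $(k,N)\in\mathbb{N}\times\mathbb{N}$ produces a single subsequence (still denoted by $\{u^{m}\}$) and a limit $u$ such that $u^{m}\to u$ in $C([-N,N],\Sigma_{k}')$ for every $k,N$. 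In particular the pointwise convergence $u^{m}(t)\to u(t)$ holds in $\Sigma_{k}'$ for every $t\in\mathbb{R}$ and every $k$; combined with the uniform bound $\sup_{m,t}\|u^{m}(t)\|_{\Sigma}<\infty$ and the density of $\bigcup_{k}\Sigma_{k}$ in $\Sigma$, the uniqueness of weak limits implies (i), namely $u^{m}(t)\rightharpoonup u(t)$ in $\Sigma$. The limit therefore lies in $L^{\infty}(\mathbb{R},\Sigma)\cap W^{1,\infty}(\mathbb{R},\Sigma_{k}')$ by lower semicontinuity of the norms.

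To obtain (ii), at any fixed $t\in\mathbb{R}$ the sequence $\{u^{m}(t)\}$ is bounded in $H^{1}(\mathbb{R}\setminus\{0\})$; Rellich again gives a subsequence $u^{m_{j}}(t)\to u(t)$ in $L^{2}_{\mathrm{loc}}(\mathbb{R})$ (the limit must coincide with $u(t)$ by (i)), and a further subsequence converges almost everywhere on $\mathbb{R}$. For (iii), I would upgrade the $C([-N,N],\Sigma_{k}')$ convergence to strong convergence in $L^{2}((-N,N)\times B_{k})$ by an Aubin--Lions argument applied to the triple $\Sigma\hookrightarrow L^{2}(B_{k})\hookrightarrow\Sigma_{k}'$ (the first embedding compact, the second continuous, $\{u^{m}\}$ bounded in $L^{\infty}((-N,N),\Sigma)$ and $\{\partial_{t}u^{m}\}$ bounded in $L^{\infty}((-N,N),\Sigma_{k}')$). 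Extracting an a.e.\ convergent subsequence on each compact rectangle $[-N,N]\times B_{k}$ and carrying out a final diagonal extraction over $(N,k)$ yields $u^{m}\to u$ almost everywhere on $\mathbb{R}\times\mathbb{R}$.

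The main technical point I expect to spend care on is the verification that $\Sigma\hookrightarrow\Sigma_{k}'$ is genuinely compact despite the presence of the singular direction $\zeta$ in $\Sigma_{k}$: the $\zeta$-component of the duality pairing contributes only the continuous linear functional $u\mapsto(u,\zeta)_{L^{2}}$, which is of finite rank and hence compact, so the sum remains compact. Everything else is an essentially mechanical combination of Banach--Alaoglu, Arzel\`a--Ascoli and Aubin--Lions together with diagonal extraction; the proof is then standard and parallels \cite[Proposition~9.3.5]{CB}.
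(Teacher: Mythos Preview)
Your proposal is correct and follows essentially the same route as the paper's proof, which is itself only a sketch pointing to \cite[Lemma~9.3.6 and Proposition~1.1.2]{CB}: compact embedding of $\Sigma$ into the local dual, Arzel\`a--Ascoli plus diagonal extraction for (i), and local compactness/Rellich arguments for (ii)--(iii). Your write-up is simply more explicit (in particular your verification of compactness of $\Sigma\hookrightarrow\Sigma_{k}'$ and your use of Aubin--Lions for (iii)) than the paper, which defers these details to Cazenave's book.
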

\begin{proof}
We just sketch the proof since it follows the same ideas as the proof  of Lemma 9.3.6 in \cite{CB}. In fact, fix $k\in\mathbb{N}$. Note that
$\left\{u^{m}\right\}$ is a bounded sequence of $ L^{\infty}(B_{k},\Sigma)\cap W^{1,\infty}(B_{k},\Sigma_{k}^{\prime})$. Therefore, by \cite[Proposition 1.1.2]{CB} there exists a subsequence, which we still denote by $\left\{u^{{m}}\right\}_{m\in\mathbb{N}}$, and there exist  $u\in L^{\infty}(\overline{B}_{k},\Sigma)$ such that $u^{m}(t)\rightharpoonup u^{}(t)$  in  $\Sigma$  as $m\rightarrow \infty$ for every $t\in \overline{B}_{k}$. Thus, considering a diagonal sequence, we see that $u^{m}(t)\rightharpoonup u^{}(t)$  in  $\Sigma$  as $m\rightarrow \infty$ for every $t\in \mathbb{R}$ and $u\in L^{\infty}(\mathbb{R},\Sigma)$, and (i) follows.  In addition, by \cite[ Remark 1.3.13(ii)]{CB} and (i), we have that
$u\in W^{1,\infty}(\mathbb{R},\Sigma_{k}^{\prime})$ for every $k\in\mathbb{N}$. The remainder of the proof follows similarly to the remainder of the proof  of \cite[Lemma 9.3.6]{CB}.
\end{proof}

\noindent\begin{proof}[ \it {Proof of Proposition \ref{PCS}}]
 We only  discuss the  modifications that are not sufficiently clear. Applying Proposition \ref{APCS}, we see that  for every $m\in \mathbb{N}$ there exists a unique global solution $u^{m}\in C(\mathbb{R}, \Sigma)\cap C^{1}(\mathbb{R},\Sigma^{\prime})$ of \eqref{AHAX}, which satisfies
\begin{equation}\label{JKL}
\mathcal{E}_{m}(u^{m}(t))=\mathcal{E}_{m}(u_{0})\quad \mbox{and}\quad\left\|u^{m}(t)\right\|^{2}_{L^{2}}=\left\|u_{0}\right\|^{2}_{L^{2}}  \,\, \text{ for all $t\in \mathbb{R}$},
\end{equation}
where \begin{equation*}
\mathcal{E}_{m}(u)=\frac{1}{2}\mathfrak{t_{\gamma}}[u] +\frac{1}{2}\int_{\mathbb{R}}\Phi_{m}(\left|u_{}\right|)dx-\frac{1}{2}\int_{\mathbb{R}}\Psi_{m}(\left|u_{}\right|)dx,
\end{equation*}
and the functions $\Phi_{m}$ and $\Psi_{m}$ are defined by
\begin{equation*}
\Phi_{m}(z)=\frac{1}{2}\int^{\left|z\right|}_{0}a_{m}(s)ds\,\,\,\ \mbox{and} \,\,\,\ \Psi_{m}(z)=\frac{1}{2}\int^{\left|z\right|}_{0}b_{m}(s)ds.
\end{equation*}
It follows from \eqref{JKL} that $u^{m}$ is bounded in $L^{\infty}(\mathbb{R},L^{2}(\mathbb{R}))$. Moreover, we see that  $u^{m}$ is bounded in $L^{\infty}(\mathbb{R},\Sigma)$  (see proof of Step 2 of \cite[Theorem 9.3.4]{CB}). Now, by elementary computations we can check that
 for every  $\epsilon >0$ there exists $C_{\epsilon}$ such that
\begin{equation*}
\left|g_{m}(u)\right|\leq C_{\epsilon}\left(\left|u\right|^{1+\epsilon}+\left|u\right|^{1-\epsilon}\right),
\end{equation*}
which combined with H{\"o}lder and Sobolev embedding gives that $\left\{g_{m}(u^{m})\right\}$ is bounded $L^{\infty}(\mathbb{R},{L^{2}(B_{k}))}\cap L^{\infty}(\mathbb{R},L^{\infty}(\mathbb{R}))$  for $k\in\mathbb{N}$. In particular, $\left\{g_{m}(u^{m})\right\}_{m\in\mathbb{N}}$ is bounded in ${L^{\infty}(\mathbb{R},\Sigma^{\prime}_{k})}$ and thus from \eqref{AHAX} we see that $\left\{u^{{m}}\right\}_{m\in\mathbb{N}}$ is bounded in $W^{1,\infty}(\mathbb{R},\Sigma_{k}^{\prime})$ for every $k\in\mathbb{N}$.  Therefore, we have that  $\left\{u^{{m}}\right\}_{m\in\mathbb{N}}$ satisfies the assumptions of  Lemma \ref{3ACS}. Let $u$ be its limit. It follows from \eqref{AHAX}, that $u^{m}$ satisfies
\begin{equation*}
\int_{\mathbb{R}}\left\langle i\, u^{m}_{t}-{H}_{\gamma}u^{m}+g_{m}(u^{m}),\psi\right\rangle_{} \phi(t)\,dt=0,
\end{equation*}
for every $\psi\in \Sigma_{k}$  and  every $\phi\in C^{\infty}_{c}(\mathbb{R})$.  This means that
\begin{equation}\label{3DPL}
-\int_{\mathbb{R}}\left\langle i\, u^{m}_{}, \psi\right\rangle \phi^{\prime}(t)\,dt+\int_{\mathbb{R}}\mathfrak{t_{\gamma}}[u^{m},\psi] \phi^{}(t)\,dt+\int_{\mathbb{R}}\int_{\mathbb{R}}g_{m}(u^{m})\psi\phi\,dx\,dt=0.
\end{equation}
It follows from property (i) of Lemma \ref{3ACS} that
\begin{equation*}
\lim_{n\rightarrow \infty}\int_{\mathbb{R}}\left[-\left\langle i\, u^{m}_{}, \psi\right\rangle \phi^{\prime}(t)+\mathfrak{t_{\gamma}}[u^{m},\psi] \phi^{}(t)\right]\,dt=\int_{\mathbb{R}}\left[\left\langle - i\, u^{}_{}, \psi\right\rangle \phi^{\prime}(t)+\mathfrak{t_{\gamma}}[u^{},\psi] \phi^{}(t)\right]dt.
\end{equation*}
Next, let $h_{m}(x,t)=g_{m}(u^{m})\psi(x)\phi(t)$.  One can easily see that, by the dominated convergence theorem,  $h_{m}\rightarrow (u\, \mbox{Log}\left|u\right|^{2})\psi\phi$ in $L^{1}(\mathbb{R}\times \mathbb{R})$. Moreover, using  \eqref{3DPL}  we  obtain
\begin{equation}\label{ert}
-\int_{\mathbb{R}}\left\langle i\, u^{}_{}, \psi\right\rangle \phi^{\prime}(t)\,dt+\int_{\mathbb{R}}\mathfrak{t_{\gamma}}[u^{},\psi] \phi^{}(t)\,dt+\int_{\mathbb{R}}\int_{\mathbb{R}} u\, \mbox{Log}\left|u\right|^{2}\psi\phi\,dx\,dt=0.
\end{equation}
Since  $u\in{L^{\infty}(\mathbb{R},L^{A}(\mathbb{R}))}$ (see proof of Step 3 of \cite[Theorem 9.3.4]{CB}) we see that $u\in{L^{\infty}(\mathbb{R},W(\dot{\mathbb{R}}))}$, so that $u\in{W^{1,\infty}(\mathbb{R},W^{\prime}(\dot{\mathbb{R}}))}$ by \eqref{ert} and Lemma \ref{APEX23}. In particular, it follows from \eqref{ert},   that for all $t\in \mathbb{R}$,
\begin{equation*}
i\partial_{t}u-{H}_{\gamma}u+u\, \mbox{Log}\left|u\right|^{2}=0 \quad \mbox{in $W^{\prime}(\dot{\mathbb{R}})$}.
\end{equation*}
In addition, $u(0)=u_{0}$ by property (i) of Lemma \ref{3ACS}. Thus, we obtain that there is a solution $u\in{L^{\infty}(\mathbb{R},{W(\dot{\mathbb{R}})})}\cap W^{1,\infty}(\mathbb{R},{W^{\prime}(\dot{\mathbb{R}})})$ of \eqref{00NL} with $u(0)=u_{0}$. Moreover, arguing in the same way as in the proof of the Step 3 of \cite[Theorem 9.3.4]{CB} we deduce that
\begin{equation}
E(u(t))\leq E(u_{0}) \quad \mbox{and} \quad \left\|u^{}(t)\right\|^{2}_{L^{2}}=\left\|u_{0}\right\|^{2}_{L^{2}} \quad \text{for all} \,\, t\in \mathbb{R}.
\end{equation}
On the other hand, let $u$ and $v$ be two solutions   of \eqref{00NL} in that class. On taking the difference of the two equations and taking the $W(\dot{\mathbb{R}})-W^{\prime}(\dot{\mathbb{R}})$ duality product with $i(u-u)$, we see that
\begin{equation*}
\left\langle u_{t}-v_{t}, u-v\right\rangle=-\Im \int_{\mathbb{R}}\left(u\mathrm{Log}\left|u \right|^{2}-v\mathrm{Log}\left|v \right|^{2} \right)(\overline{u}-\overline{v})dx.
\end{equation*}
Thus, from \cite[Lemma 9.3.5]{CB} we obtain
\begin{equation*}
\left\|u(t)-v(t)\right\|^{2}_{L^{2}}\leq 8\int^{t}_{0}\left\|u(s)-v(s)\right\|^{2}_{L^{2}}ds.
\end{equation*}
Therefore, the uniqueness of the solution follows by Gronwall's Lemma. In particular, by uniqueness of solution, we deduce the conservation of energy. Later statements can be proved along the same lines as in the proof of Step 4 of \cite[Theorem 9.3.4]{CB}. Finally, the inclusion $u\in C(\mathbb{R}, W(\dot{\mathbb{R}}))\cap C^{1}(\mathbb{R}, W^{\prime}(\dot{\mathbb{R}}))$ follows from conservation laws.
\end{proof}

\section{Existence of a ground state}
\label{S:2}
This section is devoted to the proof of Theorem \ref{ESSW}.

We have divided the proof into a sequence of lemmas. Firstly we give a lemma that extends the  one-dimensional logarithmic Sobolev inequality to the space $\Sigma$.
\begin{lemma} \label{L1}
Let $u$ be any function in $\Sigma$ and $\alpha$ be any positive number. Then
\begin{equation}\label{LSA}
\int_{\mathbb{R}}\left|u(x)\right|^{2}\mathrm{Log}\left|u(x)\right|^{2}dx\leq \frac{\alpha^{2}}{\pi} \|u^{\prime} \|^{2}_{L^{2}}+\left(\mathrm{Log} (2\|u \|^{2}_{L^{2}})-\left(1+\mathrm{Log}\,\alpha\right)\right)\|u \|^{2}_{L^{2}}.
\end{equation}
\end{lemma}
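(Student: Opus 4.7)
The plan is to reduce the inequality on $\Sigma$ to the classical one-dimensional logarithmic Sobolev inequality on $\mathbb{R}$ by exploiting the decomposition $u(x)=\chi_{+}(x)u_{+}(x)+\chi_{-}(x)u_{-}(x)$ with $u_{\pm}\in H^{1}_{\rm rad}(\mathbb{R})$ recalled earlier in the excerpt. Since $u_{\pm}$ are even, evaluating each integral appearing in \eqref{LSA} in terms of $u_{\pm}$ yields the identities
\begin{align*}
\int_{\mathbb{R}}|u|^{2}\mathrm{Log}|u|^{2}\,dx &= \tfrac{1}{2}\sum_{\pm}\int_{\mathbb{R}}|u_{\pm}|^{2}\mathrm{Log}|u_{\pm}|^{2}\,dx, \\
\|u\|_{L^{2}}^{2} &= \tfrac{1}{2}\sum_{\pm}\|u_{\pm}\|_{L^{2}}^{2}, \qquad \|u'\|_{L^{2}}^{2} = \tfrac{1}{2}\sum_{\pm}\|u_{\pm}'\|_{L^{2}}^{2},
\end{align*}
in analogy with \eqref{ADE}. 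These three identities are where the factor of $2$ inside $\mathrm{Log}(2\|u\|_{L^{2}}^{2})$ will enter.

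Next I would invoke the classical Euclidean logarithmic Sobolev inequality on $\mathbb{R}$ (see e.g.\ \cite{CB}): for every $f\in H^{1}(\mathbb{R})$ and every $\alpha>0$,
\begin{equation*}
\int_{\mathbb{R}}|f|^{2}\mathrm{Log}|f|^{2}\,dx \;\leq\; \frac{\alpha^{2}}{\pi}\|f'\|_{L^{2}}^{2}+\bigl(\mathrm{Log}\|f\|_{L^{2}}^{2}-(1+\mathrm{Log}\,\alpha)\bigr)\|f\|_{L^{2}}^{2}.
\end{equation*}
Applying this to $f=u_{+}$ and to $f=u_{-}$, summing and dividing by $2$, and using the three identities above, the gradient and constant terms already assemble into exactly $\tfrac{\alpha^{2}}{\pi}\|u'\|_{L^{2}}^{2}$ and $-(1+\mathrm{Log}\,\alpha)\|u\|_{L^{2}}^{2}$.

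The only step that is not purely algebraic is controlling the remainder
\begin{equation*}
\tfrac{1}{2}\sum_{\pm}\bigl(\mathrm{Log}\|u_{\pm}\|_{L^{2}}^{2}\bigr)\|u_{\pm}\|_{L^{2}}^{2}.
\end{equation*}
For this I would use monotonicity of $\mathrm{Log}$: since $\|u_{\pm}\|_{L^{2}}^{2}\leq\|u_{+}\|_{L^{2}}^{2}+\|u_{-}\|_{L^{2}}^{2}=2\|u\|_{L^{2}}^{2}$, the factor $\mathrm{Log}\|u_{\pm}\|_{L^{2}}^{2}$ is bounded above by $\mathrm{Log}(2\|u\|_{L^{2}}^{2})$, and multiplication by the nonnegative weights $\|u_{\pm}\|_{L^{2}}^{2}$ preserves the inequality. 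Summing and dividing by $2$ gives the desired bound $\mathrm{Log}(2\|u\|_{L^{2}}^{2})\,\|u\|_{L^{2}}^{2}$, and combining everything yields \eqref{LSA}. The one subtlety to be careful about is the degenerate case $u_{+}\equiv 0$ or $u_{-}\equiv 0$, handled by the usual convention $0\cdot\mathrm{Log}\,0=0$; other than that the argument is straightforward bookkeeping, so I do not anticipate a serious obstacle.
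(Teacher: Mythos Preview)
Your proposal is correct and follows exactly the approach the paper indicates: the paper's proof consists of a single sentence stating that the lemma ``follows immediately from the standard logarithmic Sobolev inequality on $H^{1}(\mathbb{R})$ and the decomposition in \eqref{ADE},'' which is precisely what you carry out in detail. Your explicit handling of the logarithmic remainder term via the monotonicity bound $\|u_{\pm}\|_{L^{2}}^{2}\leq 2\|u\|_{L^{2}}^{2}$ and your remark on the degenerate case $u_{\pm}\equiv 0$ simply fill in the bookkeeping that the paper leaves to the reader.
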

\begin{proof}
The lemma  follows immediately from the standard  logarithmic Sobolev inequality on $H^{1}(\mathbb{R})$ (see \cite[Theorem 8.14]{ELL}) and the decomposition in \eqref{ADE}.
\end{proof}
\begin{lemma}\label{L2}
Let $\gamma>0$ and $\omega\in \mathbb{R}$. Then, the quantity $d_{\gamma}(\omega)$ is positive and satisfies
\begin{equation}\label{EA}
d_{\gamma}(\omega)\geq  \frac{1}{4}\sqrt{\frac{\pi}{2}}e^{\omega+1}e^{-\frac{8}{\gamma ^{2}}}.
\end{equation}
\end{lemma}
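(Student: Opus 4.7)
The plan is to combine the Nehari identity $I_{\omega,\gamma}(u)=0$, the logarithmic Sobolev inequality from Lemma \ref{L1} with the distinguished choice $\alpha=\sqrt{\pi/2}$, and the elementary trace bound $|v(0)|^{2}\le 2\|v\|_{L^{2}}\|v'\|_{L^{2}}$ applied on each half-line, to produce a quadratic inequality in $\|u'\|_{L^{2}}$ whose very solvability will force the stated lower bound on $\|u\|^{2}_{L^{2}}$. Because the second expression for $d_{\gamma}(\omega)$ in \eqref{MPE} writes it as half the infimum of $\|u\|^{2}_{L^{2}}$ over the Nehari constraint, \eqref{EA} and positivity will both follow from this lower bound.

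First I would rewrite the constraint $I_{\omega,\gamma}(u)=0$ using the definition of $\mathfrak{t}_{\gamma}$ as
\[
\int_{\mathbb{R}}|u|^{2}\log|u|^{2}\,dx
=\|u'\|^{2}_{L^{2}}-\gamma^{-1}|u(0+)-u(0-)|^{2}+\omega\|u\|^{2}_{L^{2}},
\]
and then apply Lemma \ref{L1} with $\alpha=\sqrt{\pi/2}$ to obtain
\[
\int_{\mathbb{R}}|u|^{2}\log|u|^{2}\,dx
\le\tfrac{1}{2}\|u'\|^{2}_{L^{2}}+\bigl(\log(2\|u\|^{2}_{L^{2}})-1-\tfrac{1}{2}\log(\pi/2)\bigr)\|u\|^{2}_{L^{2}}.
\]
Subtracting the first display from the second cancels $\omega\|u\|^{2}_{L^{2}}$ and leaves exactly half of the kinetic energy in the upper bound, yielding
\[
\tfrac{1}{2}\|u'\|^{2}_{L^{2}}\le\gamma^{-1}|u(0+)-u(0-)|^{2}+(C-\omega)\|u\|^{2}_{L^{2}},
\]
where $C:=\log(2\|u\|^{2}_{L^{2}})-1-\tfrac{1}{2}\log(\pi/2)$. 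Applying the trace bound on each side of $0$ together with Cauchy--Schwarz on $\mathbb{R}^{2}$ gives $|u(0+)-u(0-)|^{2}\le 4\|u\|_{L^{2}}\|u'\|_{L^{2}}$; substituting and writing $Y:=\|u'\|_{L^{2}}$ produces the quadratic inequality
\[
Y^{2}-\tfrac{8}{\gamma}\|u\|_{L^{2}}\,Y-2(C-\omega)\|u\|^{2}_{L^{2}}\le 0.
\]

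Since $Y=\|u'\|_{L^{2}}\ge 0$ is an actual solution, the quadratic above must possess real roots, so its discriminant is non-negative:
\[
\tfrac{64}{\gamma^{2}}\|u\|^{2}_{L^{2}}+8(C-\omega)\|u\|^{2}_{L^{2}}\ge 0.
\]
Dividing by $8\|u\|^{2}_{L^{2}}>0$ reduces this to $C\ge\omega-8/\gamma^{2}$; unfolding the definition of $C$ and exponentiating gives
\[
\|u\|^{2}_{L^{2}}\ge\tfrac{1}{2}\sqrt{\pi/2}\,e^{\omega+1}\,e^{-8/\gamma^{2}}.
\]
Taking the infimum over the Nehari manifold and using \eqref{MPE} yields \eqref{EA}, and in particular $d_{\gamma}(\omega)>0$.

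The main obstacle is identifying the right value of $\alpha$ in Lemma \ref{L1}: only $\alpha=\sqrt{\pi/2}$ causes the subtraction step to leave precisely one half of $\|u'\|^{2}_{L^{2}}$, which can then be paired with the trace control $|u(0+)-u(0-)|^{2}\le 4\|u\|_{L^{2}}\|u'\|_{L^{2}}$ to produce a genuine quadratic in $\|u'\|_{L^{2}}$. The discriminant condition on that quadratic is exactly what generates the sharp exponent $-8/\gamma^{2}$ in the final bound.
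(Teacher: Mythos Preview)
Your proof is correct and essentially matches the paper's argument: both use the Nehari identity, Lemma~\ref{L1} with $\alpha=\sqrt{\pi/2}$, and the half-line trace estimate $|u(0\pm)|^{2}\le 2\|u\|_{L^{2}(\mathbb{R}^{\pm})}\|u'\|_{L^{2}(\mathbb{R}^{\pm})}$ to reach the same lower bound on $\|u\|_{L^{2}}^{2}$. The only cosmetic difference is that the paper applies Young's inequality with a tuned parameter to the trace term (obtaining $\gamma^{-1}|u(0+)-u(0-)|^{2}\le \tfrac{8}{\gamma^{2}}\|u\|_{L^{2}}^{2}+\tfrac{1}{2}\|u'\|_{L^{2}}^{2}$ directly), while you keep the bilinear term and extract the same information from the discriminant of the resulting quadratic in $\|u'\|_{L^{2}}$; these are equivalent optimizations.
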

\begin{proof}
Let $u\in W(\dot{\mathbb{R}^{}}) \setminus  \left\{0 \right\}$ be such that  $I_{\omega,\gamma}(u)=0$. By H\"{o}lder and Sobolev inequalities  and the decomposition in \eqref{ADE}, we have
\begin{align}
\frac{1}{{\gamma}}\left|u(0+)-u(0-)\right|^{2}&\leq \frac{2}{{\gamma}}\left\{\left|u(0+)\right|^{2}+\left|u(0-)\right|^{2} \right\}\nonumber\\
&\leq \frac{1}{{\gamma}}\left\{\frac{4}{\gamma}\left[\|u_{+}\|^{2}_{L^{2}}+\|u_{-}\|^{2}_{L^{2}}\right] +\frac{\gamma}{4}\left[\|u^{\prime}_{+}\|^{2}_{L^{2}}+\|u^{\prime}_{-}\|^{2}_{L^{2}}\right]\right\}\nonumber\\
&=\frac{8}{\gamma^{2}}\left\|u\right\|^{2}_{L^{2}}+\frac{1}{2}\left\|u^{\prime}\right\|^{2}_{L^{2}}.\label{EA2}
\end{align}
Now, from  $I_{\omega,\gamma}(u)=0$, the logarithmic Sobolev inequality \eqref{LSA} with $\alpha=\sqrt{\frac{\pi}{2}}$ and \eqref{EA2}, it follows
\begin{equation*}
 \left(\omega+1+\mbox{Log}\left(\sqrt{\frac{\pi}{2}}\right)-\frac{8}{\gamma ^{2}}\right)\left\|u\right\|^{2}_{L^{2}}\leq \left(\mbox{Log}\left(2\left\|u\right\|^{2}_{L^{2}}\right) \right)\left\|u\right\|^{2}_{L^{2}}.
\end{equation*}
Thus, $\left\|u\right\|^{2}_{L^{2}}\geq \frac{1}{2}\sqrt{\frac{\pi}{2}} e^{\omega+1}e^{-\frac{8}{\gamma ^{2}}}$. Finally, by the definition of $d_{\gamma}(\omega)$ given in \eqref{MPE}, we get \eqref{EA}.
\end{proof}

Before stating our next lemma we recall a well-known result on the logarithmic Schr\"{o}dinger equation in the absence of the delta prime potential: namely, the  set of solutions of stationary problem
\begin{equation}
 -\partial^{2}_{x} \varphi+\omega \varphi-\varphi\,\mathrm{Log}\left|\varphi \right|^{2}=0, \quad \text{ $x\in\mathbb{R}$,\, $\omega\in\mathbb{R}$,\, $\varphi\in H^{1}(\mathbb{R})\cap L^{A}(\mathbb{R})$},
\end{equation}
is given by $\bigl\{e^{i\theta}\phi_{\omega}(\cdot-y); \theta\in\mathbb{R},  y\in\mathbb{R} \bigl\}$ (see e.g. \cite[Appendix D]{CAS}), where
\begin{equation}\label{UERF}
\phi_{\omega}(x)=e^{\frac{\omega+1}{2}}e^{-\frac{1}{2}x^{2}}.
\end{equation}
In addition, $\phi_{\omega}(x)$ is the only minimizer (modulo translation and phase) of the problem
\begin{align}
\begin{split}\label{MPEaa}
d(\omega)&={\inf}\left\{S_{\omega}(u):\, u\in H^{1}(\mathbb{R})\cap L^{A}(\mathbb{R}) \setminus  \left\{0 \right\},  I_{\omega}(u)=0\right\} \\
&=\frac{1}{2}\,{\inf}\bigl\{\left\|u\right\|_{L^{2}}^{2}:u\in  H^{1}(\mathbb{R})\cap L^{A}(\mathbb{R}) \setminus \left\{0 \right\},  I_{\omega}(u)\leq 0 \bigl\},
\end{split}
\end{align}
where
\begin{align*}
 S_{\omega}(u)&=\frac{1}{2} \|\partial^{}_{x}u \|^{2}_{L^{2}}+ \frac{\omega+1}{2}\|u \|^{2}_{L^{2}}-\frac{1}{2}\int_{\mathbb{R}}\left|u\right|^{2}\mbox{Log}\left|u\right|^{2}dx,\\
 I_{\omega}(u)&= \|\partial^{}_{x}u \|^{2}_{L^{2}}+ \omega\,\|u \|^{2}_{L^{2}}-\int_{\mathbb{R}}\left|u\right|^{2}\mbox{Log}\left|u\right|^{2}dx.\end{align*}
Moreover $d(\omega)=e^{\omega+1}\sqrt{\pi}/2$.  For the proof of this result we refer to A. H. Ardila \cite{AHA1} (see also Cazenave and Lions \cite[Remark II.3]{CALO}).

\begin{lemma}\label{L5}
The set of the minimizers for the problem
\begin{equation*}
d^{0}(\omega)= \mathrm{inf}\bigl\{S_{\omega}(u):u\in W(\dot{\mathbb{R}})  \verb'\'  \left\{0 \right\},  I_{\omega}(u)=0\bigl\},
\end{equation*}
is given by $\left\{e^{i\theta}\chi_{+}\phi_{\omega},\,e^{i\theta}\chi_{-}\phi_{\omega}:\, \theta\in\mathbb{R} \right\}$, where $\phi_{\omega}$ is defined in \eqref{UERF}.
\end{lemma}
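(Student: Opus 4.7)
The plan is to reduce the ``split'' minimization on $W(\dot{\mathbb{R}})$ to the unperturbed problem \eqref{MPEaa} by exploiting the decomposition \eqref{ADE}. Given $u\in W(\dot{\mathbb{R}})$, write $u=\chi_{+}u_{+}+\chi_{-}u_{-}$ with $u_{\pm}\in H^{1}_{\rm rad}(\mathbb{R})\cap L^{A}(\mathbb{R})$. A direct computation using \eqref{ADE} and the change of variables $x\mapsto -x$ (to exploit that $u_{\pm}$ are even) gives
\begin{equation*}
S_{\omega}(u)=\tfrac{1}{2}\bigl(S_{\omega}(u_{+})+S_{\omega}(u_{-})\bigr),\qquad I_{\omega}(u)=\tfrac{1}{2}\bigl(I_{\omega}(u_{+})+I_{\omega}(u_{-})\bigr).
\end{equation*}
Moreover, the identity $2S_{\omega}(v)-I_{\omega}(v)=\|v\|_{L^{2}}^{2}$ shows that on the constraint $I_{\omega}(u)=0$ we have $S_{\omega}(u)=\tfrac{1}{2}\|u\|_{L^{2}}^{2}=\tfrac{1}{4}(\|u_{+}\|_{L^{2}}^{2}+\|u_{-}\|_{L^{2}}^{2})$, so the problem reduces to minimizing $\|u_{+}\|_{L^{2}}^{2}+\|u_{-}\|_{L^{2}}^{2}$ under $I_{\omega}(u_{+})+I_{\omega}(u_{-})=0$.

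Next I invoke the known facts about the unperturbed problem \eqref{MPEaa}: $d(\omega)=e^{\omega+1}\sqrt{\pi}/2$, the set of minimizers is $\{e^{i\theta}\phi_{\omega}(\cdot-y):\theta,y\in\mathbb{R}\}$, and $2d(\omega)=\inf\{\|v\|_{L^{2}}^{2}:v\in H^{1}(\mathbb{R})\cap L^{A}(\mathbb{R})\setminus\{0\},\,I_{\omega}(v)\le 0\}$. The scaling $v\mapsto \lambda v$ satisfies $I_{\omega}(\lambda v)=\lambda^{2}I_{\omega}(v)-\lambda^{2}\log(\lambda^{2})\|v\|_{L^{2}}^{2}$, so if $v\neq 0$ has $I_{\omega}(v)<0$ one can choose $\lambda\in(0,1)$ with $I_{\omega}(\lambda v)=0$, whence $\|v\|_{L^{2}}^{2}>\|\lambda v\|_{L^{2}}^{2}\ge 2d(\omega)$. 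Therefore the infimum is \emph{strictly} $>2d(\omega)$ whenever $I_{\omega}(v)<0$. Also, among the minimizers $e^{i\theta}\phi_{\omega}(\cdot-y)$, only those with $y=0$ are even, since $\phi_{\omega}$ is strictly decreasing on $(0,\infty)$.

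Now split into cases. If $u_{-}=0$ (resp.\ $u_{+}=0$), then $I_{\omega}(u_{+})=0$, so $\|u_{+}\|_{L^{2}}^{2}\ge 2d(\omega)$ with equality iff $u_{+}=e^{i\theta}\phi_{\omega}$ (the unique even minimizer), yielding $S_{\omega}(u)\ge d(\omega)/2$ with equality iff $u=e^{i\theta}\chi_{+}\phi_{\omega}$ (resp.\ $u=e^{i\theta}\chi_{-}\phi_{\omega}$). If both $u_{\pm}\neq 0$, then $I_{\omega}(u_{+})=-I_{\omega}(u_{-})$; either both vanish, in which case $\|u_{\pm}\|_{L^{2}}^{2}\ge 2d(\omega)$ and so $\|u\|_{L^{2}}^{2}\ge 2d(\omega)>d(\omega)$, or, WLOG, $I_{\omega}(u_{-})<0$, in which case $\|u_{-}\|_{L^{2}}^{2}>2d(\omega)$ by the strict bound above, so again $\|u\|_{L^{2}}^{2}>d(\omega)$. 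In either sub-case, $S_{\omega}(u)>d(\omega)/2$, so such $u$ is not a minimizer. Combining the cases proves $d^{0}(\omega)=d(\omega)/2$, attained precisely on $\{e^{i\theta}\chi_{+}\phi_{\omega},e^{i\theta}\chi_{-}\phi_{\omega}:\theta\in\mathbb{R}\}$.

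The main obstacle is the strict inequality $\|v\|_{L^{2}}^{2}>2d(\omega)$ when $I_{\omega}(v)<0$, which is essential to rule out the ``both nonzero halves'' scenario; this is handled by the scaling argument above and relies on the logarithmic structure of $I_{\omega}$ through the term $-\lambda^{2}\log(\lambda^{2})\|v\|_{L^{2}}^{2}$.
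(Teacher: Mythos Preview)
Your proof is correct and follows essentially the same route as the paper: decompose $u=\chi_{+}u_{+}+\chi_{-}u_{-}$ with $u_{\pm}$ even, reduce to the unperturbed problem \eqref{MPEaa}, and use that the only even minimizer of \eqref{MPEaa} is $e^{i\theta}\phi_{\omega}$. The only noteworthy difference is that the paper first relaxes the constraint to $I_{\omega}(u)\le 0$ (so that one half automatically satisfies $I_{\omega}\le 0$), whereas you work directly with $I_{\omega}(u)=0$ and split into cases; your scaling argument for the strict inequality when $I_{\omega}(v)<0$ is fine, though in fact unnecessary in the ``both halves nonzero'' case since $\|u_{+}\|_{L^{2}}^{2}>0$ already yields $\|u\|_{L^{2}}^{2}=\tfrac12\|u_{+}\|_{L^{2}}^{2}+\tfrac12\|u_{-}\|_{L^{2}}^{2}>\tfrac12\|u_{-}\|_{L^{2}}^{2}\ge d(\omega)$.
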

\begin{proof}
We use the argument in \cite[Lemma 4]{ADNP}. First, we remark that the following variational problem is equivalent to $d^{0}(\omega)$:
\begin{equation}\label{3LS}
 d^{1}(\omega)=  \frac{1}{2}\mbox{\rm inf} \bigl\{\left\|u\right\|_{L^{2}}^{2} :u\in W(\dot{\mathbb{R}})  \verb'\'  \left\{0 \right\},  I_{\omega}(u)\leq 0\bigl\}.
\end{equation}
Arguing as in Lemma \ref{L2} we can show that the quantity $d^{1}(\omega)$ is positive. Now, let $u\in H_{rad}^{1}(\mathbb{R})\cap L^{A}(\mathbb{R})$ be such that $I_{\omega}(\chi_{+}u)\leq 0$. Then, since $u$ is even, we have $I_{\omega}(u)=2\,I_{\omega}(\chi_{+}u)\leq0$. Thus, from \eqref{MPEaa}, we see that
\begin{equation}\label{3LSW}
 \frac{1}{2}\left\|\chi_{+}u\right\|^{2}_{L^{2}}=\frac{1}{4}\left\|u\right\|^{2}_{L^{2}}\geq \frac{1}{4}\left\|\phi_{\omega}\right\|^{2}_{L^{2}}=\frac{1}{2}\left\|\chi_{+}\phi_{\omega}\right\|^{2}_{L^{2}},
\end{equation}
and  $I_{\omega}(\chi_{+}\phi_{\omega})=0$. Therefore, $\chi_{+}\phi_{\omega}$ is a minimizer of $L^{2}(\mathbb{R})$-norm among the functions of $W(\dot{\mathbb{R}})$, supported on $\mathbb{R}^{+}$ and satisfying $I_{\omega}\leq 0$. We observe that the equality in \eqref{3LSW} is satisfied  if and only if $u(x)=\phi_{\omega}(x)$ for all $x\in \mathbb{R}$ (modulo  phase).
Indeed, suppose we have the equality in \eqref{3LSW}. Since $u$ satisfies  $I_{\omega}(u)\leq0$, we have
\begin{equation*}
S_{\omega}(u)=\frac{1}{2}I_{\omega}(u)+\frac{1}{2}\left\|u\right\|^{2}_{L^{2}}\leq \frac{1}{2}\left\|u\right\|^{2}_{L^{2}}=\frac{1}{2}\left\|\phi_{\omega}\right\|^{2}_{L^{2}}=S_{\omega}(\phi_{\omega}),
\end{equation*}
and thus $u(x)=\phi_{\omega}(x-y)$ for some $y\in \mathbb{R}$. Moreover, since $u$ is even, we have that $y=0$.

Next, we recall that for any function $u\in W(\dot{\mathbb{R}})$ there exists a unique couple of functions $u_{+}$, $u_{-}\in H_{rad}^{1}(\mathbb{R})\cap L^{A}(\mathbb{R})$ such that $u=\chi_{+} u_{+}+ \chi_{-} u_{-}$. Thus, if $I_{\omega}(u)\leq0$, then one of the following alternative holds: $I_{\omega}(\chi_{+}u)\leq0$  or $I_{\omega}(\chi_{-}u)\leq0$. Furthermore, using \eqref{3LSW} we obtain
\begin{equation}\label{UIL}
 \frac{1}{2}\left\|\chi_{+} u_{+}\right\|^{2}_{L^{2}}\geq \frac{1}{2}\left\|\chi_{+}\phi_{\omega}\right\|^{2}_{L^{2}} \,\,\,\, \text{or}\,\,\,\, \frac{1}{2}\left\|\chi_{-} u_{-}\right\|^{2}_{L^{2}}\geq \frac{1}{2}\left\|\chi_{-}\phi_{\omega}\right\|^{2}_{L^{2}}.
\end{equation}
In addition, the equality in \eqref{UIL} is satisfied  if and only if $\left|u_{+}\right|=\phi_{\omega}$ or $\left|u_{-}\right|=\phi_{\omega}$, which concludes the proof.
\end{proof}

\begin{lemma} \label{L34}
Let $\gamma>0$. The following  inequality holds for any $\omega\in \mathbb{R}$:
\begin{equation}\label{EIN}
 d_{\gamma}(\omega)<d^{0}(\omega).
\end{equation}
\end{lemma}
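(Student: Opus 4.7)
The plan is to exhibit an explicit test function on the Nehari manifold of $I_{\omega,\gamma}$ whose $L^2$-norm squared is strictly smaller than $\|\chi_+\phi_\omega\|^2_{L^2}$. The natural candidate is a rescaling of the half-supported minimizer $\chi_+\phi_\omega$ produced by Lemma \ref{L5}, which belongs to $W(\dot{\mathbb{R}})$ and carries a jump at the origin, namely $(\chi_+\phi_\omega)(0+)=e^{(\omega+1)/2}$ and $(\chi_+\phi_\omega)(0-)=0$. Since $I_\omega(\chi_+\phi_\omega)=0$, using $S_\omega = \tfrac12 I_\omega + \tfrac12 \|\cdot\|_{L^2}^2$ yields $d^0(\omega)=S_\omega(\chi_+\phi_\omega)=\tfrac{1}{2}\|\chi_+\phi_\omega\|^2_{L^2}$.

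Next I evaluate $I_{\omega,\gamma}$ at this function. Because $\mathfrak{t}_\gamma[u]$ differs from the unperturbed kinetic energy by the negative defect term $-\gamma^{-1}|u(0+)-u(0-)|^2$, and the jump of $\chi_+\phi_\omega$ at zero is $\phi_\omega(0)=e^{(\omega+1)/2}$,
\[
I_{\omega,\gamma}(\chi_+\phi_\omega) = I_\omega(\chi_+\phi_\omega) - \gamma^{-1}|\phi_\omega(0)|^2 = -\gamma^{-1} e^{\omega+1} < 0.
\]
Thus $\chi_+\phi_\omega$ lies in the set $\{I_{\omega,\gamma}<0\}$, and it remains to bring it onto the Nehari set by rescaling. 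A direct computation with the logarithmic nonlinearity gives
\[
I_{\omega,\gamma}(\lambda u) = \lambda^2 I_{\omega,\gamma}(u) - \lambda^2 \log(\lambda^2)\,\|u\|^2_{L^2},
\]
so choosing $\lambda_0^2 = \exp\!\left(I_{\omega,\gamma}(\chi_+\phi_\omega)/\|\chi_+\phi_\omega\|^2_{L^2}\right) \in (0,1)$ yields $I_{\omega,\gamma}(\lambda_0 \chi_+\phi_\omega)=0$.

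Finally, invoking the equivalent formulation $d_\gamma(\omega) = \tfrac{1}{2}\inf\{\|u\|^2_{L^2} : I_{\omega,\gamma}(u)=0\}$ from \eqref{MPE}, I get
\[
d_\gamma(\omega) \leq \tfrac{1}{2}\|\lambda_0\chi_+\phi_\omega\|^2_{L^2} = \tfrac{\lambda_0^2}{2}\|\chi_+\phi_\omega\|^2_{L^2} < \tfrac{1}{2}\|\chi_+\phi_\omega\|^2_{L^2} = d^0(\omega),
\]
which is the desired strict inequality. The only points to check carefully are that $\chi_+\phi_\omega$ genuinely lies in $W(\dot{\mathbb{R}})$ (immediate, since $H^1(\mathbb{R}\setminus\{0\})$ accommodates jumps at the origin and $\phi_\omega\in L^A(\mathbb{R})$) and the scaling identity for $I_{\omega,\gamma}$ above; no real obstacle arises. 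The proof encodes the heuristic that an attractive $\delta'$-interaction lowers the action of any profile with nonzero jump at zero, precisely the mechanism that forces ground states of \eqref{GS} to be sign-changing across the defect.
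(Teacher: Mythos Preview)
Your proof is correct and follows essentially the same approach as the paper: both use the half-supported minimizer $\chi_{+}\phi_{\omega}$ from Lemma~\ref{L5}, observe that $I_{\omega,\gamma}(\chi_{+}\phi_{\omega})=-\gamma^{-1}e^{\omega+1}<0$, and rescale to land on the Nehari manifold with a strictly smaller $L^{2}$-norm. The only difference is cosmetic: you write down the explicit scaling factor $\lambda_{0}^{2}=\exp\bigl(I_{\omega,\gamma}(\chi_{+}\phi_{\omega})/\|\chi_{+}\phi_{\omega}\|_{L^{2}}^{2}\bigr)$, whereas the paper simply asserts the existence of some $\lambda\in(0,1)$ with $I_{\omega,\gamma}(\lambda\chi_{+}\phi_{\omega})=0$.
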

\begin{proof}
Since we have
\begin{equation*}
{I}_{\omega,\gamma}\left(\chi_{+}\phi_{\omega}\right)=I_{\omega}\left(\chi_{+}\phi_{\omega}\right)-\gamma^{-1}e^{\omega+1}=-\gamma^{-1}e^{\omega+1}<0,
\end{equation*}
we infer that there exist $\lambda\in\left(0,1\right)$ such that  ${I}_{\omega,\gamma}\left(\lambda\chi_{+}\phi_{\omega}\right)=0$. Thus, from Lemma \ref{L5} and by the definition of ${d}_{\gamma}(\omega)$ we have
\begin{equation*}
{d}_{\gamma}(\omega)\leq {S}_{\omega,\gamma}\left(\lambda\chi_{+}\phi_{\omega}\right)=\frac{\lambda^{2}}{2}\left\|\chi_{+}\phi_{\omega}\right\|^{2}_{L^{2}(\mathbb{R})}<\frac{1}{2}\left\|\chi_{+}\phi_{\omega}\right\|^{2}_{L^{2}(\mathbb{R})}=d^{0}(\omega),
\end{equation*}
and the lemma is proved.
\end{proof}

The proof of the following lemma can be found in \cite[Lemma 4.10]{AnguloArdila2016}, and is presented here for the sake of completeness.

\begin{lemma} \label{L4}
Let  $\left\{u_{n}\right\}$ be a bounded sequence in $W(\dot{\mathbb{R}})$ such that $u_{n}\rightarrow u$ a.e. in $\mathbb{R}$. Then $u\in W(\dot{\mathbb{R}})$ and
\begin{equation*}
\lim_{n\rightarrow \infty}\int_{\mathbb{R}}\left\{\left|u_{n}\right|^{2}\mathrm{Log}\left|u_{n}\right|^{2}dx-\left|u_{n}-u\right|^{2}\mathrm{Log}\left|u_{n}-u\right|^{2}\right\}dx=\int_{\mathbb{R}}\left|u\right|^{2}\mathrm{Log}\left|u\right|^{2}dx.
\end{equation*}
\end{lemma}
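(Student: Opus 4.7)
The plan is to decompose the nonlinear density $F(z) = |z|^{2}\mathrm{Log}|z|^{2}$ as $F(z) = B(|z|) - A(|z|)$, exactly as in \eqref{IFD}, and to establish a Brezis--Lieb type identity separately for the $A$-part and the $B$-part. Adding the two identities yields the statement of the lemma. The preliminary observation that $u \in W(\dot{\mathbb{R}})$ is immediate from the assumptions: since $\{u_{n}\}$ is bounded in $\Sigma \cap L^{A}(\mathbb{R})$ and $u_{n} \to u$ a.e., Fatou's lemma applied to $|u_{n}'|^{2}$, $|u_{n}|^{2}$ and $A(|u_{n}|)$ shows that $u \in \Sigma$ and $A(|u|) \in L^{1}(\mathbb{R})$, i.e.\ $u \in L^{A}(\mathbb{R})$.

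For the $B$-part, I would exploit the explicit expression in \eqref{IFD}: $B(s) = 0$ for $0 \leq s \leq e^{-3}$ and $B(s)$ grows at most quadratically for $s \geq e^{-3}$. Writing $b(z) = B(|z|)z/|z|^{2}$, the mean-value estimate $|b(z_{1}) - b(z_{2})| \leq C(|z_{1}| + |z_{2}|)^{\varepsilon}|z_{1} - z_{2}|$ used already in Lemma \ref{APEX23}, together with the $H^{1}(\mathbb{R}\setminus\{0\})$-boundedness of $\{u_{n}\}$ (and hence $L^{p}$-boundedness for every $2 \leq p < \infty$), provides integrable majorants. The classical Brezis--Lieb scheme then applies: from $u_{n} \to u$ a.e.\ and the elementary bound
\[
\big| B(|u_{n}|) - B(|u_{n}-u|) - B(|u|) \big| \leq \varepsilon\, \big(|u_{n}-u|^{2}+|u_{n}-u|\big) + C_{\varepsilon}\, \big(|u|^{2} + |u|\big),
\]
the dominated convergence theorem yields the desired identity for $B$.

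For the $A$-part, $A$ is a nonnegative convex $\Delta_{2}$-regular Young function, and $\{A(|u_{n}|)\}$ is bounded in $L^{1}(\mathbb{R})$. The Brezis--Lieb argument can be repeated in this Orlicz setting: convexity and continuity of $A$, combined with the $\Delta_{2}$-condition, imply
\[
\big| A(|u_{n}|) - A(|u_{n}-u|) - A(|u|) \big| \leq \varepsilon\, A(|u_{n}-u|) + C_{\varepsilon}\, A(|u|)\quad \text{a.e.},
\]
and, using that $A(|u_{n}-u|)$ stays bounded in $L^{1}$ together with the a.e.\ convergence of the left-hand side to $0$, Fatou's lemma yields
\[
\limsup_{n\to\infty} \int_{\mathbb{R}} \big| A(|u_{n}|) - A(|u_{n}-u|) - A(|u|) \big|\, dx \leq \varepsilon\, M,
\]
for some $M$ independent of $\varepsilon$; letting $\varepsilon \to 0$ gives the Brezis--Lieb identity for $A$. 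I expect this $A$-part to be the main technical obstacle, because the logarithmic singularity of $A$ near the origin precludes a direct use of the classical $L^{p}$ Brezis--Lieb lemma and forces one to invoke the $\Delta_{2}$-property of $A$ and work inside the Orlicz space $L^{A}(\mathbb{R})$. Subtracting the $A$-identity from the $B$-identity then produces precisely the limit claimed for $F = B - A$.
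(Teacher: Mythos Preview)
Your proposal is correct and follows essentially the same route as the paper: split $F = B - A$ via \eqref{IFD}, show $u\in W(\dot{\mathbb R})$ by Fatou, and run a Br\'ezis--Lieb argument separately for the convex integrands $A$ and $B$. The paper simply invokes \cite[Theorem~2 and Examples~(b)]{LBL} (valid for any nonnegative convex $j$ with $j(0)=0$) for both $A$ and $B$, whereas you spell out the $\varepsilon$--$C_\varepsilon$ splitting and the $\Delta_2$-property for $A$ by hand; these are the same argument at different levels of detail. One small inaccuracy: $B(s)$ grows like $s^{2}\mathrm{Log}\,s$, not quadratically, but this is harmless here since the uniform $\Sigma$-bound gives a uniform $L^\infty$-bound on $u_n$ (or one may use $B(s)\leq C_\varepsilon s^{2+\varepsilon}$ together with the $H^{1}$-Sobolev embedding, exactly as in \eqref{DB}).
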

\begin{proof}
We first recall that, by \eqref{IFD}, $\left|z\right|^{2}\mbox{Log}\left|z\right|^{2}=A(\left|z\right|)-B(\left|z\right|)$  for every  $z\in\mathbb{C}$.  By the weak-lower semicontinuity of the ${L^{2}(\mathbb{R})}$-norm and Fatou lemma we have $u\in W(\dot{\mathbb{R}})$.  It is clear that the sequence $\left\{u_{n}\right\}$ is  bounded in ${L^{A}(\mathbb{R})}$. Since $A$ is convex and increasing function with $A(0)=0$, it is follows from Br\'ezis-Lieb lemma \cite[Theorem 2 and Examples (b)]{LBL} that
\begin{equation}\label{AC}
\lim_{n\rightarrow \infty}\int_{\mathbb{R}} \left| A(\left|u_{n}\right|)-A(\left|u_{n}-u\right|)- A(\left|u_{}\right|)\right|dx=0.
\end{equation}
On the other hand, thanks to the continuous embedding $W(\dot{\mathbb{R}})\hookrightarrow \Sigma$, we have that the sequence $\left\{u_{n}\right\}$ is also bounded in $\Sigma$. An easy computations shows that the function $B$ is convex, increasing and  nonnegative with
$B(0)=0$. Furthermore, by H\"{o}lder and Sobolev inequalities, for any $u$, $v\in \Sigma$  we have that (see \cite[Lemma 1.1]{CL})
\begin{equation}\label{DB}
\int_{\mathbb{R}}\left|B(\left|u(x)\right|)- B(\left|v(x)\right|)\right|dx\leq C\left(1+ \left\|u\right\|^{2}_{\Sigma}+ \left\|v\right\|^{2}_{\Sigma} \right)\left\|u-v\right\|_{{L^{2}}}.
\end{equation}
Then, the function $B$ satisfies the hypotheses of \cite[Theorem 2 and Examples (b)]{LBL} and therefore
\begin{equation}\label{BC}
\lim_{n\rightarrow \infty}\int_{\mathbb{R}} \left| B(\left|u_{n}\right|)-B(\left|u_{n}-u\right|)- B(\left|u_{}\right|)\right|dx=0.
\end{equation}
Thus the result follows from \eqref{AC} and  \eqref{BC}.
\end{proof}

\begin{proof}[ \it {Proof of Theorem \ref{ESSW}}]
We use the argument in \cite[Theorem 4.4]{AnguloArdila2016}(see also \cite{ADNP,FO}). First, every minimizing sequence of \eqref{MPE} is bounded in $ W(\dot{\mathbb{R}})$. Let $\left\{ u_{n}\right\}$ be a minimizing sequence. We remark that the sequence $\left\{ u_{n}\right\}$ is bounded in $L^{2}(\mathbb{R})$. Now, by \eqref{EA2}, the logarithmic Sobolev inequality \eqref{LSA} and recalling that $I_{\omega,\gamma}(u_{n})=0$, we see for $\alpha>0$,
\begin{equation*}
\left(\frac{1}{2}-\frac{\alpha^{2}}{\pi}\right)\left\|u_{n}^{\prime}\right\|^{2}_{L^{2}}\leq \left(\mbox{Log}\left(\frac{e^{\frac{8}{\gamma^{2}}}e^{-\left(\omega+1\right)}}{\alpha^{}}\right)\right)\left\|u_{n}\right\|^{2}_{L^{2}} +\mbox{Log}\left(2\left\|u_{n}\right\|^{2}_{L^{2}}\right) \left\|u_{n}\right\|^{2}_{L^{2}}.
\end{equation*}
Taking $\alpha>0$ sufficiently small, we have that $\left\{ u_{n}\right\}$ is bounded in $\Sigma$. Moreover,  it follows from  $I_{\omega,\gamma}(u_{n})=0$ and \eqref{DB} that
\begin{equation*}
\int_{\mathbb{R}^{}}A\left(\left|u_{n}(x)\right|\right)dx\leq \int_{\mathbb{R}^{}}B\left(\left|u_{n}(x)\right|\right)dx+|\omega|\left\|u_{n}\right\|^{2}_{L^{2}}\leq C,
\end{equation*}
which implies, by \eqref{DA1} in  Appendix, that the sequence $\left\{ u_{n}\right\}$ is bounded in $W(\dot{\mathbb{R}})$. In addition, since $W^{}(\dot{\mathbb{R}})$ is a reflexive Banach space, there exist $\varphi \in W(\dot{\mathbb{R}^{}})$ such that, up to a subsequence, $u_{n}\rightharpoonup \varphi$ weakly in $W^{}(\dot{\mathbb{R}^{}})$ and $u_{n}(x)\rightarrow \varphi(x)$ $a.e.$  $x\in\mathbb{R}$.

Secondly, we show that $\varphi$ is nontrivial. We remark that the weak convergence in $W(\dot{\mathbb{R}^{}})$ implies that
\begin{equation}\label{3MNB}
\lim_{n\rightarrow \infty}u_{n}(0+)=\varphi(0+)\quad \text{and}\quad \lim_{n\rightarrow \infty}u_{n}(0-)=\varphi(0-).
\end{equation}
Indeed, from $W(\dot{\mathbb{R}^{}})\hookrightarrow \Sigma$  we have that $u_{n}\rightharpoonup \varphi$ weakly in $\Sigma$. Since in addition we have the  compact embedding $H^{1}(0,1)\hookrightarrow C[0,1]$, we get \eqref{3MNB}. Now, suppose that $\varphi\equiv 0$. Since $u_{n}$ satisfies ${I}_{\omega,\gamma}(u_{n})=0$, it follows from \eqref{3MNB} that
\begin{equation}\label{wert}
\lim_{n\rightarrow \infty} I_{\omega}(u_{n})=\lim_{n\rightarrow \infty}\left[{I}_{\omega,\gamma}(u_{n})+\gamma^{-1}\left|u_{n}(0+)-u_{n}(0-)\right|^{2}\right]=0.
\end{equation}
Define the sequence $v_{n}(x)=\lambda_{n}u_{n}(x)$ with
\begin{equation*}
\lambda_{n}=\exp\left(\frac{I_{\omega}(u_{n})}{2\|u_{n}\|^{2}_{L^{2}}}\right),
\end{equation*}
where $\exp(x)$ represent the exponential function. Then, it follows from  \eqref{wert} that $\lim_{n\rightarrow \infty}\lambda_{n}=1$. Moreover,  an easy calculation shows that $I_{\omega}(v_{n})=0$ for any $n\in \mathbb{N}$. Thus, by  the definition of $d^{0}(\omega)$ and Lemma \ref{L5} leads to
\begin{equation*}
d^{0}(\omega)\leq \lim_{n\rightarrow \infty} S_{\omega}(\lambda_{n}u_{n})=\lim_{n\rightarrow \infty}\,\lambda_{n}^{2}\,{S}_{\omega,\gamma}(u_{n})= {d}_{\gamma}(\omega).
\end{equation*}
that it is contrary to \eqref{EIN} and therefore we conclude that $\varphi$ is nontrivial.

Finally, we prove that  $I_{\omega,\gamma}(\varphi)=0$ and $\varphi\in \mathcal{N}_{\omega,\gamma}$.  If we suppose that $I_{\omega,\gamma}(\varphi)<0$, by elementary computations we find that  there is $\lambda\in (0,1)$ such that $I_{\omega,\gamma}(\lambda \varphi)=0$. Then, from the definition of $d_{\gamma}(\omega)$ and  the weak lower semicontinuity of the $L^{2}(\mathbb{R}^{})$-norm, we have
\begin{equation*}
d_{\gamma}(\omega)\leq \frac{1}{2}\left\|\lambda \varphi\right\|^{2}_{L^{2}}<\frac{1}{2}\left\|\varphi\right\|^{2}_{L^{2}}\leq \frac{1}{2}\liminf\limits_{n\rightarrow \infty}\left\|u_{n}\right\|^{2}_{L^{2}}=d_{\gamma}(\omega),
\end{equation*}
it which is impossible. Now suppose that ${I}_{\omega,\gamma}(\varphi)>0$. Since we have that $u_{n}\rightharpoonup \varphi$ weakly in $\Sigma$, from \eqref{3MNB} we get
\begin{eqnarray}\label{3C11}
\mathfrak{t_{\gamma}}[u_{n}]&-&\mathfrak{t_{\gamma}}[u_{n}-\varphi]-\mathfrak{t_{\gamma}}[\varphi]\rightarrow0 \\ \label{3C12}
\left\|u_{n}\right\|^{2}_{L^{2}}&-&\left\|u_{n}-\varphi\right\|^{2}_{L^{2}}-\left\|\varphi\right\|^{2}_{L^{2}}\rightarrow0,
\end{eqnarray}
as $n\rightarrow\infty$. Combining \eqref{3C11}, \eqref{3C12} and Lemma \ref{L4} we obtain
\begin{equation*}
\lim_{n\rightarrow \infty}I_{\omega, \gamma}(u_{n}-\varphi)=\lim_{n\rightarrow \infty}I_{\omega, \gamma}(u_{n})-I_{\omega,\gamma}(\varphi)=-I_{\omega,\gamma}(\varphi),
\end{equation*}
which combined with  $I_{\omega, \gamma}(\varphi)> 0$ give us  that $I_{\omega, \gamma}(u_{n}-\varphi)<0$ for sufficiently large $n$. Thus, by \eqref{3C12} and  applying the same argument as above, we see that
\begin{equation*}
d_{\gamma}(\omega)\leq\frac{1}{2} \lim_{n\rightarrow \infty}\left\|u_{n}-\varphi\right\|^{2}_{L^{2}}=d_{\gamma}(\omega)-\frac{1}{2}\left\|\varphi\right\|^{2}_{L^{2}},
\end{equation*}
which is a contradiction because $\|\varphi\|^{2}_{L^{2}}>0$. Then, we deduce that $I_{\omega,\gamma}(\varphi)=0$. This proves the first part of the statement.  Furthermore, by the weak lower semicontinuity of the $L^{2}(\mathbb{R}^{})$-norm,  we have
\begin{equation*}
d_{\gamma}(\omega)\leq \frac{1}{2}\left\|\varphi\right\|^{2}_{L^{2}}\leq \frac{1}{2} \liminf\limits_{n\rightarrow \infty}\left\|u_{n}\right\|^{2}_{L^{2}}=d_{\gamma}(\omega),
\end{equation*}
which implies, by the definition of $d_{\gamma}(\omega)$, that $\varphi\in \mathcal{N}_{\omega,\gamma}$.
\end{proof}

\section{Characterizations of ground states}
\label{S:3}
The aim of this section is to prove Theorem \ref{12}. Some preparation is needed.
\begin{lemma} \label{L8}
Let  $u\in C^{2}(\mathbb{R}^{+})\cap H^{1}(\mathbb{R}^{+})$ be a non-trivial solution of
\begin{equation}\label{A22}
-u^{\prime \prime}+\omega u-u\,  \mathrm{Log}\left|u\right|^{2}=0, \quad \text{ on} \quad \mathbb{R}^{+}.
\end{equation}
Then there exist $\theta_{}\in \mathbb{R}$ and $c\in\mathbb{R}$  such that
\begin{equation}\label{CLO}
u(x)=e^{i\theta_{} }e^{\frac{\omega+1}{2}}e^{-\frac{1}{2}(x+c)^{2}},\quad  \text{ for all $x\in\mathbb{R}^{+}$}.
\end{equation}
The same conclusion can be reached if we replace $\mathbb{R}^{+}$ by  $\mathbb{R}^{-}$.
\end{lemma}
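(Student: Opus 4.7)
The plan is to reduce \eqref{A22} to a real-valued equation, extract a first integral, and integrate it explicitly. For the reduction, I first observe that $\mathrm{Im}(u'\bar u)' = \mathrm{Im}(u''\bar u) = 0$, because $u''\bar u = (\omega - \mathrm{Log}|u|^2)|u|^2 \in \mathbb{R}$ by \eqref{A22}; hence $\mathrm{Im}(u'\bar u)$ is constant on $\mathbb{R}^+$. Since $u,u' \in L^2(\mathbb{R}^+)$, Cauchy--Schwarz gives $u'\bar u \in L^1(\mathbb{R}^+)$, so that constant must be $0$. Picking any $x_0$ with $u(x_0)\ne 0$ and setting $\theta_0 := \arg u(x_0)$, the function $v := e^{-i\theta_0}u$ satisfies $v(x_0)\in\mathbb{R}$, and $\mathrm{Im}(v'\bar v)(x_0)=0$ forces $v'(x_0)\in\mathbb{R}$ as well. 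Since $z\mapsto z\,\mathrm{Log}|z|^2$ satisfies Osgood's uniqueness criterion (as $\int_{0^+}\frac{ds}{s|\log s|}=+\infty$), \eqref{A22} admits a unique $C^2$ solution for given Cauchy data, so $v$ must coincide with the real solution determined by the real initial data at $x_0$. Thus $u = e^{i\theta_0}v$ with $v$ real-valued on $\mathbb{R}^+$.

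Next, for the real-valued $v$ I derive a first integral: multiplying $-v'' + \omega v - v\,\mathrm{Log}\,v^2 = 0$ by $v'$ and using the antiderivative identity $2vv'\,\mathrm{Log}\,v^2 = (v^2\,\mathrm{Log}\,v^2 - v^2)'$ gives
\[
\frac{d}{dx}\bigl[-(v')^2 + (\omega+1)v^2 - v^2\,\mathrm{Log}\,v^2\bigr] = 0,
\]
so $(v')^2 = (\omega+1)v^2 - v^2\,\mathrm{Log}\,v^2 - K$ for some $K\in\mathbb{R}$. Since $v\in H^1(\mathbb{R}^+)$ forces $v(x)\to 0$, the right-hand side tends to $-K$; hence $K\le 0$, and $K<0$ would keep $|v'|$ uniformly away from zero at infinity, contradicting $v\to 0$. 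Therefore $K=0$ and $(v')^2 = v^2(\omega+1-\mathrm{Log}\,v^2)$. If $v$ vanished at some $x_1$, the identity would yield $v'(x_1)=0$ as well, and Osgood uniqueness would force $v\equiv 0$; so $v$ is of constant sign, and after possibly changing its sign we may assume $v>0$. Setting $\tau := \omega+1-\mathrm{Log}\,v^2 \ge 0$, one has $\tau' = -2v'/v$, and the conservation law reads $(\tau')^2=4\tau$, i.e.\ $\tau' = \mp 2\sqrt{\tau}$. Integrating yields $\sqrt{\tau(x)} = |x+c|$ and thus $\tau(x)=(x+c)^2$ for some $c\in\mathbb{R}$; unwinding gives $v(x) = e^{(\omega+1)/2}e^{-(x+c)^2/2}$, which together with the phase $e^{i\theta_0}$ produces \eqref{CLO}. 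The case $\mathbb{R}^-$ is identical.

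The most delicate point is the reduction to a real solution: it combines the conservation of $\mathrm{Im}(u'\bar u)$ with an Osgood-type uniqueness theorem for the logarithmic nonlinearity to globally propagate real Cauchy data at one point to a real solution on the whole half-line. The remaining integration is routine ODE work, with the only additional subtlety being the exclusion of zeros of $v$, which again invokes Osgood uniqueness.
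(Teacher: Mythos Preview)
Your proof is correct and follows the same overall strategy as the paper---reduce to a real equation via a phase argument, derive the first integral, show the real profile is nonvanishing, then integrate---but the technical tools differ at each step. The paper writes $u=e^{i\theta(x)}\rho(x)$ in polar form, extracts the conserved quantity $\rho^{2}\theta'$ from the imaginary part of the equation, and uses boundedness of $|u'|$ (from the first integral, derived beforehand) together with $\rho\to 0$ to force $\theta'\equiv 0$; you work instead with the equivalent quantity $\mathrm{Im}(u'\bar u)$, kill the constant by the $L^{1}$ argument, and then invoke Osgood-type uniqueness to propagate real Cauchy data to a globally real solution. For the nonvanishing of the real profile the paper cites V\'azquez's strong maximum principle \cite{LVL}, whereas you again use Osgood uniqueness (zero Cauchy data forces the zero solution). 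Finally, the paper outsources the explicit integration to \cite{CMKA}, while your substitution $\tau=\omega+1-\mathrm{Log}\,v^{2}$ makes the computation self-contained. Your route has the advantage of avoiding the polar decomposition, which is awkward near possible zeros of $u$; the paper's route avoids checking the Osgood modulus for $z\mapsto z\,\mathrm{Log}|z|^{2}$. Both are valid.
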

\begin{proof}
We  may write  $u(x)=e^{i\theta(x)}\rho (x)$, where $\theta$, $\rho\in C^{2}(\mathbb{R}^{+})$ and $\rho\geq0$. Multiplying the equation \eqref{A22} by $\overline{u^{\prime}}$, we obtain
\begin{equation}\label{EQQ123}
\left|u^{\prime}(x)\right|^{2}-\left(\omega+1\right)\left|u(x)\right|^{2}+\left|u(x)\right|^{2}\mathrm{Log}\left|u(x)\right|^{2}\equiv K,
\end{equation}
where $K\in \mathbb{R}$. Since $u\in H^{1}(\mathbb{R}^{+})$, it follows that $u(x)\rightarrow 0$ as $x\rightarrow \infty$. Thus, by \eqref{A22}, $u^{\prime\prime}(x)\rightarrow 0$ as $x\rightarrow \infty$, and so $u^{\prime}(x)\rightarrow 0$ as $x\rightarrow \infty$. Then, letting  $x\rightarrow \infty$  in \eqref{EQQ123}, we see that $K=0$, so
\begin{equation}\label{EQQ0}
\left|u^{\prime}(x)\right|^{2}-\left(\omega+1\right)\left|u(x)\right|^{2}+\left|u(x)\right|^{2}\mathrm{Log}\left|u(x)\right|^{2}\equiv 0.
\end{equation}

Next, writing  the system of equations satisfied by $\theta$ and $\rho$ we have in particular that $\rho\,\theta^{\prime\prime}+ 2\rho^{\prime}\,\theta^{\prime}\equiv0$. Which implies that there exist $\hat{K}\in \mathbb{R}$ such that $\rho^{2}\theta^{\prime}\equiv \hat{K}$. On the other hand, by \eqref{EQQ0} we have that $\left|u^{\prime}\right|$ is bounded, it follows that $\rho^{2}(\theta^{\prime})^{2}$ is bounded. Since $\rho(x)\rightarrow 0$ as $x\rightarrow \infty$, we must have $\hat{K}\equiv0$. Therefore, $\theta^{\prime}\equiv 0$ and  $u(x)=e^{i\theta}\rho (x)$, where $\theta\in \mathbb{R}$ and  $\rho\geq0$ satisfies
\begin{equation}\label{A33}
-\rho^{\prime \prime}+\omega \rho-\rho\,  \mathrm{Log}\left|\rho\right|^{2}=0, \quad  \text{ on} \quad \mathbb{R}^{+}.
\end{equation}
We  observe that if we take $\beta(s)=\omega s-s\,\mathrm{Log}\,s^{2}$, since $\beta\in C[0,+\infty)$, is nondecreasing for $s$ small,  $\beta(0) = 0$ and $\beta(\sqrt{e^{\omega}})=0$, by \cite[Theorem 1]{LVL} we have that each solution $\rho\geq 0$ is either trivial or strictly positive.
Since $u\neq 0$, we infer that $\rho>0$ on $\mathbb{R}^{+}$. Finally, the equation \eqref{A33} may be integrated using standard arguments. Indeed, by explicit integration there exist $c\in \mathbb{R}$ such that for $x>0$ (see \cite{CMKA}),
 \begin{equation*}
\rho(x)=e^{\frac{\omega+1}{2}}e^{-\frac{1}{2}(x+c)^{2}},
\end{equation*}
which completes of proof.
\end{proof}
\begin{lemma} \label{LCU}
Let $\gamma\in\mathbb{R}\setminus \left\{0\right\}$,  $\omega\in \mathbb{R}$ and $v\in W(\dot{\mathbb{R}})$ be a solution of \eqref{GS}. Then, $v$ verifies the following:
\begin{align}
& v\in C^{j}(\mathbb{R}\setminus \left\{0 \right\}), \quad j=1, 2, \label{1} \\
& -v^{\prime\prime}+\omega v-v\,  \mathrm{Log}\left|v \right|^{2}=0 \quad \mbox{on}\quad \mathbb{R}\setminus \left\{0 \right\},\label{2}  \\
& v^{\prime}(0+)=v^{\prime}(0-)\quad \mbox{and}\quad  v^{}(0+)-v^{}(0-)=- \gamma v^{\prime}(0), \label{3}\\
&  v^{\prime}(x), v(x)\rightarrow 0, \quad  \mbox{as} \quad \left|x\right|\rightarrow\infty. \label{4}
\end{align}
\end{lemma}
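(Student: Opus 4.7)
The plan is to unfold the distributional equation \eqref{GS} in three stages: first extract the pointwise ODE and regularity away from the origin, then derive the transmission conditions at $0$, and finally establish decay at infinity.

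For \eqref{2} and \eqref{1}, I would test \eqref{GS} against $\psi\in C_{c}^{\infty}(\mathbb{R}\setminus\{0\})\subset W(\dot{\mathbb{R}})$. Since $\psi(0\pm)=0$, the jump term of $\mathfrak{t_{\gamma}}[v,\psi]$ drops out, leaving $\Re\int v'\overline{\psi'}\,dx+\omega(v,\psi)-(v\,\mathrm{Log}|v|^{2},\psi)=0$, which is precisely the distributional formulation of $-v''+\omega v-v\,\mathrm{Log}|v|^{2}=0$ on $\mathbb{R}\setminus\{0\}$, i.e. \eqref{2}. To bootstrap regularity I use that $v\in H^{1}(\mathbb{R}\setminus\{0\})$ is continuous on each half-line and that $z\mapsto z\,\mathrm{Log}|z|^{2}$ extends continuously to $z=0$; hence $v''=\omega v-v\,\mathrm{Log}|v|^{2}$ is continuous on $\mathbb{R}\setminus\{0\}$, which forces $v\in C^{2}(\mathbb{R}\setminus\{0\})$ and therefore \eqref{1}.

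For \eqref{3} I would test \eqref{GS} against $\psi\in W(\dot{\mathbb{R}})$ that is smooth and compactly supported on each half-line and whose boundary values $\psi(0\pm)$ can be prescribed independently (for instance $\psi=\chi_{+}\psi_{+}$ with $\psi_{+}\in C^{\infty}_{c}([0,\infty))$ lies in $W(\dot{\mathbb{R}})$ and satisfies $\psi(0+)=\psi_{+}(0)$, $\psi(0-)=0$, and symmetrically on the left). Using \eqref{2} to replace $\omega v-v\,\mathrm{Log}|v|^{2}$ with $v''$, and integrating by parts on each half-line (the boundary terms at $\pm\infty$ vanish because $\psi$ has compact support), I would obtain
\begin{equation*}
0=\Re\bigl\{[v'(0-)+\gamma^{-1}(v(0+)-v(0-))]\overline{\psi(0-)}-[v'(0+)+\gamma^{-1}(v(0+)-v(0-))]\overline{\psi(0+)}\bigr\}.
\end{equation*}
Letting $\psi(0+)$ and $\psi(0-)$ range over $\mathbb{C}$ independently yields $v'(0+)=v'(0-)=-\gamma^{-1}(v(0+)-v(0-))$, which is exactly \eqref{3}.

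For \eqref{4} I would argue via a first-integral argument reminiscent of Lemma \ref{L8}. A direct computation using \eqref{2} together with $\frac{d}{dx}|v|^{2}=2\Re(v'\bar v)$ shows that the function $E(x):=|v'(x)|^{2}-(\omega+1)|v(x)|^{2}+|v(x)|^{2}\mathrm{Log}|v(x)|^{2}$ has zero derivative on each half-line, hence is constant there; call the constant on $(0,\infty)$ by $K_{+}$. The decomposition $v=\chi_{+}v_{+}+\chi_{-}v_{-}$ with $v_{\pm}\in H^{1}_{\rm rad}(\mathbb{R})\hookrightarrow C_{0}(\mathbb{R})$ gives $v(x)\to 0$ as $x\to+\infty$, so the formula for $E$ forces $|v'(x)|^{2}\to K_{+}$; but $v'\in L^{2}(0,\infty)$ is incompatible with $K_{+}>0$, and $|v'|^{2}\geq 0$ rules out $K_{+}<0$, so $K_{+}=0$ and $v'(x)\to 0$. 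The same argument on $(-\infty,0)$ yields \eqref{4}. The main obstacle is the careful ordering of these steps: \eqref{2} and \eqref{1} must be in hand before the integration by parts used for \eqref{3} can be justified, and one must verify that the test-function construction really allows prescribing $\psi(0+)$ and $\psi(0-)$ arbitrarily; the remaining analytic content is a standard bootstrap together with the conservation law for the associated ODE.
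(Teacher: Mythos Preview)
Your argument is correct. For \eqref{1}, \eqref{2} and \eqref{4} you proceed essentially as the paper does: a bootstrap with test functions in $C_{c}^{\infty}(\mathbb{R}\setminus\{0\})$ yields the classical ODE and $C^{2}$-regularity away from the origin, and the first-integral identity you use is exactly the opening computation of Lemma~\ref{L8}, to which the paper simply defers for \eqref{4}. The genuine difference is in \eqref{3}. The paper first invokes Lemma~\ref{L8} to know that $v$ has the explicit Gaussian form on each half-line, uses this to see that $w=-\omega v+v\,\mathrm{Log}|v|^{2}$ lies in $L^{2}(\mathbb{R})$, and then appeals to the representation theorem for closed sesquilinear forms \cite[Theorem~10.7]{KSN} to conclude $v\in\mathrm{dom}(H_{\gamma})$, which encodes the transmission conditions. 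Your route is more elementary and self-contained: by choosing compactly supported test functions with independently prescribed traces $\psi(0\pm)$ and integrating by parts on each half-line, you read off \eqref{3} directly from the weak formulation. This avoids both the explicit structure of $v$ and the abstract operator theory, at the price of a short hands-on computation; conversely, the paper's approach makes transparent that \eqref{3} is nothing other than membership in $\mathrm{dom}(H_{\gamma})$.
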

\begin{proof}
The proof of item \eqref{1} follow by a standard bootstrap argument using test functions $\xi \in C^{\infty}_{0}(\mathbb{R} \setminus \left\{0 \right\})$ (see e.g. \cite[Chapter 8]{CB}). Indeed, from \eqref{GS} applied with $\xi\,v $  we deduce that
\begin{equation}\label{3ALU}
-(\xi v)^{\prime\prime}+\omega \xi v=-\xi^{\prime\prime} v-2 \xi^{\prime} v^{\prime}+\xi v\, \mbox{Log}\left|v\right|^{2},
\end{equation}
in the sense of distributions on $\mathbb{R}\setminus \left\{0 \right\}$. The right hand side is in ${L^{2}(\mathbb{R})}$  and so $\xi v\in H^{2}(\mathbb{R})$. This implies that $v$ is in $C^{2}(\mathbb{R}\setminus \left\{0 \right\})\cap H_{\rm loc}^{2}(\mathbb{R})$ and is a classical solution of this equation on $\mathbb{R}\setminus \left\{0 \right\}$,  from which \eqref{1} and \eqref{2} follows.

From the characterization given  by Lemma \ref{L8}, $v$ is the form \eqref{CLO} on each side of the origin. Set $w= -\omega v+v\,\mathrm{Log}\left|v\right|^{2}$. It is clear that $w\in L^{2}(\mathbb{R})$. Moreover, by \eqref{GS}, we see that
\begin{equation*}
\mathfrak{t_{\gamma}}[v,z]=\left(w,z\right)\quad \textrm{for all} \quad z\in W(\dot{\mathbb{R}}).
\end{equation*}
Thus, from the fact that $ W(\dot{\mathbb{R}})$ is dense in $\Sigma$, we have
\begin{equation}\label{YUT}
\mathfrak{t_{\gamma}}[v,z]=\left(w,z\right)\quad \textrm{for all} \quad z\in \Sigma.
\end{equation}
Now, we recall that the form $\mathfrak{t_{\gamma}}$  is associated with the operator  self-adjoint $H_{\gamma}$. Then the theory of representation of forms by operators \cite[Theorem 10.7]{KSN} implies that $v\in \mathrm{dom}({H}_{\gamma})$; that is, $v$ satisfies the two conditions in \eqref{3}. Finally, the proof of \eqref{4} is contained in Lemma \ref{L8}. This concludes the proof.
\end{proof}

\begin{proof}[ \it {Proof of Proposition \ref{11}}]
Let $u\in \mathcal{N}_{\omega,\gamma}$. By Remark \ref{RM}, we see that $u$ satisfies the stationary problem  \eqref{GS}. From Lemma \ref{LCU} and the characterization give by Lemma \ref{L8}, we see that all possible solutions to \eqref{GS} must be given by
\begin{equation}\label{LKI}
v(x)=
\begin{cases}
e^{\theta_{1}}e^{\frac{\omega+1}{2}}e^{-\frac{1}{2}(x+t_{1})^{2}}, &\text{if $x>0$;}\\
e^{\theta_{2}}e^{\frac{\omega+1}{2}}e^{-\frac{1}{2}(x-t_{2})^{2}}, &\text{if $ x<0$;}
\end{cases}
\end{equation}
where $\theta_{1}$, $\theta_{2}\in\mathbb{R}$ and the couple $(t_{1},t_{2})\in \mathbb{R}^{}\times\mathbb{R}^{}$. Notice that, by Lemma \ref{LCU}, the solution $v$ must satisfy the boundary conditions \eqref{3}.

Now, since $u$ is a minimizer of ${S}_{\omega,\gamma}$ under the constraint $I_{\omega,\gamma}(u)=0$, we have that $e^{i\theta_{1}}= -e^{i\theta_{2}}$. Indeed, once fixed $t_{1}$ and $t_{2}$, it is clear that such condition minimizes the quadratic form $\mathfrak{t_{\gamma}}$, while the other terms in the functional ${S}_{\omega,\gamma}$ are the same.  This explains the negative sign in \eqref{Qo}. Taking into account the phase invariance of the problem we can choose $\theta_{1}=0$ and $\theta_{2}=\pi$.

Finally, from \eqref{3} and \eqref{LKI}, the boundary conditions for $v$ can be converted to the system \eqref{3S} for the unknowns $\theta_{1}$ and $\theta_{1}$. We remark that, by the first equation of the system \eqref{3S}, $t_{1}$ and $t_{1}$ must have the same sign. Thus, since $\gamma>0$, by the second equation of the system \eqref{3S}, we have that $t_{1}>0$ and $t_{2}>0$. This concludes the proof.
\end{proof}
In order to identify the ground states we  must find the solutions of system \eqref{3S}. As a first step in that direction we have the following lemma.

\begin{lemma} \label{3L}
For any $\gamma>2$, the function
\begin{equation*}
h(t)=\left(t+1\right)^{2}\left(1-\frac{1}{t^{2}}\right)-\gamma^{2}\mathrm{Log}\left(t^{2}\right),\end{equation*}
has exactly one zero in the interval $(1, \infty)$.
\end{lemma}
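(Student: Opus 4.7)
The plan is to analyze $h$ by studying its derivative, showing that $h$ has a unique critical point in $(1,\infty)$ — a minimum — and then reading off the number of zeros from the fact that $h(1)=0$ and $h\to+\infty$.

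First I would record the two boundary observations. Direct substitution gives $h(1)=(2)^2\cdot 0-\gamma^{2}\mathrm{Log}\,1=0$, and since $(t+1)^{2}(1-1/t^{2})=(t+1)^{3}(t-1)/t^{2}$ grows like $t^{2}$ while $\gamma^{2}\mathrm{Log}(t^{2})$ grows only logarithmically, we have $h(t)\to+\infty$ as $t\to\infty$.

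Next I would compute $h'$. Using $(t+1)^{2}/t^{2}$ has derivative $-2(t+1)/t^{3}$, a short calculation gives
\begin{equation*}
h'(t)=\frac{2\bigl[(t+1)^{2}(t^{2}-t+1)-\gamma^{2}t^{2}\bigr]}{t^{3}}.
\end{equation*}
So the sign of $h'(t)$ coincides with that of $g(t)-\gamma^{2}$, where
\begin{equation*}
g(t):=\frac{(t+1)^{2}(t^{2}-t+1)}{t^{2}}=t^{2}+t+\frac{1}{t}+\frac{1}{t^{2}}.
\end{equation*}
Then I would study $g$ on $(1,\infty)$: one finds
\begin{equation*}
g'(t)=2t+1-\frac{1}{t^{2}}-\frac{2}{t^{3}}=\frac{(t^{2}-1)(2t^{2}+t+2)}{t^{3}},
\end{equation*}
which is strictly positive for $t>1$. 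Thus $g$ is strictly increasing on $(1,\infty)$ from $g(1)=4$ to $+\infty$. Since $\gamma>2$ gives $\gamma^{2}>4$, there is a unique $t^{\ast}\in(1,\infty)$ with $g(t^{\ast})=\gamma^{2}$; moreover $h'<0$ on $(1,t^{\ast})$ and $h'>0$ on $(t^{\ast},\infty)$.

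Finally I would assemble the conclusion. Because $h$ is strictly decreasing on $(1,t^{\ast})$ and $h(1)=0$, we have $h(t)<0$ for $t\in(1,t^{\ast}]$, so $h$ has no zero there. On $(t^{\ast},\infty)$ the function $h$ is strictly increasing with $h(t^{\ast})<0$ and $h(t)\to+\infty$, so by the intermediate value theorem it has exactly one zero. The only subtle point in the whole argument is the algebraic factorization that yields the monotonicity of $g$; once that is in hand, everything is routine.
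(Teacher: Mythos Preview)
Your argument is correct. Both you and the paper use $h(1)=0$, $h(t)\to+\infty$, and the fact that $h$ starts out decreasing at $t=1$ (the paper records $h'(1)=2(4-\gamma^{2})<0$). The difference lies in how uniqueness of the zero is obtained: the paper simply asserts that $h''(t)>0$ for $t\ge 1$, so $h$ is strictly convex and can cross zero at most twice, the value $t=1$ already being one of those crossings. You instead analyze $h'$ directly, introducing the auxiliary function $g(t)=t^{2}+t+1/t+1/t^{2}$ and proving it is strictly increasing from $g(1)=4$, which locates the unique critical point of $h$. Your route is a bit more computational but has the virtue of being fully explicit (the factorization $g'(t)=(t^{2}-1)(2t^{2}+t+2)/t^{3}$ settles the monotonicity at once), whereas the paper's convexity claim, while true, requires the reader to check that $h''(t)=2-\frac{2(2t+3)}{t^{4}}+\frac{2\gamma^{2}}{t^{2}}>0$ for $t\ge 1$ and $\gamma>2$.
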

\begin{proof}
It is clear that $h\in C^{2}(\mathbb{R}^{+})$, $h(1)=0$ and $h(t)\rightarrow \infty$ as $t\rightarrow\infty$. By direct computations, we see that $h^{\prime}(1)=2(4-\gamma^{2})<0$ for $\gamma>2$. Then, there exist $x_{0}$,  $x_{1}\in (1, \infty)$ such that $h(x_{0})<0$ and $h(x_{1})>0$. Thus, the function $h$ must have at least one zero on the interval $(1, \infty)$. Moreover, we have that $h^{\prime \prime}(t)>0$ for $t\geq1$, which implies  that the function $h$ has exactly one zero.
\end{proof}

\begin{proposition} \label{3LL}
(i) Let $0<\gamma\leq 2$. Then the system \eqref{3S} has a unique solution given by $t_{1}={2}{\gamma}^{-1}$, $t_{2}= {2}{\gamma}^{-1}$.\\
(ii) Let $\gamma>2$. Then the system \eqref{3S} has three solutions;  one of them is given by $t_{1}={2}{\gamma}^{-1}$, $t_{2}= {2}{\gamma}^{-1}$.
\end{proposition}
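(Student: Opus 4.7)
The plan is to notice immediately that $(t_1,t_2) = (2/\gamma, 2/\gamma)$ is a solution for every $\gamma > 0$, and then to parametrize any other solution by the ratio $s := t_2/t_1$, reducing the $2\times 2$ system to a single scalar equation that is exactly $h(s)=0$ for the function $h$ of Lemma \ref{3L}. Counting the solutions then amounts to counting zeros of $h$ in $(0,\infty)\setminus\{1\}$.

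Concretely, taking the logarithm of the first equation in \eqref{3S} gives $\log(t_2/t_1) = (t_2^2 - t_1^2)/2$, so setting $s = t_2/t_1 \neq 1$ yields $t_1^2 = 2\log s/(s^2-1)$. The second equation, after substituting $t_2 = s t_1$, produces $t_1 = (1+s)/(\gamma s)$, hence $t_1^2 = (1+s)^2/(\gamma^2 s^2)$. Equating these two expressions for $t_1^2$ and clearing denominators gives $(s+1)^3(s-1)/s^2 = 2\gamma^2 \log s$, which is precisely $h(s) = 0$. Conversely, any $s>0$, $s\neq 1$, with $h(s)=0$ defines a genuine solution $(t_1,t_2)$ by the above formulas. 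A direct calculation also shows the symmetry $h(1/s) = -h(s)$, so zeros of $h$ in $(0,1)$ and in $(1,\infty)$ are in bijection, and each such pair corresponds to one solution and its swap $(t_2,t_1)$.

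For part (ii), $\gamma > 2$, Lemma \ref{3L} provides exactly one zero $s_0 \in (1,\infty)$ of $h$, giving the asymmetric pair $(t_1,t_2) = \bigl((1+s_0)/(\gamma s_0),\,(1+s_0)/\gamma\bigr)$ with $t_1 < t_2$ and its swap; together with the symmetric pair this makes three. One checks from the formulas that indeed $t_1 < 2/\gamma < t_2$ (using $s_0>1$).

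For part (i), $0 < \gamma \leq 2$, the remaining task is to show that $h$ has \emph{no} zero in $(1,\infty)$. Here I would observe that $h$ is strictly decreasing in $\gamma^2$ on $(1,\infty)$, since $\partial_{\gamma^2} h(t) = -2\log t < 0$ for $t>1$, so it is enough to treat $\gamma = 2$. Setting $\phi(s) := s^2 h(s) = (s+1)^3(s-1) - 8s^2 \log s$, I would compute
\begin{equation*}
\phi(1) = \phi'(1) = \phi''(1) = 0, \qquad \phi'''(1) = 20, \qquad \phi''''(s) = 24 + 16/s^2 > 0,
\end{equation*}
and then integrate four times from $1$ to conclude $\phi(s)>0$ on $(1,\infty)$, whence $h(s) > 0$ there. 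This Taylor/convexity calculation is the main obstacle of the proof: the naive observation $h'(1) = 2(4-\gamma^2)\geq 0$ for $\gamma \leq 2$ does not by itself exclude further zeros, and one needs to go to the third derivative at $s=1$ and exploit $\phi''''>0$ to rule them out uniformly.
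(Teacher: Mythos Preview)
Your argument is correct. The reduction of asymmetric solutions of \eqref{3S} to the scalar equation $h(s)=0$ via $s=t_2/t_1$ is exactly the mechanism the paper also uses for part (ii), and the symmetry $h(1/s)=-h(s)$ correctly identifies the zero in $(0,1)$ with the swapped pair. For part (i), however, your route diverges from the paper's: the paper does \emph{not} show $h>0$ on $(1,\infty)$ for $\gamma\le 2$ directly. Instead it argues geometrically: along the off-diagonal branch $\mathcal{I}_2=\{(t_1,t_2)\in(0,1)\times(1,\infty):f(t_1)=f(t_2)\}$ of solutions to the first equation, it shows via a Lagrange-multiplier computation (and an integral representation of $g^2(t)=t^4(f'(t))^2$) that the constraint function $t_1^{-1}+t_2^{-1}$ has no interior critical points, hence is strictly monotone with infimum $2$ at the vertex $(1,1)$. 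This immediately excludes intersections with the hyperbola $t_1^{-1}+t_2^{-1}=\gamma$ for $\gamma\le 2$ and gives exactly one intersection for $\gamma>2$.

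What each approach buys: the paper's variational picture yields the global monotonicity of $t_1^{-1}+t_2^{-1}$ along $\mathcal{I}_2$ in one stroke (handling both regimes of $\gamma$ uniformly), at the cost of a somewhat delicate integral comparison of $g^2$ on the two monotone branches of $f$. Your approach is more elementary calculus: the observation $\partial_{\gamma^2}h<0$ on $(1,\infty)$ reduces to the single borderline case $\gamma=2$, and the computation $\phi(1)=\phi'(1)=\phi''(1)=0$, $\phi'''(1)=20$, $\phi''''>0$ (with $\phi=s^2h$) cleanly forces $h>0$ on $(1,\infty)$. This avoids the change-of-variables and the analysis of the auxiliary function $p$ used in the paper, and is arguably the shorter route here.
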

\begin{proof} We use the argument in \cite[Theorem 5.3]{ADNP}.
We set $f(t)=te^{-\frac{1}{2}t^{2}}$ for all $t\geq 0$. It is clear that the first equation of the system \eqref{3S} is equivalent to $f(t_{1})=f(t_{2})$. Notice that $f(0)=0$, $f(t)>0$ for all $t>0$, and $f(t)\rightarrow 0$ as $t\rightarrow \infty$. Moreover, $f$ has a unique critical point at $t=1$,  which is a maximum.

It is not hard to see that the set of the solutions of the first equation in \eqref{3S}  with $t_{1}\leq t_{2}$ consists of the union of the following curves:
\begin{align*}
\mathcal{I}_{1}&:=\left\{(t_{1},t_{2})\in \mathbb{R}^{+}\times\mathbb{R}^{+}\,\,:\,\,t_{1}=t_{2} \right\}\\
\mathcal{I}_{2}&:=\left\{(t_{1},t_{2})\in(0,1)\times(1,\infty) \,\,:\,\,f(t_{1})=f(t_{2})\right\}.
\end{align*}
We remark that due to the regularity of $f$, $\mathcal{I}_{2}$ is a regular curve; the curve $\mathcal{I}_{1}\cup \mathcal{I}_{2}$ is given in Figure \ref{GUUN}.

Next, for each $\gamma$ positive the second equation of system \eqref{3S} is a hyperbola in the plane $(t_{1}, t_{2})$. Thus, the solution of the system of equations \eqref{3S} is the  intersection of these hyperbolas with $\mathcal{I}_{1}\cup \mathcal{I}_{2}$.  In order to find  all points of intersection of the two curves,  it is convenient to prove that
\begin{align}\label{3CNN}
\inf_{ t_{1},t_{2} \in \mathcal{I}_{1}}t^{-1}_{1}+t^{-1}_{2}&=0,\\\label{3CNP}
\inf_{ t_{1},t_{2} \in \mathcal{I}_{2}}t^{-1}_{1}+t^{-1}_{2}&=2.
\end{align}
To show this, we use the Lagrange multiplier method. We note that \eqref{3CNN}  is obvious.

Now, to find critical points of the function $t^{-1}_{1}+t^{-1}_{2}$  on the set $\mathcal{I}_{2}$, we must solve the following equation
\begin{gather}\label{3CNW}
\begin{aligned}
t^{2}_{1}f^{\prime}(t_{1})&=-t^{2}_{2}f^{\prime}(t_{2}), \quad \text{ $t_{1}\neq 1$ and  $t_{2}\neq 1$.} \\
\end{aligned}
\end{gather}
In particular, it follows from  $f(t_{1})=f(t_{2})$ and \eqref{3CNW} that $t_{1}-t^{3}_{1}=t^{3}_{2}-t_{2}$. Moreover, since $t_{1}$ satisfies $t_{1}\in(0,1)$, we have that $t_{2} \in(1,2\sqrt{3}/3)$. On the other hand, set $g(t)=t^{2}f^{\prime}(t)$. It is not hard to see that $g>0$ in $(0,1)$ and $g<0$ in $(1,\infty)$. Thus, the condition $g(t_{1})=-g(t_{2})$ with $0<t_{1}<1<t_{2}<+\infty$  is equivalent to $g^{2}(t_{1})=g^{2}(t_{2})$, $0<t_{1}<1<t_{2}<2\sqrt{3}/3$.

Since $g^{2}(1)=0$, we have
\begin{equation}\label{3OO1}
g^{2}(t)=\int^{t}_{1}\frac{d}{ds}g^{2}(s)ds=\int^{t}_{1}\left(4s^{3}\left(f^{\prime}(s)\right)^{2}+2s^{4}f^{\prime}(s)f^{\prime\prime}(s) \right)ds.
\end{equation}
We note that the function $f$ is increasing in the interval $(0,1)$, which implies that it is possible to perform the change of variable $t\rightarrow y=f(s)$ in the integral \eqref{3OO1} to obtain
\begin{equation*}
g^{2}(t_{1})=\int^{f(t_{1})}_{f(1)}p(\tau (y))dy,
\end{equation*}
where
\begin{equation}\label{3OO}
p(t):=4s^{3}f^{\prime}(s)+2s^{4}f^{\prime\prime}(s)=2t^{3}e^{-\frac{t^{2}}{2}}\left(t^{4}-5t^{2}+2\right),
\end{equation}
and the function $\tau$  is the inverse of $f$ in the interval $(0,1)$. Arguing in the same way on $(1,+\infty)$, we conclude that
\begin{equation}\label{3O1}
g^{2}(t_{2})=\int^{f(t_{2})}_{f(1)}p(\sigma(y))dy,
\end{equation}
where $\sigma$ is the inverse of $f$ in the interval $(1,+\infty)$. Now, in the interval $[1, 2\sqrt{3}/3]$ the function $p$ is negative and monotonically decreasing, which combined with \eqref{3O1} gives
\begin{equation}\label{3O3}
g^{2}(t_{2})=\int^{f(1)}_{f(t_{2})}\left|p(\sigma(y))\right|dy.
\end{equation}

Next, if $0<s_{1}<1<s_{2}<2\sqrt{3}/3$, then $\left|p(s_{1})\right|<\left|p(1)\right|<\left|p(s_{2})\right|$. Since in addition we have
$0<\tau(y)<1$, $1<\sigma(y)<2\sqrt{3}/3$ and $f(t_{1})=f(t_{2})$, we get
\begin{equation*}
g^{2}(t_{1})=\int^{f(t_{1})}_{f(1)}p(\tau (y))dy<\int^{f(1)}_{f(t_{2})}\left|p(\sigma(y))\right|dy=g^{2}(t_{2}),
\end{equation*}
where in the last identity we used \eqref{3O3}. This implies that $g^{2}(t_{1})=g^{2}(t_{2})$  can be obtained only when $t_{1}=t_{2}=1$.
Therefore, there are no critical point of $t^{-1}_{1}+t^{-1}_{2}$ on the set $\mathcal{I}_{2}$.  Thus, we see that  $t_{1}=t_{2}=1$ correspond to a minimum. This proves \eqref{3CNP}.

To conclude, combining \eqref{3CNN} and \eqref{3CNP} we infer that:

If $0<\gamma\leq2$, then the system \eqref{3S} has exactly one solution on the set $\mathcal{I}_{1}$ and is given by $t_{1}=t_{2}=2/\gamma$. Notice that, by \eqref{3CNP}, there are no solutions of the system \eqref{3S} on the set $\mathcal{I}_{2}$. Hence ${\rm (i)}$ follows.
\begin{figure}[htp]
\begin{center}
   \includegraphics[width=0.4\textwidth]{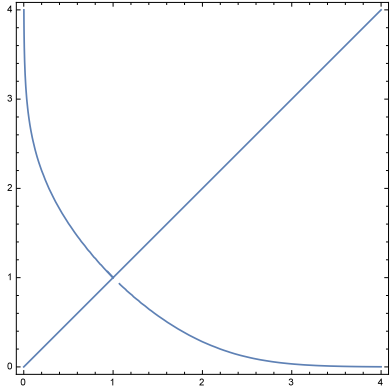}\\
  \caption{The graph of the curve $\mathcal{I}_{1}\cup \mathcal{I}_{2}$.} \label{GUUN}
  \end{center}
\end{figure}
If $\gamma>2$,  then the system \eqref{3S} has exactly three solutions: the first one lies in $\mathcal{I}_{1}$ and is given by $t_{1}=t_{2}=2/\gamma$. On the other hand, the first equation in \eqref{3S} implies that $t_{2}/t_{1}=\gamma t_{2}-1$. By substituting this expression into the second equation in \eqref{3S}, one finds the equation $h(t)=0$ with $t=t_{2}/t_{1}=\gamma t_{2}-1$; $h$ is defined in Lemma \ref{3L}.
We remark that  $t>1$. Indeed, $\gamma>2$ and $t_{2}>1$. Now, by Lemma \ref{3L}, we have that such equation has a unique solution $z_{0}$ in the interval $(1,+\infty)$. Thus, $\left((z_{0}+1)\gamma^{-1},z^{-1}_{0}(z_{0}+1)^{}\gamma^{-1}\right)$ is a solution to \eqref{3S}. Due to the symmetry of \eqref{3S} under change of $t_{1}$ and $t_{2}$, the third and last solution is given by $\left(z^{-1}_{0}(z_{0}+1)^{}\gamma^{-1},(z_{0}+1)\gamma^{-1}\right)$. This concludes the proof of ${\rm (ii)}$.
\end{proof}

\begin{proof}[ \it {Proof of Theorem \ref{12}}]
If $0<\gamma \leq 2$, by Propositions \ref{11} and \ref{3LL} we have $\mathcal{N}_{\omega,\gamma}\subset\bigl\{e^{i\theta}\phi^{t_{\ast},t_{\ast}}_{\omega}: \theta\in\mathbb{R} \bigl\}$, where $t_{\ast}=2/{\gamma}$. Moreover, by Theorem \ref{ESSW} we see that the set $\mathcal{N}_{\omega,\gamma}$ is not empty, which implies that $\mathcal{N}_{\omega,\gamma}=\bigl\{e^{i\theta}\phi^{t_{\ast},t_{\ast}}_{\omega}: \theta\in\mathbb{R} \bigl\}$. Thus we obtain the proof of (i) of Theorem \ref{12}.  Now we prove (ii) of Theorem \ref{12}.

If $\gamma>2$, by Propositions \ref{11} and \ref{3LL} we have $\mathcal{N}_{\omega,\gamma}\subset\bigl\{e^{i\theta}\phi^{t_{\ast},t_{\ast}}_{\omega}, e^{i\theta}\phi^{t_{1},t_{2}}_{\omega}, e^{i\theta}\phi^{t_{2},t_{1}}_{\omega}: \theta\in\mathbb{R} \bigl\}$, where $t_{\ast}=2{\gamma}^{-1}$ and the couple $(t_{1},t_{2})$ solves the system \eqref{3S} with $t_{1}<t_{2}$.  We note  that ${S}_{\omega,\gamma}\bigl(\phi^{t_{1},t_{2}}_{\omega}\bigl)={S}_{\omega,\gamma}\bigl(\phi^{t_{2},t_{1}}_{\omega}\bigl)$. Thus, in order to establish which stationary point is the minimizer we must compare ${S}_{\omega,\gamma}\bigl(\phi^{t_{1},t_{2}}_{\omega}\bigl)$ with ${S}_{\omega,\gamma}\bigl(\phi^{t_{\ast},t_{\ast}}_{\omega}\bigl)$.
Consider the functions
\begin{equation}\label{3IOP}
\Gamma(t)=\int^{\infty}_{t}e^{-s^{2}}ds, \quad t\in(0,\infty) \quad \mbox{and} \quad \sigma(t)=\frac{t}{\gamma\, t-1}, \quad
t\in (\gamma^{-1}, \infty).
\end{equation}
Next, we define the function
\begin{equation}\label{ANC}
n_{\gamma}(t)=\Gamma(t)+\Gamma(\sigma(t)), \quad \mbox{ for all $t\in (\gamma^{-1}, \infty)$.}
\end{equation}
It is clear that $n_{\gamma}\in C^{2}(\gamma^{-1},\infty)$.  Recalling that $\gamma^{-1}<t_{1}<t_{\ast}=2\gamma^{-1}<t_{2}$, we get by direct computations
\begin{equation}\label{3UNM}
{S}_{\omega,\gamma}\left(\phi^{t_{1},t_{2}}_{\omega}\right)=e^{\omega+1}\,n_{\gamma}(t_{1})\quad \mbox{and} \quad {S}_{\omega,\gamma}\left(\phi^{t_{\ast},t_{\ast}}_{\omega}\right)=e^{\omega+1}\,n_{\gamma}(2\gamma^{-1}).
\end{equation}
Therefore, we need to study the behaviour of the function  $n_{\gamma}$ in the interval $\left(\gamma^{-1},2\gamma^{-1}\right]$. We see that
its derivative is given by
\begin{equation*}
n^{\prime}_{\gamma}(t)=\frac{1}{\left(\gamma\,t-1\right)^{2}}e^{-\left(\sigma(t)\right)^{2}}-e^{-t^{2}}.
\end{equation*}
Now, since $t\in\left(\gamma^{-1},\infty\right)$,  the critical points of $n_{\gamma}$ are, by definition, the points where $n^{\prime}_{\gamma}(t)=0$. More precisely,
\begin{equation*}
\sigma(t)\,e^{-\frac{1}{2}\left(\sigma(t)\right)^{2}}=t\, e^{-\frac{1}{2}t^{2}},
\end{equation*}
which is equivalent to the first equation of \eqref{3S} in the unknowns $(t,\sigma(t))$. Moreover, by \eqref{3IOP}, we have
\begin{equation*}
\sigma(t)^{-1}+t^{-1}=\gamma,
\end{equation*}
which is equivalent to the second equation of \eqref{3S} in the unknowns $(t,\sigma(t))$. That is, the couple $(t,\sigma(t))$ solves the system
\eqref{3S}.  By Proposition \ref{3LL}, we have that for $\gamma >2$ there are three critical points for $n_{\gamma}$: $t_{1}$, $2\gamma^{-1}$ and  $t_{2}$. Moreover, by elementary computations, we see that
\begin{equation*}
n^{\prime\prime}_{\gamma}\left(2\gamma^{-1}\right)=2\gamma^{-1}e^{-\frac{4}{\gamma}}\left(4-\gamma^{2}\right),
\end{equation*}
which is negative if $\gamma >2$, it follows that the function $n_{\gamma}$ has a maximum value at $t_{\ast}=2\gamma^{-1}$. Thus, since $\gamma^{-1}<t_{1}<t_{\ast}<t_{2}$, we infer that $n_{\gamma}(t_{1})<n_{\gamma}(2\gamma^{-1})$. Moreover, using \eqref{3UNM}, we obtain ${S}_{\omega,\gamma}(\phi^{t_{1},t_{2}}_{\omega})<{S}_{\omega,\gamma}(\phi^{t_{\ast},t_{\ast}}_{\omega})$. Therefore, it  implies  that $\phi^{t_{1},t_{2}}_{\omega}$ and $\phi^{t_{2},t_{1}}_{\omega}$ are the minimizers of $d_{\gamma}(\omega)$. Thus we get the proof of (ii) of Theorem \ref{12}.
\end{proof}

\section{Stability of the ground states}
\label{S:4}
The proof of Theorem \ref{EST} relies on the following compactness result.

\begin{lemma} \label{CSM}
Let $\left\{ u_{n}\right\}\subseteq W(\dot{\mathbb{R}})$ be a minimizing sequence for $d_{\gamma}(\omega)$. Then, up to a subsequence,  there exist  $\varphi\in\mathcal{N}_{\omega,\gamma}$ such that $u_{n}\rightarrow \varphi$ strongly in $W(\dot{\mathbb{R}})$.
\end{lemma}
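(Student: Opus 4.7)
My plan is to upgrade the weak convergence obtained in the proof of Theorem \ref{ESSW} to strong convergence in $W(\dot{\mathbb{R}})$. I would first rerun that argument verbatim on the minimizing sequence $\{u_n\}$: such a sequence is bounded in $W(\dot{\mathbb{R}})$, and on a subsequence $u_n\rightharpoonup\varphi$ weakly in $W(\dot{\mathbb{R}})$, $u_n\to\varphi$ almost everywhere, and the limit $\varphi$ belongs to $\mathcal{N}_{\omega,\gamma}$. In particular $I_{\omega,\gamma}(\varphi)=0$ and $\tfrac{1}{2}\|\varphi\|_{L^{2}}^{2}=d_{\gamma}(\omega)$.

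The first key step is to promote the weak $L^{2}$ convergence to strong. By the second equivalent formulation of $d_{\gamma}(\omega)$ in \eqref{MPE} and weak lower semicontinuity of the $L^{2}$-norm,
\[
\|\varphi\|_{L^{2}}^{2}\le\liminf_{n\to\infty}\|u_{n}\|_{L^{2}}^{2}=2\,d_{\gamma}(\omega)=\|\varphi\|_{L^{2}}^{2},
\]
so $\|u_{n}\|_{L^{2}}\to\|\varphi\|_{L^{2}}$, which combined with weak convergence in $L^{2}(\mathbb{R})$ yields $u_{n}\to\varphi$ strongly in $L^{2}(\mathbb{R})$. I would then perform a Brezis--Lieb-type splitting of each piece of $I_{\omega,\gamma}$: the gradient part splits because $u_{n}'\rightharpoonup\varphi'$ in $L^{2}$; the boundary part $\gamma^{-1}|u_{n}(0+)-u_{n}(0-)|^{2}$ splits because $u_{n}(0\pm)\to\varphi(0\pm)$ as in \eqref{3MNB}; and the logarithmic term splits by Lemma \ref{L4}. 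Combining these with $I_{\omega,\gamma}(u_{n})\to 0 = I_{\omega,\gamma}(\varphi)$ gives $I_{\omega,\gamma}(u_{n}-\varphi)\to 0$.

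It remains to upgrade strong $L^{2}$ convergence of $u_{n}-\varphi$ to convergence in $H^{1}(\mathbb{R}\setminus\{0\})$ and in the Orlicz space. Using the decomposition $|z|^{2}\mathrm{Log}\,|z|^{2}=B(|z|)-A(|z|)$ built into \eqref{IFD}, the bound \eqref{DB} with $v=0$ gives
\[
\int_{\mathbb{R}}\!B(|u_{n}-\varphi|)\,dx\le C\bigl(1+\|u_{n}-\varphi\|_{\Sigma}^{2}\bigr)\|u_{n}-\varphi\|_{L^{2}}\longrightarrow 0.
\]
From $I_{\omega,\gamma}(u_{n}-\varphi)\to 0$ I therefore deduce
\[
\mathfrak{t}_{\gamma}[u_{n}-\varphi]+\int_{\mathbb{R}}A(|u_{n}-\varphi|)\,dx\longrightarrow 0.
\]
Since $H_{\gamma}\ge -c$ with $c=4/\gamma^{2}$, the quantity $\mathfrak{t}_{\gamma}[u_{n}-\varphi]+c\|u_{n}-\varphi\|_{L^{2}}^{2}$ is non-negative, so the above limit forces both $\mathfrak{t}_{\gamma}[u_{n}-\varphi]\to 0$ and $\int A(|u_{n}-\varphi|)\,dx\to 0$. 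The first conclusion, together with $\|u_{n}-\varphi\|_{L^{2}}\to 0$ and the norm equivalence $\|\cdot\|_{\Sigma}^{2}\simeq\mathfrak{t}_{\gamma}[\cdot]+(c+1)\|\cdot\|_{L^{2}}^{2}$, yields $\|u_{n}-\varphi\|_{\Sigma}\to 0$. The second, combined with the $\Delta_{2}$-regularity of $A$ recorded in Proposition \ref{orlicz}(i) of the Appendix, yields $\|u_{n}-\varphi\|_{L^{A}}\to 0$. Hence $u_{n}\to\varphi$ in $W(\dot{\mathbb{R}})$. I expect the delicate step to be the passage from modular to Luxemburg norm convergence in $L^{A}$; this is exactly the technical point that the Appendix's Orlicz-space machinery is designed to handle, so it should go through without incident.
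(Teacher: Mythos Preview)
Your argument is correct and, at its core, the same as the paper's: establish strong $L^{2}$ convergence first, then exploit the constraint $I_{\omega,\gamma}=0$ together with the control \eqref{DB} on the $B$-integral to force convergence of the remaining two pieces, $\mathfrak t_{\gamma}$ and $\int A$. The only difference is organizational. The paper works directly with $u_{n}$ versus $\varphi$: from \eqref{DB} and $u_{n}\to\varphi$ in $L^{2}$ it gets $\int B(|u_{n}|)\to\int B(|\varphi|)$, hence $\mathfrak t_{\gamma}[u_{n}]+\int A(|u_{n}|)\to \mathfrak t_{\gamma}[\varphi]+\int A(|\varphi|)$, and then splits this sum using weak lower semicontinuity of $\|\,\cdot\,'\|_{L^{2}}$ and Fatou for $\int A$. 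You instead work with the difference $u_{n}-\varphi$, passing through a Brezis--Lieb splitting of $I_{\omega,\gamma}$ to reach $I_{\omega,\gamma}(u_{n}-\varphi)\to 0$, and then split using nonnegativity of $A$ and the spectral lower bound $\mathfrak t_{\gamma}\ge -4\gamma^{-2}\|\cdot\|_{L^{2}}^{2}$. Both routes are valid; the paper's is marginally shorter since it avoids the intermediate $I_{\omega,\gamma}(u_{n}-\varphi)\to 0$ step.

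One small citation fix: the passage ``$\int A(|u_{n}-\varphi|)\to 0\ \Rightarrow\ \|u_{n}-\varphi\|_{L^{A}}\to 0$'' is not Proposition~\ref{orlicz}(i) (which is the converse implication) but rather Proposition~\ref{orlicz}(iii) via \eqref{DA1}, or equivalently Proposition~\ref{orlicz}(ii) applied with limit $0$ (you do have $u_{n}-\varphi\to 0$ a.e.).
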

\begin{proof}
By Theorem \ref{ESSW}, we see that exist  $\varphi\in \mathcal{N}_{\omega,\gamma}$ such that, up to a subsequence, $u_{n}\rightharpoonup \varphi$ weakly  in $W(\dot{\mathbb{R}})$ and $u_{n}\rightarrow \varphi$ $a.e.$ in  $\mathbb{R}$. Notice that (see \eqref{3MNB})
\begin{equation}\label{34MNB}
\lim_{n\rightarrow \infty}u_{n}(0+)=\varphi(0+)\quad \text{and}\quad \lim_{n\rightarrow \infty}u_{n}(0-)=\varphi(0-).
\end{equation}
Furthermore, tanks to \eqref{3C12},
we have  $u_{n}\rightarrow \varphi$   in $L^{2}(\mathbb{R}^{})$. Then, since the sequence $\left\{ u_{n}\right\}$ is bounded in $\Sigma$, from \eqref{DB} we obtain
\begin{equation*}
 \lim_{n\rightarrow \infty}\int_{\mathbb{R}^{}}B\left(\left|u_{n}(x)\right|\right)dx=\int_{\mathbb{R}^{}}B\left(\left|\varphi(x)\right|\right)dx.
\end{equation*}
Which combined with $I_{\omega,\gamma}(u_{n})=I_{\omega,\gamma}(\varphi)=0$  for any $n\in \mathbb{N}$,  gives
\begin{equation}\label{2BX1}
\lim_{n\rightarrow \infty}\left[\mathfrak{t_{\gamma}}[u_{n}]+\int_{\mathbb{R}^{}}A\left(\left|u_{n}(x)\right|\right)dx\right]=\mathfrak{t_{\gamma}}[\varphi]+\int_{\mathbb{R}^{}}A\left(\left|\varphi(x)\right|\right)dx.
\end{equation}
Moreover, by \eqref{2BX1},  the weak lower semicontinuity of the $L^{2}(\mathbb{R}^{})$-norm and Fatou lemma, we deduce (see, for example, \cite[Lemma 12 in chapter V]{AH})
\begin{align}
& \lim_{n \to \infty}\mathfrak{t_{\gamma}}[u_{n}]=\mathfrak{t_{\gamma}}[\varphi],\label{N1}\\
& \lim_{n \to \infty}\int_{\mathbb{R}}A\left(\left|u_{n}(x)\right|\right)dx=\int_{\mathbb{R}}A\left(\left|\varphi(x)\right|\right)dx. \label{N2} \end{align}
Since $u_{n}\rightharpoonup \varphi$ weakly  in $\Sigma$, it follows from \eqref{34MNB} and \eqref{N1}  that $u_{n}\rightarrow\varphi$  in $\Sigma$.  Finally, by Proposition  \ref{orlicz}-{ii)} (Appendix below) and \eqref{N2} we have $u_{n}\rightarrow\varphi$  in $L^{A}(\mathbb{R})$. Thus, by definition of the $W(\dot{\mathbb{R}})$-norm, we infer that $u_{n}\rightarrow\varphi$  in $W(\dot{\mathbb{R}})$. Which concludes the proof.
\end{proof}

\begin{proof}[ \it {Proof of Theorem \ref{EST}}] Our proof is inspired by the results of \cite{NANAA, AnguloArdila2016}. The proof of part (i) in theorem, the stability of the ground state $\phi^{t_{\ast},t_{\ast}}_{\omega}$ for $0<\gamma \leq2$, follows along the same lines  as \cite[Theorem 1.2]{AnguloArdila2016}. We omit the details.

Next we prove (ii) of theorem. Fix $\gamma>2$. Now arguing by contradiction and suppose that $e^{i\omega t}\phi^{t_{1},t_{2}}_{\omega}$ is not stable in $W(\dot{\mathbb{R}})$, then there exist $\epsilon>0$, a sequence $(u_{n,0})_{n\in \mathbb{N}}$ such that
\begin{equation}\label{3C1}
\left\|u_{n,0}-\phi^{t_{1},t_{2}}_{\omega}\right\|_{ W(\dot{\mathbb{R}})}<\frac{1}{n},
\end{equation}
and a sequence $(\tau_{n})_{n\in \mathbb{N}}$ such that
\begin{equation}\label{3C2}
{\rm\inf\limits_{\theta\in \mathbb{R}}} \|u_{n}(\tau_{n})-e^{i\theta}\phi^{t_{1},t_{2}}_{\omega}\|_{W(\dot{\mathbb{R}})}=\frac{\epsilon}{2},
\end{equation}
where $u_{n}$ denotes the solution of the Cauchy problem \eqref{00NL} with initial data $u_{n,0}$.

With no loss of generality, we assume
\begin{equation}\label{3EX1}
\epsilon_{}\leq \inf_{\theta\in \mathbb{R}}\|\phi^{t_{1},t_{2}}_{\omega}-e^{i\theta}\phi^{t_{2},t_{1}}_{\omega}\|_{W(\dot{\mathbb{R}})}=\|\phi^{t_{1},t_{2}}_{\omega}-\phi^{t_{2},t_{1}}_{\omega}\|_{W(\dot{\mathbb{R}})}.
\end{equation}

Set $v_{n}(x)= u_{n}(x,\tau_{n})$. By \eqref{3C1} and conservation laws, as $n\rightarrow \infty$,
\begin{gather}
\left\|v_{n}\right\|^{2}_{L^{2}}=\left\|u_{n}(\tau_{n})\right\|^{2}_{L^{2}}=\left\|u_{n,0}\right\|^{2}_{L^{2}}\rightarrow \left\|\phi_{\omega,\gamma}\right\|^{2}_{L^{2}}\label{CE1} \\
E(v_{n})=E(u_{n}(\tau_{n}))=E(u_{n,0})\rightarrow E(\phi_{\omega,\gamma}).\label{CE2}
\end{gather}
In particular, it follows from \eqref{CE1} and \eqref{CE2} that, as $n\rightarrow \infty$,
\begin{equation}\label{A12}
S_{\omega,\gamma}(v_{n})\rightarrow S_{\omega,\gamma}(\phi^{t_{1},t_{2}}_{\omega})=d_{\gamma}(\omega).
\end{equation}
Moreover, combining \eqref{CE1} and \eqref{A12} leads to $I_{\omega,\gamma}(v_{n})\rightarrow 0$ as $n\rightarrow \infty$.
Define the sequence $f_{n}(x)=\rho_{n}v_{n}(x)$ with
\begin{equation*}
\rho_{n}=\exp\left(\frac{I_{\omega,\gamma}(v_{n})}{2\|v_{n}\|^{2}_{L^{2}}}\right),
\end{equation*}
where $\exp(x)$ is the exponential function. It is clear that $\lim_{n\rightarrow \infty}\rho_{n}=1$ and $I_{\omega,\gamma}(f_{n})=0$ for any $n\in\mathbb{N}$. Furthermore, since the sequence $\left\{v_{n}\right\}$  is bounded in $W(\dot{\mathbb{R}})$, we get
\begin{equation}\label{NUW}
\|v_{n}-f_{n}\|_{W(\dot{\mathbb{R}})}\rightarrow 0, \quad \mbox{as} \quad n\rightarrow \infty.
\end{equation}
Then, thanks to \eqref{A12}, we have that $\left\{f_{n}\right\}$ is a minimizing sequence for $d_{\gamma}(\omega)$. Therefore, by Lemma \ref{CSM}, up to a subsequence, there exist $\varphi\in\mathcal{N}_{\omega,\gamma}$  such that
\begin{equation}\label{WQE}
\|f_{n}-\varphi\|_{W(\dot{\mathbb{R}})}\rightarrow 0, \quad \mbox{as} \quad n\rightarrow \infty.
\end{equation}
Moreover, by Theorem \ref{12}, we see that either $\varphi=e^{i\theta}\phi^{t_{1},t_{2}}_{\omega}$ or $\varphi=e^{i\theta}\phi^{t_{2},t_{1}}_{\omega}$ for some value $\theta\in \mathbb{R}$.

Suppose that there exist $\theta\in \mathbb{R}$ such that $\varphi=e^{i\theta}\phi^{t_{1},t_{2}}_{\omega}$. It follows from \eqref{WQE}, that $f_{n}\rightarrow e^{i\theta}\phi^{t_{1},t_{2}}_{\omega}$ strongly in $W(\dot{\mathbb{R}})$. Then, by \eqref{NUW}, we have
\begin{align*}
\|u_{n}(\tau_{n})-e^{i\theta}\phi^{t_{1},t_{2}}_{\omega}\|_{ W(\dot{\mathbb{R}})}&\leq \|v_{n}-f_{n}\|_{W(\dot{\mathbb{R})}}+\|f_{n}-e^{i\theta}\phi^{t_{1},t_{2}}_{\omega}\|_{W(\dot{\mathbb{R}})}\rightarrow 0
\end{align*}
as $n\rightarrow+\infty$, which is a contradiction with \eqref{3C2} and thus the assumption of orbital instability of the standing wave $e^{i\omega t}\phi^{t_{1},t_{2}}_{\omega}$ proves false.

Now, suppose that there exist $\theta\in \mathbb{R}$ such that $\varphi=e^{i\theta}\phi^{t_{2},t_{1}}_{\omega}$. Combining \eqref{3C2} and \eqref{NUW} leads to
\begin{align*}
{\rm\inf\limits_{\theta\in \mathbb{R}}}\|f_{n}-e^{i\theta}\phi^{t_{1},t_{2}}_{\omega}\|_{ W(\dot{\mathbb{R}})}&\leq \frac{3}{5}\epsilon.
\end{align*}
for $n$ sufficiently large. Thus, up to a subsequence, there exist  $\left\{\theta_{n}\right\}\subseteq\mathbb{R}$ such that
\begin{equation}\label{3QRR}
\|f_{n}-e^{i\theta_{n}}\phi^{t_{1},t_{2}}_{\omega}\|_{ W(\mathbb{R}_{\circ})}\leq \frac{2}{3}\epsilon,
\end{equation}
which combined with triangular identity, \eqref{3EX1} and \eqref{3QRR} gives
\begin{align*}
\|f_{n}-e^{i\theta_{}}\phi^{t_{2},t_{1}}_{\omega}\|_{ W(\dot{\mathbb{R}})}&\geq \|e^{i\theta_{n}}\phi^{t_{1},t_{2}}_{\omega}-e^{i\theta}\phi^{t_{2},t_{1}}_{\omega}\|_{ W(\dot{\mathbb{R}})}-\|e^{i\theta_{n}}\phi^{t_{1},t_{2}}_{\omega}-f_{n}\|_{ W(\dot{\mathbb{R}})}\\
&\geq \frac{1}{3}\epsilon.
\end{align*}
This is a contradiction with \eqref{WQE}. Thus we get the proof of (ii) of Theorem \ref{EST}.
\end{proof}

\section*{Acknowledgments}
The author would like to thank the reviewer for suggestions to improve the clarity of the presentation.
This paper is part of my Ph.D. thesis at  University of S\~ao Paulo, which was done under the guidance of Professor Jaime Angulo to whom I express my thanks. The author gratefully acknowledges the support from CNPq, through grant No. 152672/2016-8.

\section{Appendix}
\label{S:5}

We list some properties of the Orlicz space $L^{A}(\mathbb{R})$ that we have used above.  For a proof of such statements we refer to \cite[Lemma 2.1]{CL}.

\begin{proposition} \label{orlicz}
Let $\left\{u_{{m}}\right\}$ be a sequence in  $L^{A}(\mathbb{R})$, the following facts hold:\\
{\it i)} If  $u_{{m}}\rightarrow u$ in $L^{A}(\mathbb{R})$, then $A(\left|u_{{m}}\right|)\rightarrow A(\left|u\right|)$ in $L^{1}(\mathbb{R})$ as   $n\rightarrow \infty$.\\
{\it ii)} Let  $u\in L^{A}(\mathbb{R})$. If  $u_{m}(x)\rightarrow u(x)$ $a.e.$  $x\in\mathbb{R}$ and if
\begin{equation*}
\lim_{n \to \infty}\int_{\mathbb{R}}A\left(\left|u_{m}(x)\right|\right)dx=\int_{\mathbb{R}}A\left(\left|u(x)\right|\right)dx,
\end{equation*}
then $u_{{m}}\rightarrow u$ in $L^{A}(\mathbb{R})$ as   $n\rightarrow \infty$.\\
{\it iii)} For any $u\in L^{A}(\mathbb{R})$, we have
\begin{equation}\label{DA1}
{\rm min} \left\{\left\|u\right\|_{L^{A}},\left\|u\right\|^{2}_{L^{A}}\right\}\leq  \int_{\mathbb{R}}A\left(\left|u(x)\right|\right)dx\leq {\rm max} \left\{\left\|u\right\|_{L^{A}},\left\|u\right\|^{2}_{L^{A}}\right\}.
\end{equation}
\end{proposition}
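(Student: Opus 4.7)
The plan rests on two structural ingredients that $A$ is known to satisfy: the $\Delta_{2}$-regularity, giving a constant $C>0$ with $A(2s)\le C\,A(s)$ for all $s\ge 0$, and the convexity with $A(0)=0$, which yields $A(\lambda s)\le \lambda A(s)$ for $0\le\lambda\le 1$ (equivalently, $A(s/k)\le A(s)/k$ for $k\ge 1$). Together with the Luxemburg characterisation
\[
\|u\|_{L^{A}}=\inf\Bigl\{k>0:\int_{\mathbb{R}}A(|u|/k)\,dx\le 1\Bigr\},
\]
these suffice to handle all three items.

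For (iii), I would first observe that, for $u\not\equiv 0$, the map $k\mapsto\int_{\mathbb{R}}A(|u|/k)\,dx$ is continuous and strictly decreasing on $(0,\infty)$, tending to $0$ as $k\to\infty$ and to $\infty$ as $k\to 0^{+}$; $\Delta_{2}$-regularity then gives the attainment $\int A(|u|/\|u\|_{L^{A}})\,dx=1$. Setting $k=\|u\|_{L^{A}}$ and splitting into the cases $k\le 1$ and $k\ge 1$, the convexity inequalities above translate this identity into $\int A(|u|)\,dx\ge k^{2}$ when $k\le 1$ and $\int A(|u|)\,dx\ge k$ when $k\ge 1$; the symmetric use of the same inequalities produces the corresponding upper bounds. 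This yields the $\min/\max$ sandwich \eqref{DA1}.

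For (i), I would reduce to a subsequence argument: from any subsequence of $\{u_{m}\}$, extract a further subsequence $u_{m_{j}}\to u$ almost everywhere, so $A(|u_{m_{j}}|)\to A(|u|)$ pointwise a.e.\ by continuity of $A$. Combining $|u_{m_{j}}|\le|u_{m_{j}}-u|+|u|$ with the $\Delta_{2}$-type inequality $A(s+t)\le C'(A(s)+A(t))$ (obtained by iterating $A(2s)\le CA(s)$ and using monotonicity) provides the majorant $A(|u_{m_{j}}|)\le C'\bigl(A(|u_{m_{j}}-u|)+A(|u|)\bigr)$. Because norm convergence in $L^{A}$ forces, via (iii), the modular convergence $\int A(|u_{m_{j}}-u|)\,dx\to 0$, Vitali's generalised dominated convergence theorem yields $A(|u_{m_{j}}|)\to A(|u|)$ in $L^{1}(\mathbb{R})$. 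Since every subsequence has such a further subsequence converging to the same limit, the full sequence converges.

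For (ii), I would apply the Brezis--Lieb lemma to the nonnegative sequence $A(|u_{m}|)$: the pointwise a.e.\ convergence together with the assumed convergence of modulars $\int A(|u_{m}|)\,dx\to\int A(|u|)\,dx$ yield $\int A(|u_{m}-u|)\,dx\to 0$, and the lower bound in (iii) then forces $\|u_{m}-u\|_{L^{A}}\to 0$. The main obstacle throughout is the sharp use of $\Delta_{2}$-regularity: without it, modular convergence and norm convergence in Orlicz spaces are not equivalent, so each implication has to be established first at the level of the modular and only then transferred to the Luxemburg norm via part (iii). A minor technicality is ensuring attainment of the infimum in the definition of $\|\cdot\|_{L^{A}}$ in (iii), but this is again a direct consequence of $\Delta_{2}$-regularity plus dominated convergence in $k$.
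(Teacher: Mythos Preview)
The paper itself does not prove this proposition; it simply refers to \cite[Lemma 2.1]{CL}. Your sketch is essentially the standard Orlicz--space argument and, for parts (i) and (ii), it is correct: the $\Delta_2$ bound $A(s+t)\le C'(A(s)+A(t))$ together with a.e.\ subsequences and generalised dominated convergence handles (i), and Br\'ezis--Lieb (applied with $j=A$, which is convex with $A(0)=0$, exactly as the paper does in Lemma~\ref{L4}) gives the modular convergence in (ii), after which the lower bound of (iii) closes the loop.

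There is, however, a genuine gap in your argument for (iii). Convexity with $A(0)=0$ yields only the inequality $A(\lambda s)\le \lambda A(s)$ for $0\le\lambda\le1$; from $\int A(|u|/k)\,dx=1$ this gives the bounds $\int A(|u|)\le k$ when $k\le1$ and $\int A(|u|)\ge k$ when $k\ge1$, i.e.\ the \emph{linear} half of \eqref{DA1}. The \emph{quadratic} half requires the reverse-type inequality
\[
A(\lambda s)\ \ge\ \lambda^{2}A(s)\qquad(0\le\lambda\le1),
\]
equivalently that $s\mapsto A(s)/s^{2}$ is nonincreasing (or $sA'(s)\le 2A(s)$). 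This is \emph{not} a consequence of convexity and $\Delta_{2}$ alone; it is a specific feature of the function $A$ defined in \eqref{IFD}, and has to be checked directly. For $0\le s\le e^{-3}$ one has $sA'(s)=2A(s)-2s^{2}\le 2A(s)$, and for $s\ge e^{-3}$ one computes $2A(s)-sA'(s)=4e^{-3}s-2e^{-6}\ge 0$; so the inequality does hold and the bound follows, but you must invoke this extra property rather than ``the convexity inequalities above''. Once this is added, your proof of (iii) is complete and the rest of your plan goes through.
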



\medskip
Received July 2016; revised February 2017.
\medskip

\end{document}